\documentclass[11pt,times,letter]{article}
\usepackage[utf8]{inputenc}
\usepackage{newunicodechar}
\newunicodechar{，}{,}
\usepackage{amsmath, amsfonts,amsthm}
\usepackage{dsfont}
\usepackage{bbm}

\usepackage[english]{babel}
\usepackage[normalem]{ulem}

\usepackage{latexsym,amssymb,graphicx}
\usepackage{verbatim}
\usepackage{mathrsfs}
\usepackage{epsfig}

\usepackage{xcolor}
\usepackage{tikz}
\usepackage{pgfplots}
\usepgfplotslibrary{fillbetween}

\usepackage{url}
\usepackage{bm}
\usepackage[numbers]{natbib}

\usepackage[colorlinks,linkcolor=blue,citecolor=blue,anchorcolor=blue]{hyperref}

\newtheorem{theorem}{Theorem}[section]
\newtheorem{corollary}[theorem]{Corollary}
\newtheorem{lemma}[theorem]{Lemma}
\newtheorem{proposition}[theorem]{Proposition}

\newtheorem{assumption}[theorem]{Assumption}
\newtheorem{definition}[theorem]{Definition}
\newtheorem{remark}[theorem]{Remark}
\newtheorem{example}[theorem]{Example}

\numberwithin{equation}{section}

\definecolor{Green}{rgb}{0.0, 0.4, 0.0}

\definecolor{Gray}{rgb}{0.7421875,0.7421875,0.7421875}

\definecolor{mycolor1}{rgb}{0.00000,0.44700,0.74100}%
\definecolor{mycolor2}{rgb}{0.85000,0.32500,0.09800}%
\definecolor{mycolor3}{rgb}{0.92900,0.69400,0.12500}%
\definecolor{mycolor4}{rgb}{0.49400,0.18400,0.55600}%
\definecolor{mycolor5}{rgb}{0.46600,0.67400,0.18800}%
\definecolor{mycolor6}{rgb}{0.00000,1.00000,1.00000}%

\allowdisplaybreaks[4]
\pgfplotsset{compat=newest}

\def\beq{\begin{eqnarray}}
\def\eeq{\end{eqnarray}}
\def\be*{\begin{eqnarray*}}
\def\ee*{\end{eqnarray*}}

\def \E{\mathbb{E}}
\def \F{\mathbb{F}}

\def \N{\mathbb{N}}

\def \P{\mathbb{P}}
\def \Q{\mathbb{Q}}
\def \R{\mathbb{R}}

\def \T{\mathbb{T}}

\def\Fc{{\cal F}}

\def\Jc{{\cal J}}

\def\Nc{{\cal N}}

\def\Rc{{\cal R}}

\def \d {\mbox{d}}
\def \eps {\varepsilon}

\def \r {\rho}

\def\E{\mathbb{E}}

\def\P{\mathbb{P}}

\def\TT{\mathcal{T}}
\def\R{\mathbb{R}}

\def\Fc{\mathcal{F}}

\def\N{\mathbb{N}}
\def\d{\mathrm{d}}

\def\namedlabel#1#2{\begingroup
	#2%
	\def\@currentlabel{#2}%
	\phantomsection\label{#1}\endgroup
}

\def \1 {{\mathbf 1}}

\title{\Large \bf
Distribution-constrained optimal multiple stopping: the Root-type solution
}
\date{\vspace{-5ex}}

\author{Shuoqing Deng
\thanks{
{\small The Hong Kong University of Science and Technology, Department of Mathematics (masdeng@ust.hk). S. Deng is supported by the Hong Kong University of Science and Technology Start-up Grant No. R9826 and Hong Kong RGC Early Career Scheme (ECS) Grant No. 26307125. }}%
\and
Daxin Huang
\thanks{
{\small
The Hong Kong University of Science and Technology, Department of Mathematics (dhuangax@connect.ust.hk).
}}
}

\begin{document}


\maketitle

\begin{abstract}

We consider the distribution-constrained optimal stopping problem introduced by Bayraktar and Miller \cite{BM2019} (Mathematical Finance, 2019) and Beiglb\"ock et al. \cite{BEES2018} (PTRF, 2018).
Motivated by the multi-marginal Skorokhod embedding problem (SEP) and applications in financial mathematics, we generalize the \textit{Root-type} solution to the multi-marginal case. The key difficulty is that the associated stopping barriers need not be ordered, so the problem in general cannot be reduced to a sequence of independent one-marginal problems. First, we give a probabilistic characterization in terms of sequential optimal stopping for an auxiliary backward process, in the same spirit as \cite{COT}.  Then, we prove the optimality of this construction by martingale methods for a general class of reward functions, including multi-marginal versions of all examples in \cite{BEES2018} as the special cases.

\vspace{0.6 cm}

\noindent{\textbf{Mathematics Subject Classification (2020)}: 60G42, 49N05.}

\vspace{0.2 cm}

\noindent{\textbf{Keywords}: distribution-constrained optimal stopping, optimal transport, Shiryaev's problem.}

\end{abstract}

\section{Introduction}

Given a probability measure $\mu$ on $\R_+$ with finite first moment, the \textit{distribution-constrained optimal stopping problem} consists of finding a stopping time $\tau$ for Brownian motion $B$ such that $\tau \sim \mu$ and $\tau$ maximizes or minimizes the objective
$$
\E[c(B_{\cdot \wedge \tau}, \tau)],
$$
for some measurable reward function $c$.

This problem was first proposed by Bayraktar and Miller \cite{BM2019} in the atomic case. They transformed the problem into a state-constrained stochastic control problem with respect to an auxiliary process, and used the dynamic programming principle (DPP) to solve it. 
K\"allblad \cite{K2022} then extended it to a more general setting with a nonatomic distribution $\mu$. The key idea is to reformulate the problem using a measure-valued martingale (MVM), so that the original terminal condition $\tau \sim \mu$ becomes an initial condition in the lifted formulation and the DPP is restored.

Our paper follows the approach to distribution-constrained optimal stopping developed by Beiglb\"ock et al. \cite{BEES2018}, which was established around the same time as \cite{BM2019}. More precisely, they borrowed ideas from the optimal Skorokhod embedding problem (SEP) \cite{BCH2017} and characterized the geometric properties of the optimal stopping time $\tau$. As in the SEP, different cost functions lead to solutions given by the first hitting times of different Brownian functionals to corresponding boundaries.

More specifically, in this paper we focus on a special class of reward functions for which the solution to the optimal stopping problem is of the form
$$
\tau:=\inf\{t>0:B_t\le b(t)\}.
$$
The function $b$ can be associated with a barrier $\Rc:=\{(t,x):x \leq b(t)\}$. Throughout the paper, we call it the \textit{Root-type solution} to the distribution-constrained optimal stopping problem, since its barrier shares similarities with the Root barrier \cite{Root1969} for the SEP. Given a centred probability measure $\mu$ on $\R$ with finite first moment, the SEP consists of finding $\tau$ such that $B_{\tau} \sim \mu$ and $(B_{t \wedge \tau})_{t \geq 0}$ is uniformly integrable. The Root solution is associated with a barrier $\Rc^{SEP}$ such that if $(t,x) \in \Rc^{SEP}$, then $(s,x) \in \Rc^{SEP}$ for all $s>t$. For the Root-type solution to constrained stopping, if $(t,x) \in \Rc$, then $(t,y) \in \Rc$ for all $y<x$.

In \cite{BEES2018}, the authors characterized the Root-type solution as the optimizer of some cost functions. They also studied the maximum-type cost function $c=B_t^*$, where $B_t^*:=\max_{0\leq s\leq t}B_s$, and characterized the solution by finding $\tau\sim\mu$, with
$$
\tau:=\inf\{t\geq0:B_t-B_t^*\leq b(t)\}.
$$
The dependence of the solution on the cost function in distribution-constrained stopping is a feature shared with the optimal SEP: the Root solution \cite{Root1969}, the Rost solution \cite{Rost1976}, the Az{\'e}ma--Yor solution \cite{AY1979}, and the Perkins/Hobson--Pedersen solution \cite{P1985, HP2002} solve different optimal SEP problems.

The distribution-constrained optimal stopping problem has connections to many important and interesting problems in mathematical finance, probability, and stochastic control. When first introduced in \cite{BM2019}, Bayraktar and Miller applied the problem to study the model-independent super-replication of a volatility outlook. In \cite{BEES2018}, the problem is related to \textit{Shiryaev's problem}, or the inverse first-passage problem. Anulova \cite{Anulova1980} provided an early barrier-type solution to Shiryaev's problem. Motivated by the work of Avellaneda and Zhu \cite{AZ2001} on credit-risk modeling, Chen et al.~\cite{CCCS} gave a rigorous characterization of the barrier using a variational inequality. Ekstr\"om and Janson \cite{EJ} further provided an optimal-stopping representation and an integral-equation characterization.

More recently, the distribution-constrained stopping problem was related to the contract exit problem \cite{HTZ2023}. It can also be viewed as an optimal stopping adaptation of the mean-field planning problem of P. L. Lions; see \cite{Lions, RTTY}. In the spirit of \cite{XZ2013}, one can further relate it to some time-inconsistent stopping problems where the cost functions depend on the law of the stopping time.

Multiple stopping problems arise naturally in applications. For example, in \cite{BM2019}, extending the problem to multi-period trading requires multiple distributional constraints related to market-implied stock prices at multiple times. In \cite{HTZ2023}, if one considers the contract exit problem for multiple agents, it is also necessary to extend the formulation into a multiple-stopping context. Similar generalizations have already appeared in the SEP literature.

In this paper, we study a multiple-stopping extension of the distribution-constrained
stopping problem. Roughly speaking, we study
$$
\sup_{\tau_1\leq\ldots\leq\tau_n}
\E\left[c(B_{\tau_1},\ldots,B_{\tau_n},\tau_1,\ldots,\tau_n)\right],
\quad \text{subject to }\tau_i\sim\mu_i,\quad i=1,\ldots,n.
$$

The main difficulty with such a generalization is that the multiple stopping boundaries need not be \textit{ordered} by inclusion: for some $i<n$, the region associated with the $i$-th marginal may fail to contain that associated with the $(i+1)$-st. Consequently, the Brownian motion may enter a later stopping region before an earlier one. Hence the $n$-stopping problem cannot be obtained by simply combining $n$ one-stopping problems. In the context of the SEP, the difficulties of multiple stopping problems are well known, and solutions have been established in several settings: \cite{COT} for the multi-marginal Root solution and \cite{BHR, HLST2016, OS2017} for the multi-marginal Az{\'e}ma--Yor solution.
In the general setting, \cite{BCH2020} established the monotonicity principle for multi-marginal SEP problems by generalizing the concept of stop-go pairs through color swaps and multi-color swaps.

Our first main contribution is to provide a probabilistic characterization of the Root-type solution to the distribution-constrained multiple-stopping problem as a sequence of optimal stopping problems of an auxiliary backward process. This construction is in the same spirit as the multi-marginal Root construction of Cox, Ob{\l}{\'o}j and Touzi \cite{COT} in the SEP context; see also \cite{EJ}. As the multiple boundaries are not necessarily ordered, each cost function in the sequence of optimal stopping problems involves the value functions of the previous problems. We remark that similar techniques may also be useful for extending other types of distribution-constrained stopping to a multiple-stopping framework.

Our second main contribution is a direct martingale proof of the optimality of this construction. The argument is related to martingale inequality approach for optimal Skorokhod embeddings \cite{CW,COT,GHLT,HLST2016}, but the comparison is made in expectation sense rather than through a pathwise inequality.
Financially speaking, pathwise martingale inequalities for the SEP consist of the explicit construction of a semi-static superhedging strategy. In the distribution-constrained stopping context, optimality was previously proved using the monotonicity principle \cite{BEES2018}. Obtaining an analogous pathwise inequality in the present setting would require constructing explicit dual optimizers for the dual problem discussed in \cite[Remark~3.8]{BEES2018}.   In this work, we extend the optimality results in \cite{BEES2018} to the multiple stopping context and allow more general cost functions. Our proof compares after taking expectations and avoids the additional free-boundary regularity required by such a pathwise construction.

\textbf{Organization of the paper.} In Section~\ref{sec:main}, we formulate the multiple-stopping problem and state the main results. In Section~\ref{sec:proof_construction}, we prove the probabilistic characterization in terms of sequential optimal stopping for a backward process. In Section~\ref{sec:opti}, we prove the optimality of the construction using the martingale argument.

\subsection{Notations}

Throughout this paper, we work on a filtered probability space
$(\Omega,\Fc,(\Fc_t)_{t\ge0},\P)$ satisfying the usual hypotheses.
For $x\in\R$, we write $\P^x$ and $\E^x$ for probability and
expectation when the Brownian motion under consideration starts from
$x$. We take $\P=\P^0$ and write $\E=\E^0$.

Let $X=(X_s)_{s\ge0}$ be a standard Brownian motion with $X_0=0$. When a backward optimal stopping problem is considered with horizon $t\ge 0$ and state $x\in\R$, we use $Y=(Y_s)_{0\le s\le t}$ to denote a Brownian motion started from $x$, independent of $X$. The parameter $s$ represents elapsed time, while $t-s$ is the corresponding calendar time. Thus, $Y$ evolves forward as an ordinary Brownian motion; the term ``backward'' refers only to the decreasing time argument $t-s$. Whenever $X$ and $Y$ are used in the same argument, they are taken to be independent unless stated otherwise.

Let
$$
    \F^X=(\Fc_s^X)_{s\ge0}
    \quad\text{and}\quad
    \F^Y=(\Fc_s^Y)_{s\ge0}
$$
denote the natural filtrations generated by $X$ and $Y$, respectively,
augmented in the usual way. We write $\TT$ for the set of
$[0,\infty]$-valued $\F^X$-stopping times and, for $t\ge0$,
$\TT^t$ for the set of $\F^Y$-stopping times taking
values in $[0,t]$.

For a probability measure $\mu$ on $\R_+$, define its survival
function by
$$
    g^\mu(t):=\mu((t,\infty)),\qquad t\ge0.
$$
For probability measures $\mu$ and $\nu$ on $\R_+$, we define the stochastic order by
$$
    \mu\le_{\mathrm{st}}\nu
    \quad\Longleftrightarrow\quad
    g^\mu(t)\le g^\nu(t)
    \quad\text{for every }t\ge0.
$$

Finally, we use the conventions:
$$
    \inf\varnothing=+\infty,\qquad
    \sup\varnothing=-\infty,\qquad
    \sup\R=+\infty.
$$

\section{Distribution-constrained optimal multiple stopping: Root-type solution} \label{sec:main}

\subsection{Formulation of distribution-constrained optimal multiple stopping}

Throughout this paper, $\boldsymbol{\mu}_n:=(\mu_1,\ldots,\mu_n)$ represents a vector of $n$ probability measures on $\R_+$. For $k=1,\ldots,n$ and any $\mu_k$-integrable function $\phi:\R_+\to\R$, we write
$$
\mu_k(\phi):= \int_{\R_+} \phi(x) \mu_k(dx).
$$

Set $g^i:=g^{\mu_i}$. We are interested in the following $n$-marginal distribution-constrained stopping problem. Let $(\mu_i)_{1\leq i\leq n}$ be increasing in stochastic order, with each measure having a finite first moment, and define
$$\TT(\boldsymbol{\mu}_n):= \{(\tau_1,\ldots,\tau_n): \tau_i \in \TT, \, \tau_i \sim \mu_i, \, i=1,\ldots,n,\,\tau_1 \leq \cdots \leq \tau_n \, \P\text{-a.s.}\}.$$
The $n$-marginal distribution-constrained stopping problem is defined as:
$$
\mbox{ConsStop}(\boldsymbol{\mu}_n): \mbox{ find } \tau \in \TT(\boldsymbol{\mu}_n).
$$
Furthermore, we can consider the optimization problem:
$$
\sup_{\tau \in \TT(\boldsymbol{\mu}_n)} \E\left[ \sum_{i=1}^{n} c_i ( X_{\tau_i}, \tau_i)\right],
$$
for some measurable functions $(c_i)_{1 \leq i \leq n}$ such that the expectation is well-defined.

\begin{remark}

1. Here we focus on separable cost functions, as in \cite{COT, HLST2016}. In general, it is also possible to consider non-separable cost functions, but the associated optimality result will be more complicated.

2. The non-trivial part of the multiple-stopping problem is that the stopping boundaries in the $n$-stage problem need not be ordered; see also \cite{COT}. See Figure \ref{fig:non_order} for an illustration.

\end{remark}

\begin{figure}
    \centering
    \includegraphics[width=0.8\linewidth]{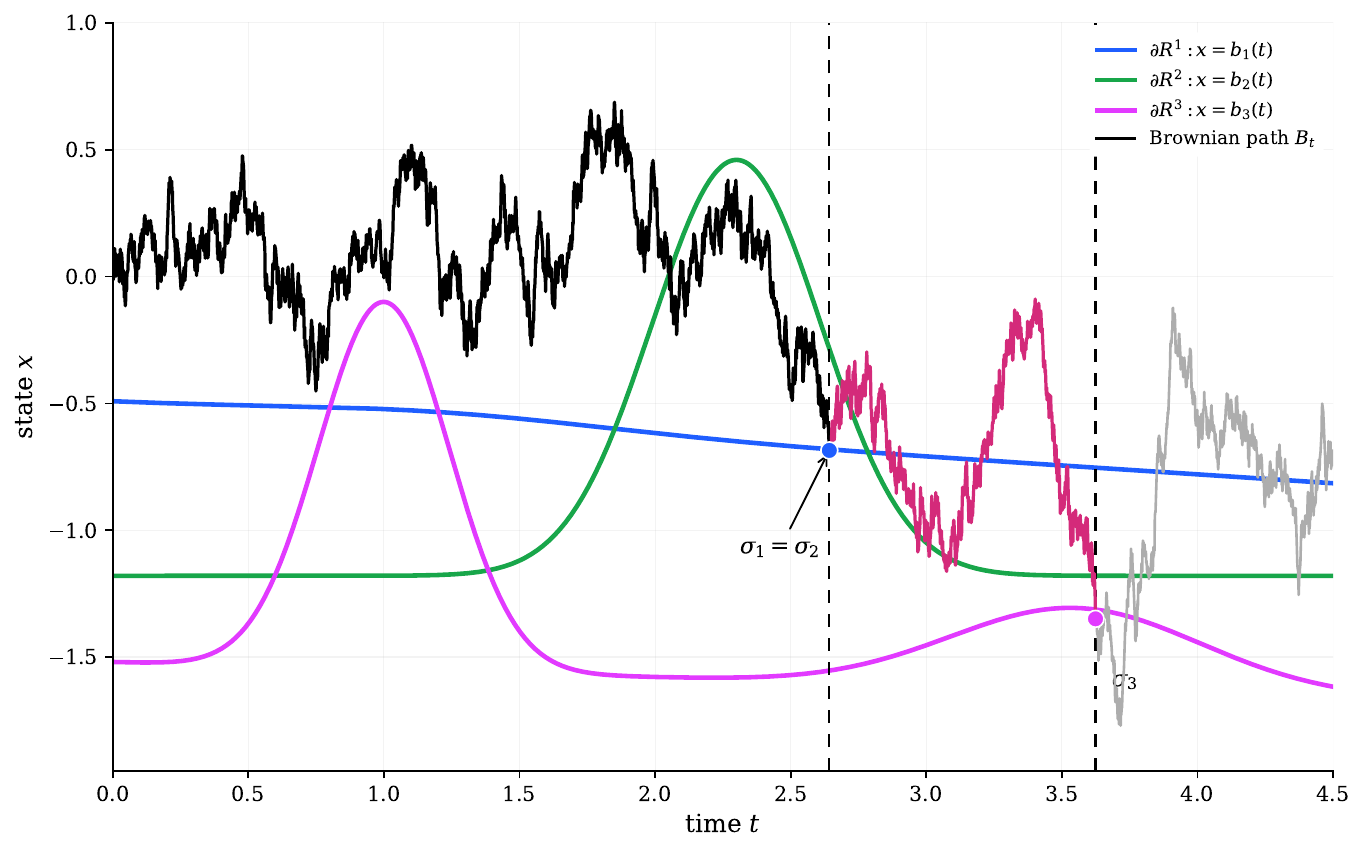}
    \caption{An example of three distribution-constrained stopping boundaries which are not ordered. In the realization above, the sample path enters $\Rc^2$ before $\Rc^1$, and $\Rc^3$ before $\Rc^2$, but these early entries do not trigger the corresponding stopping events. The correct stopping times should satisfy $\sigma_1=\sigma_2<\sigma_3$.}\label{fig:non_order}
\end{figure}

\begin{remark}[On the order condition]

The stochastic order seems most appropriate for the multi-marginal constrained stopping problem. On the one hand, since $\tau_1 \leq \ldots \leq \tau_n$, the measures in $\boldsymbol{\mu}_n$ must be in stochastic order. On the other hand, the stochastic order does not force the boundaries to be ordered. This is precisely why the problem need not reduce to separate one-stopping problems. See Example \ref{exam:sto_order}.

We can also consider imposing stronger orders. For example, the hazard-rate order is defined by comparing the hazard-rate functions: for an absolutely continuous law, the hazard rate is defined as $h_{\mu}(t):=\frac{f_{\mu}(t)}{g^{\mu}(t)}$ on $\{g^\mu>0\}$, where $f_{\mu}$ and $g^{\mu}$ are, respectively, the p.d.f. and survival function of $\mu$. Writing $\le_{\mathrm{hr}}$ for the hazard-rate order, we have
$$
\mu_1 \le_{\mathrm{hr}} \mu_2 \Longleftrightarrow h_{\mu_1} \geq h_{\mu_2}.
$$
It is well known that the hazard-rate order is stronger than the stochastic order: $\mu_1\le_{\mathrm{hr}}\mu_2\Rightarrow\mu_1\le_{\mathrm{st}}\mu_2$. By adapting the arguments of \cite{KK2022}, one can show that if $\mu_1\le_{\mathrm{hr}}\mu_2$, then the multiple boundaries are in order. In this case, the multiple-stopping problem can be reduced to separate one-stopping problems and is therefore not of interest here. The role of the hazard-rate order in distribution-constrained multiple stopping is similar to that of the mean residual order in the multi-marginal Az{\'e}ma--Yor solution to the SEP; see \cite{MadanYor}.

\end{remark}

\begin{example}[Stochastic order can result in non-ordered boundaries]
 \label{exam:sto_order}
    Let $X$ be a standard Brownian motion and let $\Phi$ denote the standard normal distribution function. For $t>0$, define
    $g^1(t):=2\Phi\left(\frac{1}{\sqrt t}\right)-1$, which is the survival function of the first hitting time of the horizontal line $x=-1$.
    We further define
    \begin{align*}
        g^2(t) :=
        \begin{cases}
            g^1(t), & 0<t\le 1,\\
            g^1(t)+\displaystyle\int_1^t g^1(t-s)\,\mathrm d(1-g^1)(s), & t>1.
        \end{cases}
    \end{align*}
    Let $\beta_1,\beta_2$ be the probability laws with survival functions $g^1,g^2$, respectively. It is clear that
    $g^1(t)\le g^2(t)$, and consequently $\beta_1 \le_{\mathrm{st}} \beta_2$.

    We can now consider the following two stopping regions:
    $$\Rc^1 := \{(t,x) \in [0, \infty) \times \R: x \leq -1\} \quad\text{and} \quad \Rc^2 := \left([0,1]\times \R\right)\,\cup\,\left((1,\infty)\times(-\infty,-2]\right).$$
    Define the stopping times associated with these two regions by
    $$\sigma_1 := \inf\{t > 0: (t, X_t) \in \Rc^1\} \quad\text{and} \quad \sigma_2 := \inf\{t\ge \sigma_1:(t,X_t)\in \Rc^2\}.$$
    It is not difficult to verify that $\sigma_1 \sim \beta_1$ and $\sigma_2 \sim \beta_2$. However, it is clear that the two stopping regions are not ordered. See Figure \ref{fig:sto_order} for more details.

\end{example}

\begin{figure}
    \centering
    \includegraphics[width=0.8\linewidth]{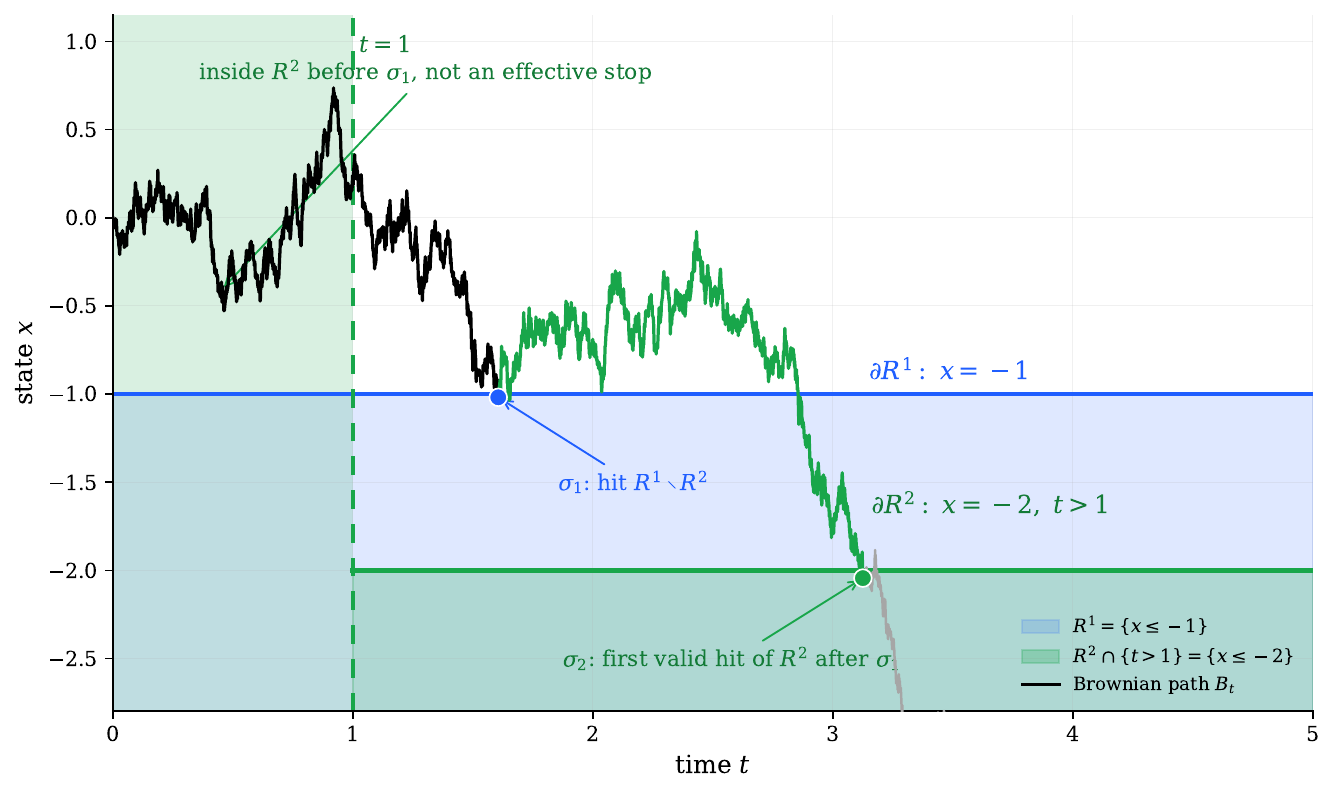}
    \caption{A realization of the non-ordered stopping regions in Example~\ref{exam:sto_order}. The Brownian path may enter $\Rc^2$ before $\Rc^1$, but this entry does not provide an effective stopping time.}\label{fig:sto_order}
\end{figure}

\subsection{Probabilistic characterization of the multiple-stopping boundary}\label{sec:proba}

In this section, we provide a probabilistic characterization of $\mbox{ConsStop}(\boldsymbol{\mu}_n)$. For this purpose, we first define a sequence of barriers and the corresponding stopping times. The construction is in the same spirit as \cite{COT}, but needs to be adapted to the current context.

We start by defining the first barrier. Consider an optimal stopping problem with respect to the backward process $Y$, and denote its value function by $u^1$.
$$u^1(t,x):= \inf_{\tau \in \TT^t}\E^x\left[\mathbf{1}_{\{Y_\tau < 0\}} + \left(g^1(t-\tau) - \mathbf{1}_{\{Y_\tau < 0\}}\right)\mathbf{1}_{\{\tau < t\}}\right].$$
We further define the corresponding stopping region by
$$\Rc^1 := \{(t,x) \in (0,\infty) \times \R: u^1(t,x) = g^1(t)\},$$
and define the first stopping time by $\sigma_1 := \inf\{t > 0: (t, X_t) \in \Rc^1\}$.

For $t>0$, the symmetry of Brownian motion and the change $x \mapsto -x$ identify $u^1(t,x)$ with the value function in \cite[equation~(4)]{EJ}. The only discrepancy is the terminal convention $Y_t < 0$ here versus $Y_t \leq 0$ after reflection, which does not affect the value. Under the same reflection, the barrier constructed in \cite[Theorem~2.3]{EJ} is precisely $\Rc^1$. Hence $\Rc^1$ is a closed lower barrier and $\sigma_1\sim\mu_1$, while \cite[Theorem~2.6]{EJ} gives the value identity for $t>0$. At $t=0$, the value identity follows directly from $X_0=0$ and $\P(\sigma_1=0)=\mu_1(\{0\})=0$. Consequently,
$$
\sigma_1\sim\mu_1,
\qquad
u^1(t,x)=\P(\sigma_1>t, X_t>x),
\qquad (t,x)\in[0,\infty)\times\R.
$$

For $k\geq2$, set $w^k(t):=g^k(t)-g^{k-1}(t)$ and define
$$u^k(t,x):=\inf_{\tau\in\TT^t}\E^x\left[u^{k-1}(t-\tau,Y_\tau)+w^k(t-\tau)\mathbf{1}_{\{\tau<t\}}\right],\quad t\geq0,\quad x\in\R.$$
The corresponding stopping region and stopping time are naturally defined by
$$\Rc^k:=\{(t,x)\in[0,\infty)\times\R:u^k(t,x)=u^{k-1}(t,x)+w^k(t)\},$$
and
$$
    \sigma_k := \inf\left\{t \ge \sigma_{k-1}: (t, X_t) \in \Rc^k\right\}, \quad k=2,\ldots,n.
$$
We now state our first main result, a probabilistic characterization of the distribution-constrained multiple stopping problem.

\begin{theorem}
    \label{thm::goal}
    Let $\mu_1, \ldots , \mu_n$ be atomless probability measures on $\R_+$, each with finite first moment, such that $\mu_1 \le_{\mathrm{st}} \cdots \le_{\mathrm{st}} \mu_n$.
    Then
$$
(\sigma_1,\ldots,\sigma_n)\, \quad\mbox{solves}\, \quad\mbox{ConsStop}(\boldsymbol{\mu}_n).
$$
Moreover, for $k=1,\ldots,n$,
$$
u^k(t,x)=\P(\sigma_k>t,\;X_t>x),
\qquad (t,x)\in [0,\infty)\times\R.
$$
\end{theorem}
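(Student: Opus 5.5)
The proof goes by induction on $k$. The case $k=1$ is exactly the statement derived from \cite{EJ} just before the theorem: $\sigma_1\sim\mu_1$ and $u^1(t,x)=\P(\sigma_1>t,X_t>x)$. For the inductive step I assume that $\sigma_{k-1}\sim\mu_{k-1}$ and $u^{k-1}(t,x)=\P(\sigma_{k-1}>t,X_t>x)$. Set $v(t,x):=\P(\sigma_k>t,X_t>x)$. The goal is to show $v=u^k$. Once that holds, letting $x\to-\infty$ gives
$$
\P(\sigma_k>t)=\lim_{x\to-\infty}u^k(t,x)=g^k(t).
$$
This limit is computed as follows. Since $u^{k-1}(\cdot,x)\to g^{k-1}$ as $x\to-\infty$, the reward converges uniformly in $x$ to $g^{k-1}(t-\tau)+w^k(t-\tau)\1_{\{\tau<t\}}$. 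Because $g^k$ is nonincreasing, the infimum of this limiting reward is attained at $\tau=0$ and equals $g^k(t)$; here one uses the stochastic order, so that $w^k\ge0$, together with the atomlessness to handle $\tau=t$. Hence $\sigma_k\sim\mu_k$. The ordering $\sigma_{k-1}\le\sigma_k$ holds by construction, so $(\sigma_1,\ldots,\sigma_n)$ solves ConsStop$(\boldsymbol{\mu}_n)$.

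The identification $v=u^k$ follows the verification scheme of \cite{COT} and \cite{EJ}. I will work with the obstacle $\psi:=u^{k-1}+w^k$ and the terminal value $u^{k-1}(0,\cdot)$. First I establish regularity and structure: $u^k$ is continuous, $u^{k-1}\le u^k\le\psi$, and $u^k$ is nonincreasing in $x$. The last property makes $\Rc^k$ a closed lower barrier in $x$, which I will check via a coupling of $Y$ started from different points. Moreover, $u^k$ solves the backward heat equation $\partial_t u=\tfrac12\partial_{xx}u$ on the continuation region $\{u^k<\psi\}$, and $u^k(s,Y_s)$ stopped at the entry to $\Rc^k$ is a martingale in the reversed time.

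Second, I compute how $v$ evolves. Write
$$
v(t,x)=\P(\sigma_{k-1}>t,X_t>x)+\P(\sigma_{k-1}\le t<\sigma_k,X_t>x).
$$
The first term is $u^{k-1}$ by the induction hypothesis. The second term is a killed, Dirichlet-type quantity: the process $X$ is started, at rate $d\mu_{k-1}$ in time and with spatial law $\P(X_{\sigma_{k-1}}\in\cdot)$, and is killed upon entering $\Rc^k$. The key duality, the time-reversal identity used in \cite[Theorem 2.6]{EJ} and \cite{COT}, is
$$
\partial_x\P(\sigma>t,X_t\in dx)\text{-type densities satisfy the forward heat equation off the barrier and vanish on it.}
$$
Taking the $x$-tail integral, this shows that $v$ solves the forward heat equation $\partial_t v=\tfrac12\partial_{xx}v$ away from the barrier. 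On the barrier, $\partial_x$ of the killed part vanishes, which gives $v=\psi$ on $\Rc^k$ up to an additive function of $t$. That function is fixed by evaluating at $x\to-\infty$. The same reasoning yields $v\le\psi$ everywhere and $v\ge u^{k-1}$.

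Third, I compare $v$ with $u^k$. For an arbitrary $\tau\in\TT^t$, I evaluate $v(t-s,Y_s)$ along $Y$. A variant of Itô's formula applied to the space-time function $(s,y)\mapsto v(t-s,y)$ shows it is a submartingale; in distributional form this is the inequality $-\partial_t v+\tfrac12\partial_{xx}v\ge0$. Combined with $v\le\psi$, this gives $v(t,x)\le \E^x[\text{reward}(\tau)]$, hence $v\le u^k$. Conversely, taking $\tau$ to be the entry of $(t-s,Y_s)$ into $\Rc^k$ makes $v(t-s,Y_s)$ a martingale up to $\tau$, and at $\tau$ it equals the reward, so $v\ge u^k$. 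I expect this verification to be the main obstacle. It lacks smoothness because of the weak regularity of $v$ at the barrier and at $t=0$, and the barrier may be non-monotone in $t$. My first attempt would be to bypass the PDE entirely and prove both inequalities probabilistically, via the Markov-property decomposition at $\sigma_{k-1}$ combined with the independence of $X$ and the time-reversed $Y$, following the proof of \cite[Theorem 2.6]{EJ}. Mollification would be the fallback, with atomlessness of $\mu_k$ used to control the boundary terms at $\tau=t$.
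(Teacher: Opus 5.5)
\textbf{Verdict: there is a genuine gap.} Your induction frame is fine. So is the last step, deducing $\sigma_k\sim\mu_k$ from $v=u^k$ by letting $x\to-\infty$; the convergence there is uniform in $\tau$, not in $x$. The problem is the identification $v=u^k$ itself. It is the whole content of the inductive step (Theorem~\ref{thm::main} in the paper), it is not established, and the scheme you sketch for it is circular.

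For $t>0$, $v\le\psi$ says $\P(\sigma_{k-1}\le t<\sigma_k,\,X_t>x)\le w^k(t)$ for all $x$. Letting $x\to-\infty$ and using $\sigma_{k-1}\sim\mu_{k-1}$ and $\sigma_{k-1}\le\sigma_k$, this is exactly $\P(\sigma_k>t)\le g^k(t)$. Similarly, on a nonempty section $\Rc^k_t$ of a lower barrier one has $v-u^{k-1}=\P(\sigma_{k-1}\le t<\sigma_k)$, because $X_t$ lies above the section on that event. So ``$v=\psi$ on $\Rc^k$'' is exactly $\P(\sigma_k>t)=g^k(t)$. Fixing your additive function of $t$ at $x\to-\infty$ therefore presupposes the law of $\sigma_k$, which you only derive afterwards from $v=u^k$.

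Once $\P(\sigma_k>t)\le g^k(t)$ is known, $v\le u^k$ is easy: apply Lemma~\ref{lem::V-submartingale} to $\sigma_k$, as in the proof of \eqref{eq:sur-tail}. Nothing in your plan produces that bound, though. The reverse inequality also rests on an unproved martingale property of $v(t-s,Y_s)$ up to the entry of $(t-s,Y_s)$ into $\Rc^k$. In the PDE version it would instead need smooth fit and a uniqueness theorem for the killed forward equation in a domain bounded by a possibly non-monotone, discontinuous curve.

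There is also a smaller error. Monotonicity of $u^k(t,\cdot)$ does not make $\Rc^k=\{u^k-u^{k-1}=w^k\}$ a lower barrier. What you need is that the difference $u^k-u^{k-1}$ is non-increasing in $x$. A coupling of $Y$ does not give this, since both terms decrease. The paper obtains it only from the representation $u^\beta-v^\xi=\P(\sigma^\xi\le t<\sigma^\beta,\,X_t>x)$.

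What is missing is a mechanism that forces the hitting time of $\Rc^k$ to carry exactly mass $w^k$. The paper gets it by discretizing the target law, with $\xi=\mu_{k-1}$ and $\beta=\mu_k$. When $g^\beta$ is a step function on $I^\beta_\xi$, Lemma~\ref{lem:tc-grid-rounding} reduces the stopping problem to a finite recursion over the atoms. The switching identity then turns it into
\[
u^\beta(t_i,y)-v^\xi(t_i,y)=\min\bigl\{w^\beta(t_i),\,\P(\sigma^\xi\le t_i\le\sigma^\beta,\,X_{t_i}>y)\bigr\}.
\]
The second entry is continuous and non-increasing in $y$, and it tends to $w^\beta(t_i)+\beta(\{t_i\})>w^\beta(t_i)$ as $y\to-\infty$. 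So the barrier level $b_i$ is exactly the level at which the surviving mass equals $w^\beta(t_i)$. This is the non-circular source of $\sigma^\beta\sim\beta$.

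The general case then needs several further steps:
\begin{itemize}
\item a dyadic approximation $\beta^m$ with uniform stability of the value functions;
\item Hausdorff compactness of hypographs to extract a limiting barrier;
\item almost sure stability of the delayed hitting times, using the shift $X_t-\varepsilon t$ and the atomlessness of $\sigma^\xi$ to avoid the countably many right discontinuities of the limit;
\item a proof that the limiting hitting time equals $\sigma^\beta$, despite the filled sections where $w^\beta=0$.
\end{itemize}
The one-marginal argument of \cite{EJ} that you point to is itself of this discretize-and-approximate type, but your proposal contains none of these steps.
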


\textbf{An induction approach}

\vspace{3mm}

As in \cite{COT}, the rigorous proof of the above theorem is based on an induction. The output of each induction step is a stopping time $\sigma^{\xi}$ with law $\xi$, which then serves as the starting point for the subsequent definitions. Given a new law $\beta$, we construct a new stopping time $\sigma^{\beta}$. Repeating this procedure proves the main theorem.

Let $\xi \le_{\mathrm{st}}\beta$ and $w^\beta(t):= g^\beta(t) - g^\xi(t)$. Define
$$v^\xi(t,x) := \P\left(\sigma^\xi > t, X_t >x\right).$$
The optimal stopping problem of interest in our induction procedure is
$$u^\beta(t,x) := \inf_{\tau \in \TT^t}\E^x\left[v^\xi(t-\tau,Y_\tau) + w^\beta(t-\tau)\mathbf{1}_{\{\tau < t\}}\right].$$
We introduce the corresponding stopping region
$$\Rc^\beta:= \left\{(t,x): u^\beta(t,x) = v^\xi(t,x) + w^\beta(t)\right\},$$
and define
$$\sigma^{\beta}:= \inf\left\{t \geq \sigma^\xi: (t, X_t) \in \Rc^\beta\right\}, \quad v^\beta(t,x) := \P\left(\sigma^\beta > t, X_t > x\right).$$
The following theorem is the main result obtained from the induction procedure.
\begin{theorem}
\label{thm::main}
    Let $\xi$ and $\beta$ be atomless probability measures on $\R_+$, each with a finite first moment, and assume that $\xi\le_{\mathrm{st}}\beta$. Suppose further that, for some $k\geq1$, $\sigma^\xi=\sigma_k$ is the $k$-th stopping time generated by the preceding recursion and that \(\sigma^\xi\sim\xi\). Then $\Rc^\beta$ is a closed lower barrier. Moreover, $u^\beta=v^\beta$ and $\sigma^\beta\sim\beta$.
\end{theorem}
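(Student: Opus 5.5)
We follow the scheme of \cite{COT} and \cite{EJ}, working throughout with the value function $u^\beta$ and the function $v^\xi(t,x)=\P(\sigma^\xi>t,X_t>x)$. We first record the basic properties of $v^\xi$ that the later steps use. By the hypothesis $\sigma^\xi\sim\xi$ and the induction structure, $v^\xi$ is continuous, nonincreasing in $x$, and satisfies $v^\xi(t,-\infty)=g^\xi(t)$ and $v^\xi(t,+\infty)=0$. Moreover, $v^\xi$ solves, in the weak sense, the forward heat equation $\partial_t v=\tfrac12\partial_{xx}v$ away from the region where $\sigma^\xi$ stops, and is a supersolution there. Concretely, $\mu^\xi_t(dx):=-\partial_x v^\xi(t,dx)$ is the sub-probability law of $X_t$ on $\{\sigma^\xi>t\}$, and it solves the Fokker--Planck equation with killing on the barrier of $\sigma^\xi$.

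Second, we establish the analytic structure of $u^\beta$. Since $w^\beta\ge0$ is continuous (both laws are atomless), the reward is continuous, and standard optimal stopping theory gives three facts: $u^\beta$ is continuous, $u^\beta\le v^\xi+w^\beta$, and $u^\beta$ satisfies the obstacle problem $\min\{\partial_t u-\tfrac12\partial_{xx}u,\ v^\xi+w^\beta-u\}=0$ with $u^\beta(0,\cdot)=v^\xi(0,\cdot)$. The barrier property comes from monotonicity in $x$. The function $x\mapsto v^\xi(t,x)+w^\beta(t)-u^\beta(t,x)$ is nondecreasing in $x$. To see this, write $u^\beta(t,x)-v^\xi(t,x)$ via It\^o/Dynkin as $\inf_\tau\E^x[\int_0^\tau(\ldots)]$, using that $\partial_t v^\xi-\tfrac12\partial_{xx}v^\xi$ is a nonnegative measure supported on the $\sigma^\xi$ stopping region. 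Equivalently, compare $Y$ started at $x<y$ by the coupling $Y^y=Y^x+(y-x)$. This shows that if $(t,x)\in\Rc^\beta$ then $(t,y)\in\Rc^\beta$ for all $y<x$. Closedness of $\Rc^\beta$ then follows from continuity. We also check that for $x\to-\infty$ the point lies in the stopping region whenever $w^\beta(t)$ allows it, which is what identifies $\Rc^\beta$ as a lower barrier $\{x\le b^\beta(t)\}$.

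Third, and this is the main step, we identify $u^\beta=v^\beta$. Define $\tilde v(t,x):=\P(\sigma^\beta>t,X_t>x)$ and show that it solves the same variational inequality as $u^\beta$:
\begin{itemize}
\item On the continuation region $(\Rc^\beta)^c$, the process $X$ is not killed by $\sigma^\beta$ beyond what $\sigma^\xi$ already kills. Hence $\tilde v-v^\xi$ satisfies the heat equation there, and $u^\beta-v^\xi$ does too, since $u^\beta$ is harmonic and $v^\xi$ has identical source terms.
\item On $\Rc^\beta$, the paths that survive $\sigma^\xi$ and are at level $\le b^\beta(t)$ are stopped. Here we must show $\tilde v=v^\xi+w^\beta$, i.e.\ that $\tilde v$ touches the obstacle.
\end{itemize}
The natural route is the one in \cite{COT}. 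For fixed $(t,x)$ we run $Y$ backward and $X$ forward, and use a duality or time-reversal identity of the form
\[
\E^x\bigl[v^\xi(t-\tau,Y_\tau)\bigr]
=\P\bigl(\text{$X$ survives $\sigma^\xi$ up to $t-\tau$ and then the concatenated path ends above $x$}\bigr).
\]
Applying this with $\tau$ the hitting time of $\Rc^\beta$ by the backward process shows $u^\beta(t,x)=\P(\sigma^\beta>t,X_t>x)$. Comparing with $x\to-\infty$ gives $\P(\sigma^\beta>t)=\lim_{x\to-\infty}u^\beta(t,x)$. Since the barrier structure forces stopping at $-\infty$, this limit equals $g^\xi(t)+w^\beta(t)=g^\beta(t)$, so $\sigma^\beta\sim\beta$.

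We expect the main obstacle to be this third step. The difficulty is the verification on the interface: the non-ordering of the barriers means that paths enter $\Rc^\beta$ before $\sigma^\xi$ without stopping, so $\tilde v$ is not simply the heat flow killed at $\Rc^\beta$. We would handle this through the localized comparison above. We would first prove it for smooth, strictly ordered approximations of $g^\xi,g^\beta$, and then pass to the limit using continuity of the value functions in the marginals together with the atomless and first-moment hypotheses.
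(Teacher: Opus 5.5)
Your first two steps can be repaired, but the third step, which is the whole content of the theorem, is asserted rather than proved, and the verification route you sketch is circular.

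\textbf{Why the verification is circular.} For $\tilde v$ to solve the same obstacle problem as $u^\beta$, you need $\tilde v\le v^\xi+w^\beta$ everywhere, with equality on $\Rc^\beta$. On $\{\sigma^\xi\le t<\sigma^\beta\}$ one has $(t,X_t)\notin\Rc^\beta$. Since $\Rc^\beta$ is a lower barrier, this gives $\tilde v(t,x)-v^\xi(t,x)=\P(\sigma^\xi\le t<\sigma^\beta)$ for $(t,x)\in\Rc^\beta$. So your requirements say exactly that $\P(\sigma^\beta>t)\le g^\beta(t)$ for all $t$, with equality whenever $\Rc^\beta_t\neq\varnothing$. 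That presupposes the law statement you want to prove. Even granting it, you would still need a comparison principle for an obstacle problem with discontinuous initial datum $\mathbf 1_{\{x<0\}}$ and a merely continuous obstacle. Your first bullet is also inaccurate: $\tilde v-v^\xi$ is not caloric on $(\Rc^\beta)^c$. Formally it carries the source $\kappa^\xi_t((x,\infty))$, where $\kappa^\xi$ is the law of $(\sigma^\xi,X_{\sigma^\xi})$; only $\tilde v$ itself is caloric there.

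\textbf{The missing idea.} The duality identity does not break the circle. It rewrites only $\E^x[v^\xi(t-\tau,Y_\tau)]$ as a path probability. It says nothing about $\E^x[w^\beta(t-\tau)\mathbf 1_{\{\tau<t\}}]$, which is where the law constraint enters. Turning that term into the probability of paths with $\sigma^\xi\le t<\sigma^\beta$ is what is missing. The paper does this for atomic targets:
\begin{enumerate}
\item Lemma~\ref{lem:tc-grid-reduction} reduces to calendar stopping times in $\Lambda$.
\item The recursion \eqref{eq:tc-upper-tail-cap-direct}, with continuity in $y$, fixes $b_i$ so that $\P(\sigma^\xi\le t_i\le\sigma^\beta,X_{t_i}>b_i)=w^\beta(t_i)$ exactly.
\item Lemma~\ref{lem:tc-open-layer-contact} shows the barrier is empty between atoms where $w^\beta>0$.
\item Induction over the atoms gives \eqref{eq:tc-grid-active-upper-tail}.
\end{enumerate}
This also shows that your claim ``the barrier structure forces stopping at $-\infty$'' is false. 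Sections $\Rc^\beta_t$ are empty at such times, and also in the atomless setting at any $t\in(a,c)$ with $\beta((a,c))=0<\xi((a,t))$. The limit $\lim_{x\to-\infty}(u^\beta-v^\xi)(t,x)=w^\beta(t)$ does hold, but it gives the law of $\sigma^\beta$ only once $u^\beta=\tilde v$ is known.

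\textbf{The approximation plan.} This does not close the gap either.
\begin{itemize}
\item You cannot freely approximate $g^\xi$. $\sigma^\xi=\sigma_k$ is tied to the earlier barriers, so only $\beta$ can be perturbed without re-running the whole chain.
\item Smooth approximations of $g^\beta$ buy no structure. The paper uses dyadic step functions $g_m\ge g^\beta$, localised to finite horizons, precisely because atomic targets reduce the problem to the finite recursion above.
\item Continuity of $u^\beta$ in the target (the easy Lemma~\ref{lem:atomic-value-stability}) gives neither convergence of the hitting times $\sigma_m$ nor identification of their limit with $\sigma^\beta$. The latter is defined through $u^\beta$ itself.
\end{itemize}
For that, the paper needs three more ingredients. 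First, Hausdorff compactness of the hypographs. Second, stability of the \emph{delayed} hitting times under $\Gamma^+$-convergence (Theorem~\ref{thm:delayed-lower-hit-stability}). That stability uses the shifted recursion and Cameron--Martin (Lemma~\ref{lem:weak-strict-lower-hit}), plus atomlessness of $\xi$ to avoid the countably many right drops of the limit boundary at the random entrance time. Third, Propositions~\ref{prop:limiting-active-support} and~\ref{prop:zero-gap-avoidance}, which show that the limiting barrier and $\Rc^\beta$ give the same hitting time after $\sigma^\xi$.

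\textbf{A sign slip in your second step.} Formally $\partial_t v^\xi-\tfrac12\partial_{xx}v^\xi=-\kappa^\xi_t((x,\infty))\le0$. With the sign you state, $v^\xi(t-s,Y_s)$ would be a supermartingale, contradicting Lemma~\ref{lem::V-submartingale}. The monotonicity you need, that $u^\beta-v^\xi$ is non-increasing in $x$, is nevertheless correct. Set $\psi_h(r,y):=v^\xi(r,y)-v^\xi(r,y+h)=\P(\sigma^\xi>r,\,X_r\in(y,y+h])$. Your coupling $Y^{x+h}=Y^x+h$ works once $s\mapsto\psi_h(t-s,Y_s)$ is a submartingale. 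That follows from the proof of Lemma~\ref{lem::V-submartingale} run with $\varphi=\mathbf 1_{(0,h]}$ in Lemma~\ref{lem::BM-switch}, so the lower-barrier part of that step survives.
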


\begin{remark}
The theorem is similar in spirit to \cite[Theorem~4.1]{COT} in its recursive
form, but the two constructions use different functions. In
\cite[equation~(3.4)]{COT}, \(u^k\) is a potential determined by the
stopped process \(X_{t\wedge\sigma_k}\). Here, $v^\xi(t,x)$
is the probability that stopping has not occurred by time \(t\) and
that \(X_t>x\). Moreover, the increment \(g^k-g^{k-1}\) is determined
by the laws of the stopping times. Consequently, the proof
does not follow directly from \cite{COT} and requires different
arguments.
\end{remark}

We next show that Theorem~\ref{thm::goal} follows from Theorem~\ref{thm::main}.

\begin{proof}[Proof of Theorem~\ref{thm::goal}]
    The one-marginal conclusion established above proves the assertion for $k=1$.

    Suppose that the assertion holds through stage $k-1$, where $2 \leq k \leq n$, and set
    $$\xi:= \mu_{k-1}, \quad \beta:= \mu_k, \quad \sigma^\xi := \sigma_{k-1}.$$
    Then $\sigma^\xi$ is the $(k-1)$-st stopping time generated by the recursion, $\sigma^\xi \sim \xi$, $v^\xi = u^{k-1}$, and $w^\beta = w^k$.
    Theorem~\ref{thm::main} therefore gives
    $$\sigma_k \sim \mu_k, \quad u^k(t,x) = \P(\sigma_k>t, X_t > x),$$
    and also shows that $\Rc^k$ is a closed lower barrier. This closes the induction.
\end{proof}

The proof of Theorem~\ref{thm::main} is the focus of Section~\ref{sec:proof_construction}.

\subsection{Optimality via martingale arguments} \label{sec:optimal}

In this section, we prove the optimality of the previously constructed solution to the $n$-marginal distribution-constrained stopping problem. Whereas the corresponding one-marginal results in
\cite{BEES2018} were obtained from the monotonicity principle, our proof uses a martingale argument at the expectation level.

Recall that the stopping times $(\sigma_i)_{1 \leq i \leq n}$ are defined in Section \ref{sec:proba}. For \(F\in C^{1,2}(\R_+\times\R)\), we introduce the operator $\mathcal G$ by
$$
    \mathcal G F(t,x)
    :=
    \left(\partial_t+\frac12\partial_{xx}\right)F(t,x).
$$
\begin{theorem}
\label{thm:opt-multi} Assume the hypotheses of Theorem~\ref{thm::goal}, and let $\sigma = (\sigma_1,\ldots,\sigma_n)$ be the stopping times constructed there.
For $i=1,\ldots,n$, let $F_i\in C^{1,2}(\R_+\times\R)$. For each $T<\infty$, suppose that
$$
    |F_i(t,x)|+|\mathcal G F_i(t,x)|
    \le
    K_{i,T}(1+|x|^{r_{i,T}}),
    \qquad
    0\le t\le T,\quad x\in\R,
$$
for some $K_{i,T}<\infty$ and $r_{i,T}\ge0$, and assume that $x\longmapsto\mathcal G F_i(t,x)$
is non-decreasing for almost every $t\ge0$. Let $\rho\in\TT(\boldsymbol{\mu}_n)$. Suppose that for all
$i=1,\ldots,n$ and $\theta\in\{\rho_i,\sigma_i\}$, the family
   $ \left\{
        F_i(t\wedge\theta,X_{t\wedge\theta})
        : t\ge0
    \right\}$
is uniformly integrable. Then
$$
    \E[F_i(\sigma_i,X_{\sigma_i})]
    \ge
    \E[F_i(\rho_i,X_{\rho_i})],
    \qquad i=1,\ldots,n.
$$
If the uniform-integrability condition holds for every
$\rho\in\TT(\boldsymbol{\mu}_n)$, then $\sigma$ maximizes
$$
    \rho
    \longmapsto
    \E\!\left[
        \sum_{i=1}^n F_i(\rho_i,X_{\rho_i})
    \right]
$$
over $\TT(\boldsymbol{\mu}_n)$.
\end{theorem}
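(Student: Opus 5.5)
The plan is to reduce the optimality statement to a pointwise comparison of the functions $(t,x)\mapsto\P(\theta>t,X_t>x)$ for $\theta\in\{\rho_i,\sigma_i\}$. I will then prove that comparison by showing that, for an arbitrary admissible $\rho$, this function is a candidate majorized by the value function $u^i$. By Theorem~\ref{thm::goal}, $u^i$ is attained by $\sigma_i$.

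\textbf{Step 1 (Dynkin reduction).} Fix $i$ and $\theta\in\{\rho_i,\sigma_i\}$. Apply It\^o's formula to $F_i(t\wedge\theta\wedge T_m,X_{t\wedge\theta\wedge T_m})$, where $T_m$ is the exit time of $X$ from $[-m,m]$, and take expectations. Let $m\to\infty$ using the polynomial growth bound on $\mathcal G F_i$ over $[0,t]$ together with Gaussian moments of $\sup_{s\le t}|X_s|$. Then let $t\to\infty$ using the assumed uniform integrability of $F_i(t\wedge\theta,X_{t\wedge\theta})$ and $\theta<\infty$ a.s. (since $\mu_i$ is a probability measure on $\R_+$). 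This gives
\[
\E[F_i(\theta,X_\theta)]=F_i(0,0)+\lim_{t\to\infty}\int_0^t \E\bigl[\mathbf 1_{\{\theta>s\}}\,\mathcal G F_i(s,X_s)\bigr]\,\d s .
\]
It therefore suffices to show, for a.e.\ $s$, that $\E[\mathbf 1_{\{\sigma_i>s\}}h(X_s)]\ge\E[\mathbf 1_{\{\rho_i>s\}}h(X_s)]$, where $h:=\mathcal G F_i(s,\cdot)$ is non-decreasing with polynomial growth. Writing $h(X_s)=h(-N)+\int_{(-N,X_s)} \d h(x)$ on $\{X_s>-N\}$ and letting $N\to\infty$ gives
\[
\E\bigl[\mathbf 1_{\{\theta>s\}}h(X_s)\bigr]=\lim_{N\to\infty}\Bigl(h(-N)\P(\theta>s, X_s>-N)+\int_{(-N,\infty)}\P(\theta>s,X_s>x)\,\d h(x)\Bigr).
\]
Since $\P(\rho_i>s)=\P(\sigma_i>s)=g^i(s)$, the boundary terms agree in the limit. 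Here I use that $\P(\theta>s,X_s\le -N)$ times the polynomial growth of $h(-N)$ vanishes by Gaussian tails. Since $\d h\ge0$, it is enough to prove
\[
\tilde v_i(s,x):=\P(\rho_i>s,X_s>x)\le \P(\sigma_i>s,X_s>x)=u^i(s,x)\qquad\text{for all }(s,x),
\]
where the last equality is Theorem~\ref{thm::goal}. Summing over $i$ then gives the final claim.

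\textbf{Step 2 (comparison $\tilde v_k\le u^k$ by induction on $k$).} Fix $(t,x)$ and $\tau\in\TT^t$, and set $r=t-\tau$. Take $Y$ independent of $X$ with $Y_0=x$. First I handle a deterministic $r$. Since $X_t-X_r$ is independent of $\Fc_r^X$ and has the law of $Y_{t-r}-x$, and $\{\rho_{k-1}>r\}$ and $X_r$ are $\Fc^X_r$-measurable, one gets $\P(\rho_{k-1}>r,X_t>x)=\E^x[\tilde v_{k-1}(r,Y_{t-r})]$. Hence
\[
\tilde v_k(t,x)\le \P(\rho_k>r,X_t>x)\le \P(\rho_{k-1}>r,X_t>x)+\P(\rho_{k-1}\le r<\rho_k)=\E^x[\tilde v_{k-1}(r,Y_{t-r})]+w^k(r),
\]
using $\rho_{k-1}\le\rho_k$ and $\P(\rho_{k-1}\le r<\rho_k)=g^k(r)-g^{k-1}(r)$. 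For $k=1$ the analogous bound is $\tilde v_1(t,x)\le \P(\rho_1>r,X_t>x)\le \E^x[\mathbf 1_{\{Y_{t-r}<0\}}\ldots]$. More precisely, split $\{\rho_1>r\}$ from $\{X_t>x\}$: either $\P(X_t>x)$ when $r=0$, or $\P(\rho_1>r)=g^1(r)$ when $r>0$. This recovers the reward in $u^1$.

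For a general $\tau\in\TT^t$ I will run the same argument conditionally on $\Fc^Y_\tau$. I realize $X_t$ as $X_{t-\tau}+(\text{an independent increment of length }\tau)$, coupled with $Y$ through time reversal of the increment. Alternatively, I first prove the inequality for $\tau$ taking finitely many values and then approximate, using right-continuity of $r\mapsto \P(\rho>r,\cdot)$ and atomlessness. For $\tau=t$ (so $r=0$) the term is $\tilde v_{k-1}(0,\cdot)=\mathbf 1_{\{\cdot<0\}}$ with $w^k(0)=0$, consistent with the reward. Taking the infimum over $\tau$ and applying the induction hypothesis $\tilde v_{k-1}\le u^{k-1}$ yields $\tilde v_k\le u^k$.

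\textbf{Main obstacle.} The analytic reductions in Step~1 are routine given the growth and UI assumptions. The delicate point is making the comparison of Step~2 rigorous for random $\tau\in\TT^t$, where the time horizon $r=t-\tau$ of the $X$-event depends on $Y$. I would try the discretization route first: prove the bound for $\tau$ with values in a finite grid, where it is a finite mixture of the deterministic case, and pass to the limit. This works because $\tau\mapsto \E^x[u^{k-1}(t-\tau,Y_\tau)+w^k(t-\tau)\mathbf 1_{\{\tau<t\}}]$ is approximated from the grid. That approximation uses continuity of $g^k$ (atomless laws) and continuity in $(t,x)$ of $u^{k-1}$ from Theorem~\ref{thm::main}.
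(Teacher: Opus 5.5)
Your Step~1 follows the paper: It\^o's formula with localization, uniform integrability to remove the horizon, and the reduction to the tail comparison tested against the non-decreasing function $\mathcal G F_i(s,\cdot)$. Your Step~2 computation for deterministic $\tau$ is also correct. The gap is the extension to a general $\tau\in\TT^t$, and neither of your suggested repairs works.

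For $\tau$ with values $s_1<\dots<s_L$,
\[
\E^x\bigl[\tilde v_{k-1}(t-\tau,Y_\tau)\bigr]=\sum_{j}\E^x\bigl[\tilde v_{k-1}(t-s_j,Y_{s_j})\mathbf 1_{\{\tau=s_j\}}\bigr].
\]
The weights $\mathbf 1_{\{\tau=s_j\}}$ are correlated with $Y_{s_j}$, so this is not a convex combination of your deterministic bounds. An argument that never uses $\{\tau=s_j\}\in\Fc^Y_{s_j}$ would apply equally to anticipating random times, and for those the target inequality is false. Take $\mu_{k-1}=\mu_k$ and $\rho_{k-1}=\rho_k$; the target then reads $\tilde v_k(t,x)\le\E^x[\tilde v_k(t-\tau,Y_\tau)]$. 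Now let $\tau\in\{0,t\}$ select pathwise the smaller of $\tilde v_k(t,x)$ and $\tilde v_k(0,Y_t)=\mathbf 1_{\{Y_t<0\}}$. The right-hand side becomes $\tilde v_k(t,x)\,\P^x(Y_t<0)<\tilde v_k(t,x)$ whenever $t>0$ and $\tilde v_k(t,x)>0$.

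The time-reversal coupling fails for a related reason. Reversing $Y$ from a stopping time (for example a hitting time) does not give a Brownian increment. The glued process is therefore not a Brownian motion and cannot be used to compute $\tilde v_k(t,x)$.

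The missing ingredient is the submartingale property of Lemma~\ref{lem::V-submartingale}: for any a.s.\ finite $\theta\in\TT$, the process $s\mapsto v_\theta(t-s,Y_s)$ is an $\F^Y$-submartingale. Your own deterministic identity already yields this. From $\P(\theta>r,X_{r+h}>y)=\E^y[v_\theta(r,Y_h)]$ and $\{\theta>r+h\}\subseteq\{\theta>r\}$ you get $v_\theta(r+h,y)\le\E^y[v_\theta(r,Y_h)]$. The Markov property of $Y$ upgrades this one-step inequality to the submartingale property.

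Optional sampling then gives $\tilde v_k(t,x)\le\E^x[\tilde v_k(t-\tau,Y_\tau)]$ for every $\tau\in\TT^t$, which is Lemma~\ref{lem:opt-ineq}. Only \emph{after} this step should you apply the pointwise bound $\tilde v_k(r,y)\le\tilde v_{k-1}(r,y)+w^k(r)\mathbf 1_{\{r>0\}}$, and then the induction hypothesis, exactly as in the paper. For $k=1$ the pointwise bound is $\tilde v_1(r,y)\le g^1(r)$ for $r>0$, together with $\tilde v_1(0,y)=\mathbf 1_{\{y<0\}}$.

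The order matters. In your chain, $\rho_k$ is compared with $\rho_{k-1}$ inside the $X$-probability at a deterministic time $r$. That step is not available when $r=t-\tau$ depends on $Y$.

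If you keep the discretization route, handle the finitely valued case by backward induction over the grid, i.e.\ discrete optional sampling for this submartingale, not by mixing. After that, your limiting argument from finitely valued to general $\tau$ is fine.
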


The proof of the above theorem is postponed to Section \ref{sec:opti}. Throughout the following three corollaries, assume the hypotheses of Theorem~\ref{thm::goal}.

\begin{corollary}\label{opt:case1}
Let \(p\ge2\). For \(i=1,\ldots,n\), let $F_i\in C^2(\R)$ have a non-decreasing second order derivative and satisfy $|F_i(x)|\le K_i(1+|x|^p)$ for some constant $K_i$. Assume further that $\int_0^\infty t^{p/2}\,\mu_n(\mathrm dt)<\infty$.
Then, for every $\rho\in\TT(\boldsymbol{\mu}_n)$ and every
$i=1,\ldots,n$,
\begin{equation}\label{eq:state-stagewise}
    \E[F_i(X_{\sigma_i})]
    \ge
    \E[F_i(X_{\rho_i})].
\end{equation}
In particular,
$$
    \E\!\left[\sum_{i=1}^n F_i(X_{\sigma_i})\right]
    \ge
    \E\!\left[\sum_{i=1}^n F_i(X_{\rho_i})\right].
$$
\end{corollary}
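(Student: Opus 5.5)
The plan is to deduce the corollary directly from Theorem~\ref{thm:opt-multi}. We view $F_i$ as a time-independent function $F_i(t,x):=F_i(x)$. Then $F_i\in C^{1,2}(\R_+\times\R)$ and $\mathcal G F_i(t,x)=\tfrac12 F_i''(x)$, which is non-decreasing in $x$ for every $t$ by hypothesis. What remains is to check the two technical assumptions of Theorem~\ref{thm:opt-multi}: the polynomial growth bound on $|F_i|+|\mathcal G F_i|$, and the uniform integrability of $\{F_i(X_{t\wedge\theta}):t\ge0\}$ for $\theta\in\{\rho_i,\sigma_i\}$, for arbitrary $\rho\in\TT(\boldsymbol{\mu}_n)$. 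Once these hold, the stagewise inequality \eqref{eq:state-stagewise} follows, and summing over $i$ gives the second claim.

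\textbf{Growth bound.} We have $|F_i(x)|\le K_i(1+|x|^p)$ by assumption. For $F_i''$ we use that it is monotone, hence locally bounded. Convexity-type control then gives polynomial growth. Concretely, by Taylor's formula with integral remainder, for $|h|=1$,
\[
F_i(x+h)+F_i(x-h)-2F_i(x)=\int_0^1(1-s)\bigl(F_i''(x+sh)+F_i''(x-sh)\bigr)\,ds .
\]
Together with the monotonicity of $F_i''$, this bounds $F_i''(x-1)$ from above and $F_i''(x+1)$ from below by a constant times $(1+|x|^p)$. This yields $|F_i''(x)|\le C(1+|x|^p)$, so the growth hypothesis holds with $r_{i,T}=p$. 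In fact we only need some polynomial power.

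\textbf{Uniform integrability.} This is the main step. Any $\theta\in\{\rho_i,\sigma_i\}$ satisfies $\theta\le\rho_n$ or $\theta\le\sigma_n$ respectively, and both of these have law $\mu_n$. Since $|F_i(X_{t\wedge\theta})|\le K_i\bigl(1+\sup_{s\le\theta}|X_s|^p\bigr)$, it suffices to show $\E[\sup_{s\le\tau}|X_s|^p]<\infty$ for a stopping time $\tau\sim\mu_n$. The Burkholder--Davis--Gundy inequality gives
\[
\E\Bigl[\sup_{s\le\tau}|X_s|^p\Bigr]\le C_p\,\E[\tau^{p/2}]=C_p\int_0^\infty t^{p/2}\,\mu_n(\mathrm dt)<\infty.
\]
Hence the family $\{F_i(X_{t\wedge\theta})\}$ is dominated by an integrable random variable and is uniformly integrable. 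This holds for every $\rho\in\TT(\boldsymbol{\mu}_n)$, so the final statement of Theorem~\ref{thm:opt-multi} applies.

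The only point requiring care is the polynomial bound on $F_i''$ from the bound on $F_i$ plus monotonicity; the rest is a direct invocation of Theorem~\ref{thm:opt-multi} and BDG.
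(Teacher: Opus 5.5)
Your proposal is correct and follows the paper's proof: reduce to Theorem~\ref{thm:opt-multi} with $\mathcal G F_i=\tfrac12F_i''$, bound $F_i''$ polynomially by a second-difference argument, and get uniform integrability from BDG and the $p/2$-moment of $\mu_n$. The differences are cosmetic. The paper uses step $h=1+|x|$ to obtain the sharper exponent $p-2$, whereas your $h=1$ gives exponent $p$, which suffices. You also dominate $\theta$ pathwise by $\rho_n$ or $\sigma_n$, where the paper uses $\mu_i\le_{\mathrm{st}}\mu_n$; both work.
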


\begin{corollary}\label{opt:case2}
Let $q\ge0$, and let $A\in C^1(\R_+)$ be non-decreasing and satisfy $|A(t)|\le K(1+t^q)$
for some $K>0$. Suppose in addition that $\int_0^\infty t^{q+1/2}\,\mu_n(\mathrm dt)<\infty$. Then, for every $\rho\in\TT(\boldsymbol{\mu}_n)$ and every
$i=1,\ldots,n$,
$$
    \E[A(\sigma_i)X_{\sigma_i}]
    \ge
    \E[A(\rho_i)X_{\rho_i}].
$$
\end{corollary}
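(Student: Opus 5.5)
We will deduce Corollary~\ref{opt:case2} from Theorem~\ref{thm:opt-multi} with the time-dependent test function $F_i(t,x):=A(t)\,x$ for every $i$. Since $A\in C^1$, $F_i\in C^{1,2}(\R_+\times\R)$, and a direct computation gives
$$
\mathcal G F_i(t,x)=A'(t)\,x .
$$
Because $A$ is non-decreasing, $A'(t)\ge0$, so $x\mapsto \mathcal G F_i(t,x)$ is non-decreasing for every $t$. For each $T<\infty$, continuity of $A$ and $A'$ on $[0,T]$ gives $|F_i|+|\mathcal GF_i|\le K_{T}(1+|x|)$. Hence the growth condition holds with $r_{i,T}=1$. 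What remains is uniform integrability, and the corollary will then follow from the first (stagewise) conclusion of Theorem~\ref{thm:opt-multi}.

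The key step is to show that $\{A(t\wedge\theta)X_{t\wedge\theta}:t\ge0\}$ is uniformly integrable for every $\theta\in\{\rho_i,\sigma_i\}$ with $\rho\in\TT(\boldsymbol{\mu}_n)$. Since $\theta\sim\mu_i\le_{\mathrm{st}}\mu_n$, we have $\E[\theta^{q+1/2}]\le\int t^{q+1/2}\mu_n(dt)<\infty$. We dominate uniformly in $t$:
$$
|A(t\wedge\theta)X_{t\wedge\theta}|\le K(1+\theta^q)\,X^*_\theta,\qquad X^*_\theta:=\sup_{s\le\theta}|X_s| .
$$
It therefore suffices to show $\E[(1+\theta^q)X^*_\theta]<\infty$. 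By the Burkholder--Davis--Gundy inequality, $\E[X^*_\theta]\le C\E[\theta^{1/2}]<\infty$.

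The main obstacle is the mixed term $\E[\theta^qX^*_\theta]$, because $\theta$ and $X^*_\theta$ are dependent. We plan to handle it by Hölder's inequality with exponents $a=(q+1/2)/q$ and $b=2q+1$:
$$
\E[\theta^qX^*_\theta]\le \E[\theta^{q+1/2}]^{1/a}\,\E[(X^*_\theta)^{2q+1}]^{1/b}.
$$
BDG with exponent $2q+1$ then gives $\E[(X^*_\theta)^{2q+1}]\le C_q\E[\theta^{q+1/2}]<\infty$. This is exactly where the moment hypothesis $\int t^{q+1/2}\mu_n(dt)<\infty$ enters. For $q=0$ the mixed term is simply $\E[X^*_\theta]$, already bounded above. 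Since the family is dominated by a single integrable random variable, it is uniformly integrable. (If the $\F$-stopping times might be infinite, we note first that $\P(\theta<\infty)=1$ since $\mu_i$ lives on $\R_+$.)

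With all hypotheses of Theorem~\ref{thm:opt-multi} verified for every $\rho$, that theorem yields $\E[A(\sigma_i)X_{\sigma_i}]\ge\E[A(\rho_i)X_{\rho_i}]$ for each $i$, which is the claim.
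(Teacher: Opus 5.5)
Your proposal is correct and follows essentially the same route as the paper. The paper also applies Theorem~\ref{thm:opt-multi} with $F_i(t,x)=A(t)x$ and dominates the stopped family by $K(1+\theta^q)\sup_{s\le\theta}|X_s|$. Your H\"older--BDG estimate is exactly what the paper packages as Lemma~\ref{lem:critical-bdg}, applied with $a=q$, $b=1$ and $r=q+1/2$.
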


\begin{corollary}\label{opt:case3}
Let $q\ge0$ and $p\ge2$, and let
$A\in C^1(\R_+)$ be nonnegative and non-decreasing, with $|A(t)|\le K(1+t^q)$
for some constant $K>0$. Let $F\in C^2(\R)$ be non-decreasing, with $F''$ non-decreasing, and suppose that
    $|F(x)|\le K_F(1+|x|^p)$ for some constant $K_F$.
Suppose in addition that $\int_0^\infty t^{q+p/2}\,\mu_n(\mathrm dt)<\infty$.
Then, for every $\rho\in\TT(\boldsymbol{\mu}_n)$ and every
$i=1,\ldots,n$,
$$
    \E[A(\sigma_i)F(X_{\sigma_i})]
    \ge
    \E[A(\rho_i)F(X_{\rho_i})].
$$
\end{corollary}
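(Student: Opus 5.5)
We apply Theorem~\ref{thm:opt-multi} with the single space--time function $F_i(t,x):=A(t)F(x)$, the same for every $i$. By the theorem, it suffices to check three things: the polynomial growth bound on $[0,T]$, monotonicity of $x\mapsto \mathcal G F_i(t,x)$, and uniform integrability of $\{F_i(t\wedge\theta,X_{t\wedge\theta}):t\ge0\}$ for $\theta\in\{\rho_i,\sigma_i\}$. The desired inequality is then exactly the stagewise conclusion of the theorem.

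\textbf{Monotonicity.} We have $\mathcal G F_i(t,x)=A'(t)F(x)+\tfrac12 A(t)F''(x)$. Since $A$ is non-decreasing, $A'\ge0$. Also $F$ is non-decreasing, $A\ge0$, and $F''$ is non-decreasing. So each summand is non-decreasing in $x$ for every $t$.

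\textbf{Growth.} For $t\le T$ we have $|A(t)|,|A'(t)|\le C_T$ because $A\in C^1$. It therefore remains to bound $F''$ polynomially, which is not assumed directly; we will derive it from the bound on $F$ via second differences. Write
\[
\Delta^2F(y):=F(y+2)-2F(y+1)+F(y)=\int_y^{y+2}F''(s)\,\kappa(s-y)\,\mathrm ds ,
\]
where $\kappa$ is the triangular kernel on $[0,2]$ with total mass $1$. Because $F''$ is non-decreasing, this gives $\Delta^2F(x-2)\le F''(x)\le \Delta^2F(x)$. Each second difference is bounded by $4K_F(1+(|x|+2)^p)$, so $|F''(x)|\le K'(1+|x|^p)$. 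Hence $|F_i|+|\mathcal GF_i|\le K_T(1+|x|^p)$ on $[0,T]\times\R$.

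\textbf{Uniform integrability (main obstacle).} Set $X^*_\theta:=\sup_{s\le\theta}|X_s|$. For all $t$,
\[
|F_i(t\wedge\theta,X_{t\wedge\theta})|\le KK_F\,(1+\theta^q)\bigl(1+(X^*_\theta)^p\bigr),
\]
so it suffices to show that this dominating variable is integrable. Put $r:=q+p/2$. If $q>0$, apply H\"older with exponents $r/q$ and $r/(p/2)$:
\[
\E[\theta^q(X^*_\theta)^p]\le \bigl(\E[\theta^r]\bigr)^{q/r}\bigl(\E[(X^*_\theta)^{2r}]\bigr)^{p/(2r)} .
\]
The Burkholder--Davis--Gundy inequality gives $\E[(X^*_\theta)^{2r}]\le C_r\E[\theta^r]$, and the case $q=0$ is immediate from BDG alone. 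The lower-order terms $\E[\theta^q]$ and $\E[(X^*_\theta)^p]$ are handled the same way. Finally, $\theta\sim\mu_i$ and $\mu_i\le_{\mathrm{st}}\mu_n$, so $\E[\theta^r]\le\int t^r\,\mu_n(\mathrm dt)<\infty$ by hypothesis; the same bound covers the lower moments. Since the family is dominated by an integrable variable, it is uniformly integrable, and Theorem~\ref{thm:opt-multi} yields the claim for every $\rho\in\TT(\boldsymbol{\mu}_n)$ and every $i$.
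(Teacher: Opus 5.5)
Your proof is correct and follows the paper's argument: apply Theorem~\ref{thm:opt-multi} to $F_i(t,x)=A(t)F(x)$, check that $\mathcal G F_i=A'F+\tfrac12AF''$ is non-decreasing in $x$, bound $F''$ polynomially via second differences, and get uniform integrability by dominating with $(1+\theta^q)(1+M_\theta^p)$ and using H\"older plus BDG, which is exactly Lemma~\ref{lem:critical-bdg}. The only cosmetic difference is your fixed step size, which gives $|F''|\lesssim 1+|x|^p$ instead of the sharper $1+|x|^{p-2}$ in Theorem~\ref{thm::polygrowth}; this is harmless, since Theorem~\ref{thm:opt-multi} only requires some polynomial bound.
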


\begin{remark}

For atomless target laws, Corollaries~\ref{opt:case1}
and~\ref{opt:case2} with \(n=1\) yield the two types of maximizing
inequalities appearing in Corollary~1.1 of \cite{BEES2018}, for payoffs
satisfying the present hypotheses. For the state payoff, monotonicity
of \(F_i''\) replaces strict positivity of the third derivative, and
the \(p/2\)-moment assumption is sufficient. For the time--state payoff,
we impose \(A\in C^1\), while allowing non-strict monotonicity and the \((q+1/2)\)-moment assumption.
\end{remark}

\begin{remark}
The proof of Theorem~\ref{thm:opt-multi} first establishes a comparison that is independent of the payoff and then combines it with It\^o's formula. The classical Root embedding has a similar structure, although its terminal-state constraint leads to a different comparison.

Let \(\tau_R\) be the Root embedding of a centred law \(\nu\) with finite second moment, and let \(\theta\) be any stopping time with finite first moment such that \(X_\theta\sim\nu\). Define
$$
P^\theta(t,x):=-\E\bigl[|X_{t\wedge\theta}-x|\bigr].
$$
Root's stopped-potential order is
$$
P^{\tau_R}(t,x)\le P^\theta(t,x),
\qquad t\ge0,\quad x\in\mathbb R.
$$
This comparison implies
$$
\E[(\tau_R-t)^+]
\le
\E[(\theta-t)^+],
\quad t\ge0,
$$
which yields the classical convex-time optimality of Root's embedding; see \cite{Rost1976,CW}.

In the present problem, the proof of Theorem~\ref{thm:opt-multi} is based on the analogous comparison \eqref{eq:sur-tail}:
$$
\P(\rho_i>t,X_t>x)
\le
\P(\sigma_i>t,X_t>x).
$$
Thus, the terminal-state constraint in the SEP leads to an order of stopped potentials, whereas the time-law constraint in the present problem leads to the comparison in \eqref{eq:sur-tail}. In both cases, the payoff inequalities follow from the corresponding comparison.
\end{remark}

\begin{remark}
We make a comparison between our approach and the classical pathwise martingale inequality arguments, by first recalling the latter in the context of
Root embedding. Let \(\varphi:\R_+\to\R\) be convex and increasing, with \(\varphi(0)=0\), and let \(f=\varphi'_+\). In
\cite[Section~5]{CW}, functions \(G\) and \(H\), with \(H\) depending
only on the state, are constructed so that
\[
G(x,t)+H(x)\le \varphi(t),
\]
with equality on the Root barrier \cite[Proposition~5.1]{CW}. The
construction integrates in the time variable and uses the fact that
the integrand agrees with \(f\) throughout the stopping region. It
therefore does not require differentiating the free boundary. Under
the assumptions imposed there, $G(X_t,t)$ is a submartingale and its
Root-stopped version is a martingale \cite[Lemma~5.2]{CW}.

Motivated by the dual formulation in
\cite[Remark~3.8]{BEES2018}, with the inequality reversed for our
maximization problem, a pathwise proof for the $i$-th payoff in our setting would seek
$$
F_i(t,X_t)\le \psi_i(t)+N^i_t,
$$
where $\psi_i$ depends only on time and $N^i$ is a true martingale
satisfying $\E[N^i_{\rho_i}]=N^i_0$ for every admissible $\r\in\TT(\boldsymbol{\mu}_n)$, with
equality upon stopping at $\sigma_i$. Since $\r_i\sim\mu_i$,
$$
\E[\psi_i(\r_i)]
   =\int_{\R_+}\psi_i(t)\,\mu_i(\mathrm dt),
$$
so the expectation of the static term is fixed by the prescribed
time law.

By contrast, the static terms in the multi-marginal Root inequality
of \cite[Appendix~A]{COT} are functions $\lambda_i$ of the stopped states, and their expectations are fixed by the prescribed state marginals. Reversing the roles of time and space in this Root construction would instead lead to a spatial integral with endpoint $X_t$. Its decomposition across $x=b(t)$ would require sufficient regularity for a change-of-variable formula with local time and a condition, such as smooth fit, controlling the local-time term \cite{P2005}. Our It\^o argument avoids these requirements by applying
the formula directly to the given $C^{1,2}$ payoff, without differentiating the free boundary.
\end{remark}

\begin{remark}
\label{rem:ou-extension}
In this paper, the proved results are restricted to Brownian motion. However, as in \cite{COT}, our methods may extend to more general processes. The following calculation suggests a possible extension to Ornstein--Uhlenbeck dynamics. Consider
\[
    dX_t=\kappa(m-X_t)\,dt+\nu\,dW_t,
    \qquad
    d\widehat Y_t=\kappa(\widehat Y_t-m)\,dt+\nu\,d\widehat W_t,
    \qquad m\in\mathbb R,\quad \kappa,\nu>0,
\]
where $W$ and $\widehat W$ are standard Brownian motions.
For $X_0=z$ and $\widehat Y_0=x$, the explicit solutions provide
\[
X_t^z\sim
\Nc\left(
m+e^{-\kappa t}(z-m),
\frac{\nu^2}{2\kappa}(1-e^{-2\kappa t})
\right), \
\widehat Y_t^x\sim
\Nc\left(
m+e^{\kappa t}(x-m),
\frac{\nu^2}{2\kappa}(e^{2\kappa t}-1)
\right).
\]
Consequently,
\[
z-\widehat Y_t^x
   \overset{d}{=}
e^{\kappa t}(X_t^z-x),
\]
and, since $e^{\kappa t}>0$, we have
\[
\mathbb P(\widehat Y_t^x<z)
   =\mathbb P(X_t^z>x),
\]
which is the OU analogue of \eqref{eq:switching-tilde-ind}. Together
with the strong Markov property, the preceding probability identity
suggests that Lemma~\ref{lem::V-submartingale} also admits an OU
counterpart. After defining \(u^\beta\) using \(\widehat Y\) and
defining \(v^{\beta}\) using the OU process \(X\), the Brownian proof of \(u^\beta=v^{\beta}\) is expected to extend to the current context.

Assuming that this identity holds at every stage, a backward-sampling
argument will lead to, for every admissible competitor $\rho$,
\[
    \E\!\left[
        h(X_t)\mathbf 1_{\{\rho_i>t\}}
    \right]
    \le
    \E\!\left[
        h(X_t)\mathbf 1_{\{\sigma_i>t\}}
    \right]
\]
for every non-decreasing Borel function \(h\) for which both
expectations are finite.

Finally, for a sufficiently regular payoff \(F_i\), the relevant drift
operator is
\[
    \mathcal G F_i(t,x)
      :=\partial_t F_i(t,x)
        +\kappa(m-x)\partial_x F_i(t,x)
        +\frac{\nu^2}{2}\partial_{xx}F_i(t,x).
\]
Under the analogous growth and uniform-integrability conditions, the
It\^o argument used above would yield OU optimality whenever
\[
    x\longmapsto\mathcal G F_i(t,x)
\]
is non-decreasing for almost every \(t\). These observations describe
a plausible extension of the present framework to Ornstein--Uhlenbeck
dynamics.
\end{remark}

\section{Proof of Theorem \ref{thm::main}} \label{sec:proof_construction}

In this section, we will prove the main results. We first establish several useful lemmas. Then, in Section \ref{sec:finite_support}, we prove the result under an additional finite-support assumption. Section~\ref{sec:general_case} then removes this assumption and
completes the proof of Theorem~\ref{thm::main}.

\subsection{Useful lemmas}

We first present a switching identity that is useful for the proof of the submartingale property.

\begin{lemma}
\label{lem::BM-switch}
    Let $X$ and $Y$ be independent standard Brownian motions.
For $r>0$, $a,b\in\R$, and any bounded Borel $\varphi:\R\to\R$,
\begin{equation}\label{eq:switching-tilde-bdd}
\E_Y^{b}\left[\varphi\left(a-Y_r\right)\right]
=\E_X^{a}\left[\varphi\left(X_r-b\right)\right].
\end{equation}
In particular, taking $\varphi(z)={\bf 1}_{\{z>0\}}$ yields
\begin{equation}\label{eq:switching-tilde-ind}
\P_Y^{b}(a>Y_r)=\P_X^{a}(X_r>b).
\end{equation}
\end{lemma}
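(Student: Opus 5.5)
Both sides of \eqref{eq:switching-tilde-bdd} are expectations of a bounded Borel function of a single Gaussian random variable, so the plan is to identify the laws of the two arguments and conclude. Under $\P_Y^{b}$ we have $Y_r=b+\sqrt r\,Z$ with $Z\sim\Nc(0,1)$, hence
$$
a-Y_r=(a-b)-\sqrt r\,Z.
$$
Under $\P_X^{a}$ we have $X_r=a+\sqrt r\,Z'$ with $Z'\sim\Nc(0,1)$, hence
$$
X_r-b=(a-b)+\sqrt r\,Z'.
$$
Since $Z\overset{d}{=}-Z$ by symmetry of the standard normal law, both random variables have the law $\Nc(a-b,r)$.

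Consequently, for any bounded Borel $\varphi$, both expectations equal
$$
\int_\R \varphi(a-b+\sqrt r\,z)\,\frac{e^{-z^2/2}}{\sqrt{2\pi}}\,\d z,
$$
which is finite because $\varphi$ is bounded. This gives \eqref{eq:switching-tilde-bdd}. For \eqref{eq:switching-tilde-ind}, take $\varphi(z)=\1_{\{z>0\}}$, which is bounded and Borel. Then $\varphi(a-Y_r)=\1_{\{a>Y_r\}}$ and $\varphi(X_r-b)=\1_{\{X_r>b\}}$, and the identity follows directly.

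No step is a real obstacle. The only point requiring care is the use of the reflection symmetry $Z\overset{d}{=}-Z$, since it is what lets the reversed difference $a-Y_r$ match $X_r-b$. The hypothesis $r>0$ is not actually needed: for $r=0$ both sides reduce to $\varphi(a-b)$.
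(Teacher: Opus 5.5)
Your proof is correct and follows essentially the same route as the paper: write $Y_r$ and $X_r$ as $b$ and $a$ plus centred Gaussians of variance $r$, use the symmetry of the normal law to identify the laws of $a-Y_r$ and $X_r-b$, and then specialize to $\varphi(z)=\mathbf{1}_{\{z>0\}}$. Your side remark that the identity also holds trivially at $r=0$ is correct.
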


\begin{proof}
Write $Y_r=b+Z$ and $X_r=a+Z'$, where $Z,Z'\sim N(0,r)$.
Since $Z'\stackrel{d}{=}-Z$, we have
\[
\E_Y^{\,b}\left[\varphi(a-Y_r)\right]
=\E\big[\varphi(a-b-Z)\big]
=\E\big[\varphi(a-b+Z')\big]
=\E_X^{a}\!\left[\varphi(X_r-b)\right],
\]
which proves \eqref{eq:switching-tilde-bdd}. The indicator case
\eqref{eq:switching-tilde-ind} corresponds to $\varphi(z)={\bf 1}_{\{z>0\}}$.
\end{proof}

\begin{lemma}\label{lem::V-submartingale}
Let $\sigma^\xi\in\TT$ have law $\xi$. For $t\ge0$ and $x\in\R$,
define
\[
V^t_s := v^\xi(t-s,Y_s),\qquad 0\le s\le t.
\]
Then $(V^t_s)_{0\le s\le t}$ is a $\mathbb{P}^x$-submartingale with respect to
the filtration $(\mathcal{F}^Y_s)_{0\le s\le t}$ generated by $Y$.
\end{lemma}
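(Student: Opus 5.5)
The plan is to reduce the submartingale property to a one-step inequality via the Markov property of $Y$, and then verify that inequality by a switching argument between $Y$ and the forward process $X$, using Lemma~\ref{lem::BM-switch}. Since $0\le v^\xi\le 1$, each $V^t_s$ is bounded and adapted to $\F^Y$ (it is a Borel function of $Y_s$: $v^\xi$ is jointly measurable, being a probability of a jointly measurable event), so only the submartingale inequality needs proof. Fix $0\le r\le s\le t$. By the Markov property of $Y$,
\[
\E^x\bigl[v^\xi(t-s,Y_s)\,\big|\,\Fc^Y_r\bigr]=\Phi(Y_r),\qquad \Phi(y):=\E^y\bigl[v^\xi(u-h,Y_h)\bigr],
\]
where $u:=t-r$ and $h:=s-r$. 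So it suffices to prove, for all $0\le h\le u$ and $y\in\R$,
\[
\E^y\bigl[v^\xi(u-h,Y_h)\bigr]\ \ge\ v^\xi(u,y).
\]
The case $h=0$ is trivial, so take $h>0$.

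\textbf{Key computation.} Write $v^\xi(u-h,z)=\E\bigl[\1_{\{\sigma^\xi>u-h\}}\1_{\{X_{u-h}>z\}}\bigr]$, with $X$ independent of $Y$. By Fubini (everything is bounded),
\[
\E^y\bigl[v^\xi(u-h,Y_h)\bigr]=\E\Bigl[\1_{\{\sigma^\xi>u-h\}}\,\P^y_Y\bigl(X_{u-h}>Y_h\bigr)\Bigr],
\]
where the inner probability is taken over $Y$ with $X_{u-h}$ frozen. Next apply \eqref{eq:switching-tilde-ind} with $a=X_{u-h}$, $b=y$ and time $h$. This gives $\P^y_Y(a>Y_h)=\P^a_X(X_h>y)$, and by the Markov property of $X$ at time $u-h$ this equals $\P(X_u>y\mid\Fc^X_{u-h})$.

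\textbf{Conclusion.} Since $\sigma^\xi$ is an $\F^X$-stopping time, $\{\sigma^\xi>u-h\}\in\Fc^X_{u-h}$, and the tower property gives
\[
\E^y\bigl[v^\xi(u-h,Y_h)\bigr]=\P\bigl(\sigma^\xi>u-h,\ X_u>y\bigr)\ \ge\ \P\bigl(\sigma^\xi>u,\ X_u>y\bigr)=v^\xi(u,y).
\]
The inequality holds because $\{\sigma^\xi>u\}\subset\{\sigma^\xi>u-h\}$.

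The only delicate point is the measure-theoretic bookkeeping in the middle step. One must condition correctly on $\Fc^X_{u-h}$ while integrating out $Y$ independently, and use the measurability of $\{\sigma^\xi>u-h\}$ with respect to $\Fc^X_{u-h}$. I expect this to be routine given the independence of $X$ and $Y$ and the boundedness of all integrands. Note also that the law $\xi$ plays no role beyond $\sigma^\xi$ being a stopping time.
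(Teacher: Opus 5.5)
Your proof is correct and follows essentially the same route as the paper's: reduce to a one-step inequality via the Markov property of $Y$, use Fubini and the switching identity \eqref{eq:switching-tilde-ind} to move the $Y$-increment onto $X$, then use the Markov property of $X$ at time $t-s$ together with $\{\sigma^\xi>t-s\}\in\Fc^X_{t-s}$ and the inclusion $\{\sigma^\xi>t-u\}\subseteq\{\sigma^\xi>t-s\}$. Your version makes the conditioning on $\Fc^X_{u-h}$ and the tower-property step explicit where the paper leaves them implicit, but the argument is the same.
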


\begin{proof}

Let $0\le u\le s\le t$. Using the definition of $v^\xi$ and independence of
$X$ and $Y$, we have
\[
\E^x_Y\left[V_s^t\big|\mathcal F_u^Y\right]
=\E^x_Y\left[\P\left[\sigma^\xi> t-s, X_{t-s}>Y_s\right]\big|\Fc_u^Y\right]
=\E^x_Y\left[\E_X\left[\mathbf{1}_{\{\sigma^\xi> t-s\}}\mathbf{1}_{\{X_{t-s}>Y_s\}}\right]\big|\Fc_u^Y\right].
\]
Define $h(y):= \E_X\left[\mathbf{1}_{\{\sigma^\xi> t-s\}}\mathbf{1}_{\{X_{t-s}>y\}}\right]$. Then $h$ is non-increasing in $y$, hence Borel measurable and bounded. In particular, $h(Y_s)$ is $\Fc_s^Y$-measurable. Therefore, by the strong Markov property of $Y$,
$$\E_Y^x\left[h(Y_{s}) \mid \Fc_u^Y\right] = \tilde{\E}^{Y_u}_{\tilde{Y}}[h(\tilde{Y}_{s-u})],$$
where $\tilde{Y}$ is an independent copy of $Y$ started from $\tilde{Y}_0 = Y_u$, and $\tilde{\E}$ is the corresponding expectation with respect to $\tilde{Y}$.

Also introduce $\tilde{X}$ as an independent copy of $X$. Then, by Fubini's theorem, Lemma~\ref{lem::BM-switch}, and the strong Markov property, one can compute
    \begin{align*}
    \E^x_Y\left[\E_X\left[\mathbf{1}_{\{\sigma^\xi> t-s\}}\mathbf{1}_{\{X_{t-s}>Y_s\}}\right]\big|\Fc_u^Y\right]
    &=\E_X\left[\tilde{\E}_{\tilde{Y}}^{Y_u}\left[\mathbf{1}_{\{\sigma^\xi > t-s}\}\mathbf{1}_{\{X_{t-s}> \tilde{Y}_{s-u}\}}\right]\right] \\
    &=\E_X\left[\mathbf{1}_{\{\sigma^\xi > t-s\}}\tilde{\E}_{\tilde X}^{X_{t-s}}\left[\mathbf{1}_{\{\tilde{X}_{s-u} > Y_u\}}\right]\right] \\
    &=\E_X\left[\mathbf{1}_{\{\sigma^\xi > t-s\}}\mathbf{1}_{\{X_{t-u} > Y_u\}}\right] \\
    &\geq \E_X\left[\mathbf{1}_{\{\sigma^\xi > t-u\}}\mathbf{1}_{\{X_{t-u}> Y_u\}}\right] \\
    &=v^\xi(t-u, Y_u) = V_u^t,
\end{align*}
where the last inequality follows because $t-u\geq t-s$ implies $\{\sigma^\xi>t-u\}\subseteq\{\sigma^\xi>t-s\}$. This completes the proof.
\end{proof}

\begin{corollary}\label{cor::subm-lower-tilde}
For $r \ge 0$ and $x \in \R$, define
$$
\overline{v}^\xi(r,x):=\P\left[\sigma^\xi > r,\ X_r\le x\right],\qquad 0\le r<\infty.
$$
For $0\le s\le t$, let $\overline{V}_s^t:=\overline{v}^\xi(t-s,Y_s)$.
Then $(\overline V_s^t)_{0\le s\le t}$ is a $\P^x$-submartingale with respect to
$(\mathcal F_s^Y)_{0\le s\le t}$.
\end{corollary}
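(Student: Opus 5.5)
The plan is to repeat the argument of Lemma~\ref{lem::V-submartingale} with the indicator $\mathbf 1_{\{X_{t-s}>y\}}$ replaced by $\mathbf 1_{\{X_{t-s}\le y\}}$. Nothing else in that proof depends on the direction of the spatial inequality. The only ingredients are the Markov property of $Y$, the switching identity of Lemma~\ref{lem::BM-switch}, the Markov property of $X$ on the event $\{\sigma^\xi>t-s\}$, and the monotonicity $\{\sigma^\xi>t-u\}\subseteq\{\sigma^\xi>t-s\}$ for $u\le s$.

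\emph{Step 1: reduce to a function of $Y_s$.} Fix $0\le u\le s\le t$ and set $\bar h(y):=\E_X[\mathbf 1_{\{\sigma^\xi>t-s\}}\mathbf 1_{\{X_{t-s}\le y\}}]$. This function is non-decreasing in $y$, hence bounded and Borel measurable. By the Markov property of $Y$,
\[
\E_Y^x\bigl[\bar h(Y_s)\mid\Fc^Y_u\bigr]=\tilde\E^{Y_u}_{\tilde Y}\bigl[\bar h(\tilde Y_{s-u})\bigr].
\]

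\emph{Step 2: switch the increment from $Y$ to $X$.} By Fubini's theorem, the right-hand side of Step~1 equals
\[
\E_X\Bigl[\mathbf 1_{\{\sigma^\xi>t-s\}}\,\tilde\E^{Y_u}_{\tilde Y}\bigl[\mathbf 1_{\{X_{t-s}\le\tilde Y_{s-u}\}}\bigr]\Bigr].
\]
Apply Lemma~\ref{lem::BM-switch} with $a=X_{t-s}$, $b=Y_u$, $r=s-u$ and $\varphi(z)=\mathbf 1_{\{z\le 0\}}$. This gives
\[
\tilde\P^{Y_u}\bigl(X_{t-s}-\tilde Y_{s-u}\le 0\bigr)=\tilde\P^{X_{t-s}}_{\tilde X}\bigl(\tilde X_{s-u}\le Y_u\bigr).
\]
Here $\tilde X$ is a Brownian motion started at $X_{t-s}$, run for time $s-u$, and independent of everything else. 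This case is the complementary event of \eqref{eq:switching-tilde-ind}, and it is covered by the general bounded-$\varphi$ identity \eqref{eq:switching-tilde-bdd}.

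\emph{Step 3: reassemble and compare.} The event $\{\sigma^\xi>t-s\}$ is $\Fc^X_{t-s}$-measurable. The Markov property of $X$ at the deterministic time $t-s$ therefore lets us replace $\tilde X_{s-u}$ by $X_{t-u}$, which gives
\[
\E_X\bigl[\mathbf 1_{\{\sigma^\xi>t-s\}}\mathbf 1_{\{X_{t-u}\le Y_u\}}\bigr]\ \ge\ \E_X\bigl[\mathbf 1_{\{\sigma^\xi>t-u\}}\mathbf 1_{\{X_{t-u}\le Y_u\}}\bigr]=\overline v^\xi(t-u,Y_u)=\overline V^t_u.
\]
The inequality holds because $t-u\ge t-s$.

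Integrability is automatic, since $0\le\overline V^t_s\le 1$, and adaptedness holds because $\overline v^\xi$ is Borel. These two facts together with Step~3 give the submartingale property. I do not expect a genuine obstacle. The only point needing care is the same as in the lemma: justifying Fubini's theorem and the joint Markov arguments under independence of $X$ and $Y$. Since all integrands are bounded indicators, that justification is routine.

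An alternative route gives a shortcut, though it is weaker on its own. Note that $\overline v^\xi(r,x)=g^\xi(r)-v^\xi(r,x)$. For $\overline V^t$ to be a submartingale one would need $g^\xi(t-s)-V^t_s$ to be a submartingale, i.e.\ $V^t$ would have to be a supermartingale up to a deterministic non-decreasing drift. This does not follow from Lemma~\ref{lem::V-submartingale}, so I would use the direct argument above.
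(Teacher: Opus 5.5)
Your proposal is correct and follows the paper's own proof. The paper reruns the argument of Lemma~\ref{lem::V-submartingale} with $\mathbf 1_{\{X_{t-s}>Y_s\}}$ replaced by $\mathbf 1_{\{X_{t-s}\le Y_s\}}$, and applies the switching identity \eqref{eq:switching-tilde-bdd} with $\varphi(z)=\mathbf 1_{\{z\le 0\}}$, exactly as in your Steps 1--3.
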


\begin{proof}
Repeat the proof of Lemma~\ref{lem::V-submartingale}, replacing the indicator
${\bf 1}_{\{X_{t-s}>Y_s\}}$ by ${\bf 1}_{\{X_{t-s}\le Y_s\}}$ throughout.
The only change in the switching step is to apply Lemma~\ref{lem::BM-switch}
with $\varphi(z)={\bf 1}_{\{z\le0\}}$, which is again bounded Borel.
\end{proof}

The following lemma is useful for determining whether $(t,x) \in \Rc^\beta$.

\begin{lemma}
    \label{lem::uvw}
    $v^\xi(t,x) \leq u^\beta(t,x) \leq v^\xi(t,x) + w^\beta(t)$ for $t \geq 0$ and $x \in \R$. Consequently, if $w^\beta(t) = 0$, then $(t,x) \in \Rc^\beta$.
\end{lemma}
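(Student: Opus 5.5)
The upper bound comes from taking $\tau\equiv0$ in the definition of $u^\beta$. This strategy is admissible, since $0\in\TT^t$, and it gives
\[
u^\beta(t,x)\le v^\xi(t,Y_0)+w^\beta(t)\mathbf 1_{\{0<t\}}\le v^\xi(t,x)+w^\beta(t).
\]
Here I use $w^\beta\ge0$, which is exactly the hypothesis $\xi\le_{\mathrm{st}}\beta$. At $t=0$ the indicator vanishes, but the inequality still holds because $w^\beta(0)\ge0$.

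For the lower bound, fix an arbitrary $\tau\in\TT^t$. Since $w^\beta\ge0$, the second term in the cost is nonnegative, so it suffices to show
\[
\E^x[v^\xi(t-\tau,Y_\tau)]\ge v^\xi(t,x).
\]
This is where Lemma~\ref{lem::V-submartingale} enters: the process $V^t_s=v^\xi(t-s,Y_s)$, $0\le s\le t$, is a $\P^x$-submartingale. It is bounded by $1$ and indexed by the compact interval $[0,t]$. Optional sampling at the bounded stopping time $\tau\le t$ then gives $\E^x[V^t_\tau]\ge V^t_0=v^\xi(t,x)$. Taking the infimum over $\tau$ yields $u^\beta\ge v^\xi$.

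The only delicate step is the application of optional sampling, because the submartingale was only established for deterministic times $u\le s$ and no path regularity of $V^t$ is known a priori. My first approach is to discretize. Let $\tau_m$ be the dyadic upper approximations of $\tau$ capped at $t$; for these, the discrete optional sampling theorem applies directly. To pass to the limit $m\to\infty$, I need right-continuity of $s\mapsto v^\xi(t-s,Y_s)$. This should follow from the continuity of $v^\xi$. For atomless $\xi$, $r\mapsto \P(\sigma^\xi>r,X_r>y)$ is continuous, and it is continuous in $y$ because $X_r$ has no atoms for $r>0$. Together with the continuity of $Y$, bounded convergence then closes the argument.

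If that continuity turns out to be awkward at $r=0$, I will fall back on the following. Restrict to stopping times taking finitely many values, for which discrete optional sampling suffices. Then argue that the infimum defining $u^\beta$ can be approximated by such stopping times, using the same convergence.

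The consequence is immediate from the two bounds. If $w^\beta(t)=0$, they force $u^\beta(t,x)=v^\xi(t,x)=v^\xi(t,x)+w^\beta(t)$, which is precisely the defining condition of $\Rc^\beta$. Hence $(t,x)\in\Rc^\beta$.
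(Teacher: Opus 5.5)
Your proof is correct and follows the paper's argument. The upper bound comes from $\tau\equiv 0$. The lower bound comes from $w^\beta\ge0$ together with the submartingale property of $V^t$ (Lemma~\ref{lem::V-submartingale}) and optional sampling, and the statement about $\Rc^\beta$ then follows immediately.

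Your dyadic justification of optional sampling, which the paper leaves implicit, also works as written. Since $\xi$ is atomless and $X_r$ has no atoms for $r>0$, $v^\xi$ is jointly continuous on $(0,\infty)\times\R$, so $V^t$ has continuous paths on $[0,t)$. The capped approximations equal $t$ whenever $\tau=t$, so the discontinuity of $v^\xi$ at $r=0$ never enters, and your fallback plan is unnecessary.
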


\begin{proof}
    Recall that
    $$u^\beta(t,x) = \inf_{\tau \in \TT^t}\E^x\left[v^\xi(t-\tau, Y_\tau) + w^\beta(t-\tau)\mathbf{1}_{\{\tau < t\}}\right].$$
    Choosing $\tau = 0$, we have
    $u^\beta(t,x) \leq v^\xi(t,x) + w^\beta(t)$.
    Moreover, since $w^\beta(t) \geq 0$ and $V^t_s := v^\xi(t-s, Y_s)$ is a submartingale,
    \begin{align*}
        u^\beta(t,x) \geq \inf_{\tau \in \TT^t} \E^x[v^\xi(t-\tau, Y_\tau)] = v^\xi(t,x).
    \end{align*}
    Hence, if $w^\beta(t) = 0$, then $u^\beta(t,x) = v^\xi(t,x)$ by the previous argument. Recall that $\Rc^\beta := \{(t,x): u^\beta(t,x) = v^\xi(t,x) + w^\beta(t)\}$. Thus $(t,x) \in \Rc^\beta$.
\end{proof}

\subsection{Proof of Theorem~\ref{thm::main} under an additional finite-support assumption}
\label{sec:finite_support}

Let $\sigma^\xi$ be an $(\mathcal F_t^X)$-stopping time in the induction procedure of Section~\ref{sec:proba}.
Assume that $\xi$ is atomless, $\xi\le_{\mathrm{st}}\beta$, $\sigma^\xi\sim\xi$, and $\sigma^\xi>0$ almost surely.
If \(\beta=\xi\), then \(w^\beta\equiv0\), hence
Lemma~\ref{lem::uvw}, together with the definition of $\Rc^\beta$, gives \(u^\beta=v^\xi\). Therefore, we
assume below $\beta \neq \xi$. Let
$$
    \ell_\xi^\beta
    :=
    \inf\{t:\beta((0,t))\neq\xi((0,t))\},
    \qquad
    r_\xi^\beta
    :=
    \sup\{t:\beta((t,\infty))\neq\xi((t,\infty))\},
$$
and set
\[
    I_\xi^\beta:=[\ell_\xi^\beta,r_\xi^\beta),
    \qquad
    \Lambda
    :=
    \{\ell_\xi^\beta\}
    \cup
    \{s\in I_\xi^\beta:\beta(\{s\})>0\}.
\]

In this section, we consider the case where $\beta|_{I_\xi^\beta}$ is finitely supported.
\begin{assumption} \label{assu:finite_supp}
    Suppose $0\le \ell_\xi^\beta<r_\xi^\beta<\infty$, $g^\beta(0)=1$,
and that \(\beta|_{I_\xi^\beta}\) is finitely supported. More precisely, $\Lambda$ can be written as
$
    \Lambda=\{t_0,t_1,\ldots,t_m\},
$
with
$t_0=\ell_\xi^\beta<t_1<\ldots<t_m
    <r_\xi^\beta=t_{m+1}$.
\end{assumption}
The two laws agree before $t_0$, hence $w^\beta(t)=0$ for \(t<t_0\).
As \(\xi\) is atomless, $w^\beta(t_0)=-\beta(\{t_0\})$.
By \(w^\beta\ge0\), it follows that
$$
    \beta(\{t_0\})=w^\beta(t_0)=0,
    \qquad
    w^\beta(t)=0,\quad 0\le t\le t_0.
$$
By the definition of \(r_\xi^\beta\), \(w^\beta(t)=0\) for
\(t>r_\xi^\beta\). Consequently, right-continuity gives $w^\beta(t)=0$ for every $t\ge r_\xi^\beta$.
The main result of this section is the following.
\begin{proposition}
\label{prop:tc-grid-value-contact}
Suppose that Assumption~\ref{assu:finite_supp} holds. Then, for every $t\ge0$ and $x\in\mathbb R$,
\begin{equation}
\label{eq:tc-grid-value-direct}
    u^\beta(t,x)
    =
    \P(
        \sigma^{\beta}>t,\,
        X_t>x
    ).
\end{equation}
Consequently,
\[
    u^\beta=v^{\beta},
    \qquad
    \sigma^{\beta}\sim\beta.
\]
\end{proposition}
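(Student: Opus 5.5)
We prove \eqref{eq:tc-grid-value-direct} by backward induction over the grid $t_0<t_1<\cdots<t_m<t_{m+1}$, treating one time interval at a time, in the spirit of the finite-grid argument of \cite{COT}. Under Assumption~\ref{assu:finite_supp}, $\beta$ has no mass in the open intervals $(t_j,t_{j+1})$. Hence $w^\beta$ is non-increasing there: $g^\beta$ is constant and $g^\xi$ is non-increasing, so $w^\beta=g^\beta-g^\xi$ has the opposite monotonicity of $g^\xi$ up to a constant. At each $t_j$, $w^\beta$ jumps down by $\beta(\{t_j\})$. Moreover $w^\beta=0$ outside $(t_0,t_{m+1})$.

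\textbf{Step 1: structure of the barrier.} Fix $t$ and write $u^\beta$ through the dynamic programming principle. On $[0,t_0]$ and on $[t_{m+1},\infty)$, Lemma~\ref{lem::uvw} gives $u^\beta=v^\xi$, the whole slab lies in $\Rc^\beta$, and therefore $\sigma^\beta=\sigma^\xi$ there. We next show that each time-section $\{x:(t,x)\in\Rc^\beta\}$ is a half-line $(-\infty,b^\beta(t)]$.

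For this, consider $d(t,x):=v^\xi(t,x)+w^\beta(t)-u^\beta(t,x)\ge0$. Rewrite the payoff of stopping at $\tau$ relative to immediate stopping. Using Corollary~\ref{cor::subm-lower-tilde} together with the identity $v^\xi+\overline v^\xi=g^\xi$, the continuation gain can be expressed through $\overline v^\xi$ and $g^\beta$. Because $\overline v^\xi(r,\cdot)$ is non-decreasing in $x$ and the submartingale drift is monotone in the space variable, continuation becomes more attractive as $x$ increases. Consequently $d(t,\cdot)$ is non-decreasing, and closedness of $\Rc^\beta$ follows from continuity of $u^\beta,v^\xi$ in $x$ and upper semicontinuity in $t$.

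\textbf{Step 2: the value identity.} We show $u^\beta(t,x)=\P(\sigma^\beta>t,X_t>x)$ by induction on $j$ for $t\in(t_j,t_{j+1}]$. The optimal backward stopping rule for $u^\beta(t,x)$ is $\hat\tau:=\inf\{s:(t-s,Y_s)\in\Rc^\beta\}\wedge t$, which gives
\[
u^\beta(t,x)=\E^x\bigl[v^\xi(t-\hat\tau,Y_{\hat\tau})+w^\beta(t-\hat\tau)\mathbf 1_{\{\hat\tau<t\}}\bigr].
\]
Decompose $\{\sigma^\beta>t,X_t>x\}$ according to the last time $\gamma$ before $t$ at which $(\cdot,X)$ lies in $\Rc^\beta$ while $\sigma^\xi\le\cdot$. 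Then apply the switching identity of Lemma~\ref{lem::BM-switch} together with the strong Markov property, exactly as in the proof of Lemma~\ref{lem::V-submartingale}. This converts the forward event into an expectation over the backward path $Y$ killed at $\Rc^\beta$. The term $v^\xi$ accounts for paths with $\sigma^\xi>$ the hitting time. The term $w^\beta(t-\hat\tau)$ should equal the probability mass of paths that have passed $\sigma^\xi$ but not yet entered $\Rc^\beta$, measured at the hitting point.

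On each grid interval with no $\beta$-atom, we will establish this identification through a martingale/uniqueness argument for the heat equation on the continuation region. Both sides solve $(\partial_t-\tfrac12\partial_{xx})\,\cdot=0$ in the continuation region, agree on $\Rc^\beta$, and coincide at $t_j^+$ by the inductive hypothesis. The jump of $w^\beta$ at $t_j$ is handled by showing that at time $t_j$ the barrier absorbs exactly mass $\beta(\{t_j\})$.

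\textbf{Step 3: the marginal law.} Setting $x=-\infty$ in \eqref{eq:tc-grid-value-direct} gives $\P(\sigma^\beta>t)=\lim_{x\to-\infty}u^\beta(t,x)$. Since $\P(Y_\tau\to-\infty)$ forces stopping, and $v^\xi(r,-\infty)=g^\xi(r)$, this limit equals $g^\xi(t)+w^\beta(t)=g^\beta(t)$. Hence $\sigma^\beta\sim\beta$ and $u^\beta=v^\beta$.

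\textbf{Main obstacle.} We expect the hardest part to be Step 2 across an atom $t_j$: verifying that the mass killed on the section $\{t_j\}\times(-\infty,b^\beta(t_j)]$ equals $\beta(\{t_j\})$, and not merely at most that. Our first approach is a variational comparison. Perturbing the backward stopping rule at calendar time $t_j$ yields inequalities in both directions between $w^\beta(t_j^-)-w^\beta(t_j)$ and the killed mass, using the optimality of $\hat\tau$ and the non-decreasing property of $d(t_j,\cdot)$.
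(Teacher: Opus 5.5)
Your proposal has a genuine gap at its core, Step~2. The heat-equation uniqueness argument is circular. To say that $\P(\sigma^\beta>t,X_t>x)$ ``agrees'' with $u^\beta=v^\xi+w^\beta$ on $\Rc^\beta$, you need, for $x\le b^\beta(t)$, that $\P(\sigma^\xi\le t<\sigma^\beta)=g^\beta(t)-g^\xi(t)$. That is precisely the law of $\sigma^\beta$, which your Step~3 then tries to derive from Step~2.

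Several further points are left unsupported:
\begin{itemize}
\item You do not know the shape of the continuation region. The data are discontinuous in $t$, since $w^\beta$ jumps at each $t_j$, and you give no regularity or uniqueness result for such a domain.
\item The probabilistic route does not close the gap either. Lemma~\ref{lem::BM-switch} is a fixed-time identity and does not turn a backward path killed at $\Rc^\beta$ into a forward event.
\item Your ``last time $\gamma$'' decomposition is, as written, empty: on $\{\sigma^\beta>t\}$ no $s\in[\sigma^\xi,t]$ has $(s,X_s)\in\Rc^\beta$.
\item Step~1's monotonicity of $d(t,\cdot)$ is only a heuristic.
\item The exact killed mass at $t_j$, which you flag as the main obstacle, is left as an unexecuted variational plan.
\item There is a sign slip in the setup: on $(t_j,t_{j+1})$, $w^\beta$ equals a constant minus $g^\xi$, so it is non-\emph{decreasing}, not non-increasing. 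This sign is exactly what matters below.
\end{itemize}

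The missing idea is that, under Assumption~\ref{assu:finite_supp}, stopping at calendar times outside $\Lambda$ never helps. You name the right tool in Step~1 but aim it at the wrong conclusion. On each backward interval whose calendar times lie between consecutive points of $\Lambda$, $g^\beta$ is constant. The identity $v^\xi+\overline v^\xi=g^\xi$ then turns the payoff into a constant minus $\overline v^\xi(t-s,Y_s)$, which is a supermartingale by Corollary~\ref{cor::subm-lower-tilde}. Hence rounding any $\tau\in\TT^t$ up to the next point of $\mathcal D_t$, i.e.\ waiting until calendar time reaches the previous point of $\Lambda$, does not increase the cost (Lemma~\ref{lem:tc-grid-rounding}). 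This reduces $u^\beta$ to the finite recursion
\[
(u^\beta-v^\xi)(t_i,y)=\min\Bigl\{w^\beta(t_i),\ \E^y\bigl[(u^\beta-v^\xi)(t_{i-1},Y_{t_i-t_{i-1}})\bigr]+\P(t_{i-1}<\sigma^\xi\le t_i,\,X_{t_i}>y)\Bigr\}.
\]
Several facts follow directly from this recursion: continuity and monotonicity in $y$, half-line sections $(-\infty,b_i]$ at each $t_i$, and empty sections at off-grid times with $w^\beta>0$ (Lemma~\ref{lem:tc-open-layer-contact}).

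The identification is then a forward induction over the grid (Proposition~\ref{prop:tc-decision-time-path-identification}). By the induction hypothesis, switching at the fixed time $t_i-t_{i-1}$, and the absence of hits in $(t_{i-1},t_i)$, the second entry of the minimum equals $\P(\sigma^\xi\le t_i\le\sigma^\beta,X_{t_i}>y)$. Continuity in $y$ then forces $\P(\sigma^\xi\le t_i\le\sigma^\beta,X_{t_i}>b_i)=w^\beta(t_i)$. So exactly the right mass is killed at $t_i$, with no variational argument needed.

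Finally, Step~3's claim that $x\to-\infty$ ``forces stopping'' is false at off-grid times with $w^\beta>0$, because those sections of $\Rc^\beta$ are empty. The limit $g^\beta(t)$ is correct, but it must come from the recursion together with the no-hit property in the open layers.
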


Below, we decompose the proof of Proposition~\ref{prop:tc-grid-value-contact} into several steps.

\subsubsection{Reduction to finitely valued stopping times}

Starting from a horizon $t$, a stopping time $\rho$ corresponds to the calendar time $t-\rho$. For
$t\in[t_0,r_\xi^\beta)$, introduce
$$
    k(t):=\max\{0\le i\le m:t_i\le t\}, \ \ \mathcal D_t :=\{t-t_i:0\le i\le k(t)\},
$$
and $\TT_t^{\Lambda}
    :=
    \{\rho\in\TT^t:\rho\text{ takes values in }\mathcal D_t\}$.

\begin{lemma}
\label{lem:tc-grid-reduction}
For every \(t\in[t_0,r_\xi^\beta)\) and \(x\in\mathbb R\),
\begin{equation}
\label{eq:tc-exact-grid-reduction}
    u^\beta(t,x)
    =
    \inf_{\rho\in\TT_t^{\Lambda}}
    \E^x\!\left[
        v^\xi(t-\rho,Y_\rho)
        +
        w^\beta(t-\rho)\mathbf 1_{\{\rho<t\}}
    \right].
\end{equation}
\end{lemma}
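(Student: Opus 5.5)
The inequality ``$\le$'' is immediate, since $\TT_t^\Lambda\subseteq\TT^t$. For ``$\ge$'' we fix an arbitrary $\tau\in\TT^t$ and construct $\rho\in\TT_t^\Lambda$ whose cost is no larger. The idea is to round the calendar time $t-\tau$ \emph{down} to the nearest grid point at or below it, i.e.\ to delay stopping in elapsed time. Concretely, on $\{t-\tau\in[t_i,t_{i+1})\}$ for $0\le i\le k(t)$, where we read $t_{k(t)+1}$ as $t$ when $t<t_{k(t)+1}$, set $\rho:=t-t_i$. On $\{t-\tau<t_0\}$, which includes $\{\tau=t\}$, set $\rho:=t$; the value $t$ is admissible since $\rho<t$ is what carries the $w^\beta$ term. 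Then $\rho\ge\tau$, and $\rho$ is a stopping time because $\{\rho\le s\}$ is determined by $\{\tau\le s'\}$ for grid-related $s'\le s$. Note that $t-\tau\le t<r_\xi^\beta=t_{m+1}$, so every value of $t-\tau$ is covered.

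The comparison has two parts.

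\emph{The $w^\beta$ term.} On $[t_i,t_{i+1})$ the measure $\beta$ has no mass in the open interval $(t_i,t_{i+1})$, since $\Lambda$ contains all atoms of $\beta$ in $I_\xi^\beta$. Hence $g^\beta$ is constant on $[t_i,t_{i+1})$, while $g^\xi$ is non-increasing. It follows that $w^\beta=g^\beta-g^\xi$ is non-decreasing on $[t_i,t_{i+1})$, so $w^\beta(t_i)\le w^\beta(t-\tau)$ on that event. On $\{t-\tau<t_0\}$ we have $w^\beta(t-\tau)=0$, because $w^\beta$ vanishes on $[0,t_0]$. Thus $w^\beta(t-\rho)\mathbf 1_{\{\rho<t\}}\le w^\beta(t-\tau)\mathbf 1_{\{\tau<t\}}$ pathwise.

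\emph{The $v^\xi$ term.} Here we use Lemma~\ref{lem::V-submartingale}: $V^t_s=v^\xi(t-s,Y_s)$ is a bounded submartingale on $[0,t]$, and $\rho\ge\tau$ are bounded stopping times. Optional sampling would give $\E^x[V^t_\rho]\ge\E^x[V^t_\tau]$, which is the wrong direction.

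This is the main obstacle, and it forces a refinement of the argument. Rounding must decrease the expected $v^\xi$ term, so the submartingale increments must be compensated. I would first try conditioning on $\mathcal F^Y_\tau$ and redoing the switching computation of Lemma~\ref{lem::V-submartingale} between $\tau$ and $\rho$. On $\{t-\tau\in[t_i,t_{i+1})\}$ that computation yields
\[
\E^x[V^t_\rho\mid\mathcal F^Y_\tau]
=\P\bigl(\sigma^\xi>t_i,\;X_{t-\tau}>Y_\tau\bigr)\Big|_{\text{frozen}}
= v^\xi(t-\tau,Y_\tau)+\P\bigl(t_i<\sigma^\xi\le t-\tau,\;X_{t-\tau}>y\bigr)\big|_{y=Y_\tau}.
\]
The excess is at most $\xi((t_i,t-\tau])=g^\xi(t_i)-g^\xi(t-\tau)$.

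This excess is exactly offset by the gain in the $w^\beta$ term. Since $g^\beta(t_i)=g^\beta(t-\tau)$ on this event,
\[
w^\beta(t-\tau)-w^\beta(t_i)=g^\xi(t_i)-g^\xi(t-\tau).
\]
Summing the two contributions, the total conditional cost of $\rho$ is at most that of $\tau$ on each event. On $\{t-\tau<t_0\}$ the same computation works with $t_i$ replaced by $0$ and $\rho=t$. There the excess is at most $g^\xi(0)-g^\xi(t-\tau)$, and since $\xi$ and $\beta$ agree below $t_0$, this equals $\beta((0,t-\tau])$. That quantity must vanish: it is $1-g^\beta(t-\tau)$ with $g^\beta(0)=1$, and it is zero because $t_0=\ell^\beta_\xi$ together with $\sigma^\xi>0$ gives $g^\xi=1$ there. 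If this last point fails, I would instead keep $\rho=\tau$ on that set and treat it separately.

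Taking expectations then gives ``$\ge$''.
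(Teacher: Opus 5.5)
Your treatment of the blocks $\{t-\tau\in[t_i,t_{i+1})\}$ is sound, and it is essentially the paper's argument. You round $\tau$ up to $t-t_i$ and check, via the switching computation, that the excess $\P(t_i<\sigma^\xi\le t-\tau,\,X_{t-\tau}>y)$ is dominated by the gain $w^\beta(t-\tau)-w^\beta(t_i)=g^\xi(t_i)-g^\xi(t-\tau)$. This is exactly the statement that $v^\xi(t-s,Y_s)+w^\beta(t-s)=g^\beta(t-d_j)-\overline v^\xi(t-s,Y_s)$ is a supermartingale on each block, which the paper obtains from Corollary~\ref{cor::subm-lower-tilde}. A minor slip: the top block should be $[t_{k(t)},t]$, so that $\tau=0$ is covered.

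The genuine gap is on $\{t-\tau<t_0\}=\{\tau>t-t_0\}$ when $t_0>0$ (for $t_0=0$ this set is empty). First, $\rho=t$ is not admissible. $\TT_t^\Lambda$ requires values in $\mathcal D_t=\{t-t_i:0\le i\le k(t)\}$, and $t\in\mathcal D_t$ only if $t_0=0$. Second, and more seriously, the cost comparison there is false. Since $w^\beta\equiv0$ on $[0,t_0]$, nothing compensates the excess $\P(\sigma^\xi\le t-\tau,\,X_{t-\tau}>y)$. Your claim that $\xi((0,t-\tau])=0$ is wrong: $\ell^\beta_\xi$ is where $\xi$ and $\beta$ first \emph{differ}, not where they first charge mass. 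Likewise, $\sigma^\xi>0$ only gives $g^\xi(0)=1$; in Example~\ref{exam:sto_order}, for instance, $g^1=g^2<1$ on $(0,1]$. On this set $V^t_s=v^\xi(t-s,Y_s)$ is a genuine submartingale, so delaying to $t$ strictly increases the cost whenever $\xi$ charges $(0,t-\tau]$. Your fallback of keeping $\rho=\tau$ there does not produce a grid-valued time either.

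The missing idea is to round in the \emph{opposite} direction on this last block: set $\rho:=t-t_0$ on $\{\tau>t-t_0\}$. This is still a stopping time, since $\{\tau>t-t_0\}\in\mathcal F^Y_{t-t_0}$, and its value lies in $\mathcal D_t$. Optional sampling for the submartingale $V^t$ on $[t-t_0,t]$ then gives $\E^x[V^t_{t-t_0}\mathbf 1_{\{\tau>t-t_0\}}]\le\E^x[V^t_\tau\mathbf 1_{\{\tau>t-t_0\}}]$. The $w^\beta$ terms vanish on both sides, because $w^\beta(t_0)=0$ and $w^\beta(t-\tau)=0$ there. This is how the paper treats the event $A_*$ in Lemma~\ref{lem:tc-grid-rounding}.
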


\begin{proof}
For $\rho\in\TT^t$, let
$
J_{t,x}(\rho)
:=
\E^x\!\left[
v^\xi(t-\rho,Y_\rho)
+w^\beta(t-\rho)\mathbf 1_{\{\rho<t\}}
\right]$.
Since $\TT_t^{\Lambda}\subseteq\TT^t$,
\[
u^\beta(t,x)
=\inf_{\rho\in\TT^t}J_{t,x}(\rho)
\le
\inf_{\rho\in\TT_t^{\Lambda}}J_{t,x}(\rho).
\]
On the other hand, by Lemma~\ref{lem:tc-grid-rounding}, for every
$\tau\in\TT^t$, there exists
$\bar\tau\in\TT_t^{\Lambda}$ such that
$J_{t,x}(\bar\tau)\le J_{t,x}(\tau)$.
Consequently,
\[
\inf_{\rho\in\TT_t^{\Lambda}}J_{t,x}(\rho)
\le J_{t,x}(\bar\tau)
\le J_{t,x}(\tau).
\]
Taking the infimum over $\tau\in\TT^t$, we obtain the reverse
inequality and hence the equality.
\end{proof}

\subsubsection{Study of the reduced stopping problem}


The next proposition gives the recursion from $t_{i-1}$ to $t_i$ and identifies the barrier at $t_i$. For $t \geq 0$, write
\[
    \Rc_t^\beta
    :=
    \{y\in\mathbb R:(t,y)\in\Rc^\beta\}.
\]

\begin{proposition}
\label{prop:tc-direct-tail-recursion}
We have $u^\beta(t_0,\cdot)-v^\xi(t_0,\cdot)=0$.
For \(i=1,\ldots,m\) and \(y\in\mathbb R\), we have
\begin{equation}
\label{eq:tc-upper-tail-cap-direct}
\begin{aligned}
    u^\beta(t_i,y)-v^\xi(t_i,y)
    =
    \min\Bigg\{
        w^\beta(t_i),\,
        &\E^y\!\left[
            u^\beta\!\left(
                t_{i-1},Y_{t_i-t_{i-1}}
            \right)
            -
            v^\xi\!\left(
                t_{i-1},Y_{t_i-t_{i-1}}
            \right)
        \right]\\
        &\quad+
        \P\!\left(
            t_{i-1}<\sigma^\xi\le t_i,\,
            X_{t_i}>y
        \right)
    \Bigg\}.
\end{aligned}
\end{equation}
For every \(i=0,\ldots,m\), the map $
    y
    \longmapsto
    u^\beta(t_i,y)-v^\xi(t_i,y)$
is continuous and non-increasing, with
\[
    \lim_{y\to-\infty}
    \bigl[
        u^\beta(t_i,y)-v^\xi(t_i,y)
    \bigr]
    =
    w^\beta(t_i),
    \qquad
    \lim_{y\to+\infty}
    \bigl[
        u^\beta(t_i,y)-v^\xi(t_i,y)
    \bigr]
    =
    0.
\]
Set \(b_0:=+\infty\). For \(i=1,\ldots,m\), let $
    b_i
    :=
    \sup\left\{
        y\in\mathbb R:
        u^\beta(t_i,y)-v^\xi(t_i,y)=w^\beta(t_i)
    \right\}$. Then
   $\Rc^\beta_{t_i}
    =
    (-\infty,b_i]$.
\end{proposition}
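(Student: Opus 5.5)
The plan is to work entirely with the reduced problem of Lemma~\ref{lem:tc-grid-reduction}, i.e.\ stopping times valued in the finite grid $\mathcal D_{t}$. Set $d_i(y):=u^\beta(t_i,y)-v^\xi(t_i,y)$.

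\textbf{Base case.} For $t=t_0$ we have $\mathcal D_{t_0}=\{0\}$, so the only admissible $\rho$ is $\rho=0$. This gives $u^\beta(t_0,y)=v^\xi(t_0,y)+w^\beta(t_0)\mathbf 1_{\{t_0>0\}}=v^\xi(t_0,y)$, because $w^\beta(t_0)=0$. Hence $d_0\equiv 0$.

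\textbf{Recursion.} For $i\ge1$, any $\rho\in\TT^{\Lambda}_{t_i}$ either equals $0$ or is $\ge s:=t_i-t_{i-1}$. The event $\{\rho=0\}$ is $\mathcal F^Y_0$-measurable, hence deterministic under $\P^y$. So either $\rho=0$, with cost $v^\xi(t_i,y)+w^\beta(t_i)$, or $\rho=s+\rho'\circ\theta_s$, where $\rho'$ is a grid stopping time for the horizon $t_{i-1}$; note that $\mathcal D_{t_i}\setminus\{0\}=s+\mathcal D_{t_{i-1}}$.

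By the Markov property of $Y$ at time $s$, the cost in the second case is $\E^y[J_{t_{i-1},Y_s}(\rho')]$. I will show that the infimum over such $\rho$ equals $\E^y[u^\beta(t_{i-1},Y_s)]$.
\begin{itemize}
\item The inequality ``$\ge$'' is immediate.
\item For ``$\le$'', I would use the finite-step structure: backward induction over the finitely many grid points gives an explicit optimal grid rule, namely stop at the first grid time at which the continuous function $d_j$ attains $w^\beta(t_j)$. This avoids any measurable-selection issue. Alternatively, one can take a countable $\varepsilon$-optimal patching in $y$, using the continuity of $u^\beta(t_{i-1},\cdot)$ obtained from the induction hypothesis.
\end{itemize}
This yields the dynamic programming identity
\[
u^\beta(t_i,y)=\min\bigl\{v^\xi(t_i,y)+w^\beta(t_i),\ \E^y[u^\beta(t_{i-1},Y_s)]\bigr\}.
\]
Subtracting $v^\xi(t_i,y)$ leaves one identity to check: $\E^y[v^\xi(t_{i-1},Y_s)]-v^\xi(t_i,y)=\P(t_{i-1}<\sigma^\xi\le t_i,X_{t_i}>y)$. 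This follows exactly as in the proof of Lemma~\ref{lem::V-submartingale}. Using Fubini, Lemma~\ref{lem::BM-switch} and the Markov property of $X$ at $t_{i-1}$, one gets $\E^y[v^\xi(t_{i-1},Y_s)]=\P(\sigma^\xi>t_{i-1},X_{t_i}>y)$. Subtracting $v^\xi(t_i,y)=\P(\sigma^\xi>t_i,X_{t_i}>y)$ then gives \eqref{eq:tc-upper-tail-cap-direct}.

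\textbf{Properties of $d_i$, by induction on $i$.} Assume $d_{i-1}$ is continuous and non-increasing, with limits $w^\beta(t_{i-1})$ at $-\infty$ and $0$ at $+\infty$.
\begin{itemize}
\item The map $y\mapsto\E^y[d_{i-1}(Y_s)]$ is a Gaussian convolution with $s>0$. It is therefore continuous and non-increasing, with the same limits by dominated convergence.
\item The map $y\mapsto \P(t_{i-1}<\sigma^\xi\le t_i,X_{t_i}>y)$ is non-increasing. It is continuous because $X_{t_i}$ is atomless ($t_i>0$). Its limits are $g^\xi(t_{i-1})-g^\xi(t_i)$ at $-\infty$ and $0$ at $+\infty$.
\item The sum of these two maps therefore tends to $g^\beta(t_{i-1})-g^\xi(t_i)$ at $-\infty$. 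Since $t_i\in\Lambda$ is an atom of $\beta$, we have $g^\beta(t_{i-1})>g^\beta(t_i)$, so this limit strictly exceeds $w^\beta(t_i)$.
\item Taking the minimum with the constant $w^\beta(t_i)$ preserves continuity and monotonicity. The limits become $w^\beta(t_i)$ at $-\infty$ and $0$ at $+\infty$.
\end{itemize}

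\textbf{The barrier.} By monotonicity and continuity, $\{y: d_i(y)=w^\beta(t_i)\}$ is a closed left half-line. It is nonempty because of the strict inequality above. By the definition of $\Rc^\beta$ it equals $\Rc^\beta_{t_i}$, so $\Rc^\beta_{t_i}=(-\infty,b_i]$. In the degenerate case $w^\beta(t_i)=0$ we get $d_i\equiv0$ and $b_i=+\infty$, consistent with Lemma~\ref{lem::uvw}.

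I expect the main obstacle to be the dynamic programming step, namely justifying that the continuation value equals $\E^y[u^\beta(t_{i-1},Y_s)]$. I would handle it by the explicit backward-induction optimal rule on the finite grid, whose measurability follows from the continuity of each $d_j$ proved along the induction.
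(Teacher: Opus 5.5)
Your proposal is correct and follows the paper's proof essentially step by step. The recursion comes from the grid reduction of Lemma~\ref{lem:tc-grid-reduction} together with the switching identity, and continuity, monotonicity, the limits (using the strict excess $g^\beta(t_{i-1})-g^\beta(t_i)\ge\beta(\{t_i\})>0$ at $-\infty$) and the barrier identification all come from the same induction.

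The only addition is that you justify the dynamic programming identity, which the paper asserts directly. Note, though, that a grid stopping time $\rho\ge s$ may depend on the path before time $s$, so the representation $\rho=s+\rho'\circ\theta_s$ does not cover all such $\rho$. The ``$\ge$'' direction is therefore better argued by conditioning on $\mathcal F^Y_s$, or by the discrete Snell-envelope submartingale argument.
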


\begin{proof}
Since \(w^\beta(t_0)=0\), Lemma~\ref{lem::uvw} gives
\(u^\beta(t_0,\cdot)=v^\xi(t_0,\cdot)\), and hence
\(u^\beta(t_0,\cdot)-v^\xi(t_0,\cdot)=0\). Fix \(i\ge1\). By Lemma~\ref{lem:tc-grid-reduction}, we have the
recursive relationship:
$$
    u^\beta(t_i,y)
    =
    \min\Big\{
        v^\xi(t_i,y)+w^\beta(t_i), \,\E^y\!\left[
            u^\beta\!\left(
                t_{i-1},Y_{t_i-t_{i-1}}
            \right)
        \right]
    \Big\}.
$$
Hence
$u^\beta(t_i,y)-v^\xi(t_i,y)=
    \min\Bigg\{
        w^\beta(t_i),\,
        \E^y\!\left[
            u^\beta\!\left(
                t_{i-1},Y_{t_i-t_{i-1}}
            \right)
        \right]
        -
        v^\xi(t_i,y)
    \Bigg\}$.
Applying the switching identity of Lemma~\ref{lem::BM-switch} to $\E^y\!\left[
        u^\beta\!\left(
            t_{i-1},Y_{t_i-t_{i-1}}
        \right)
    \right]
    -v^\xi(t_i,y)$, we obtain \eqref{eq:tc-upper-tail-cap-direct}.

We now prove the properties of the map 
$y
    \longmapsto
    u^\beta(t_i,y)-v^\xi(t_i,y)$ by induction. Suppose that they hold for $i-1$.
It is clear that the map
$
    y\longmapsto
    \E^y\!\left[
        u^\beta\!\left(
            t_{i-1},Y_{t_i-t_{i-1}}
        \right)
        -
        v^\xi\!\left(
            t_{i-1},Y_{t_i-t_{i-1}}
        \right)
    \right]$
is continuous and non-increasing. The probability term $\P(t_{i-1} < \sigma^\xi \leq t_i, X_{t_i} > y)$ has the same properties; hence \eqref{eq:tc-upper-tail-cap-direct} gives the asserted continuity and monotonicity.

For the asymptotic behaviours of this map, using the induction hypothesis and dominated convergence, the second
entry of the minimum tends to
\(w^\beta(t_i)+\beta(\{t_i\})\) when \(y\to-\infty\), and to \(0\) when
\(y\to+\infty\).
It follows that
$$
    \lim_{y\to-\infty}
    \bigl[
        u^\beta(t_i,y)-v^\xi(t_i,y)
    \bigr]
    =
    w^\beta(t_i), \ \mbox{and}
     \lim_{y\to+\infty}
    \bigl[
        u^\beta(t_i,y)-v^\xi(t_i,y)
    \bigr]
    =
    0.
$$
This closes the induction.

Finally,
\[
    (t_i,y)\in\Rc^\beta
    \iff
    u^\beta(t_i,y)-v^\xi(t_i,y)=w^\beta(t_i).
\]
Since the difference is continuous, non-increasing, and bounded above
by \(w^\beta(t_i)\), $\Rc_{t_i}^\beta$ is a closed lower interval. For \(i\ge1\), as
\(\beta(\{t_i\})>0\), the second entry of the minimum in
\eqref{eq:tc-upper-tail-cap-direct} has limit
\[
    w^\beta(t_i)+\beta(\{t_i\})
    >
    w^\beta(t_i)
\]
when \(y\to-\infty\), hence $\Rc_{t_i}^\beta$ is non-empty. If
\(w^\beta(t_i)>0\), the same entry tends to
\(0<w^\beta(t_i)\) when \(y\to+\infty\), and hence
\(b_i\in\mathbb R\). If \(w^\beta(t_i)=0\),
Lemma~\ref{lem::uvw} gives
\(u^\beta(t_i,\cdot)-v^\xi(t_i,\cdot)=0\), so
\(b_i=+\infty\).
\end{proof}

\begin{remark}[Comparison with \cite{EJ}]
Lemma~\ref{lem:tc-grid-reduction} parallels the restriction to finitely valued stopping times in the proof of Ekstr\"om and Janson~\cite[Theorem~4.2]{EJ}. In their problem, the
reward on $\{\r < t\}$ is a piecewise constant survival function, so rounding can be compared
pathwise. In the present problem, the payoff
\[
    v^\xi(t-\rho,Y_\rho)+w^\beta(t-\rho)\mathbf 1_{\{\rho<t\}}
\]
also depends on the random state $Y_\rho$ at the stopping time. The required comparison therefore holds in expectation by applying the optional sampling theorem separately on the intervals determined by consecutive points of $\Lambda$, on which $g^\beta$ is constant.

Proposition~\ref{prop:tc-direct-tail-recursion} parallels
\cite[Section~4.1]{EJ}, where they construct the one-marginal barrier
directly by choosing its levels successively to match the given
survival function. Here we instead start from the
stopping region \(\Rc^\beta\). The recursion
\eqref{eq:tc-upper-tail-cap-direct} determines \(b_i\), the endpoint of
\(\Rc_{t_i}^\beta\), and \eqref{eq:tc-grid-active-upper-tail} provides
a probability representation of
\(u^\beta(t_i,\cdot)-v^\xi(t_i,\cdot)\). The additional probability
term in \eqref{eq:tc-upper-tail-cap-direct} accounts for paths
satisfying \(t_{i-1}<\sigma^\xi\le t_i\), which are absent from the
one-marginal construction.
\end{remark}

\subsubsection{Characterization of the barrier outside \texorpdfstring{\(\Lambda\)}{Lambda}}
\begin{lemma}
\label{lem:tc-open-layer-contact}
If \(t\notin \Lambda\) and \(w^\beta(t)>0\), then $\Rc_t^\beta=\varnothing$.
\end{lemma}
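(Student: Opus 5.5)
The plan is to exhibit, for every $y\in\mathbb R$, an admissible stopping time whose cost is \emph{strictly} smaller than $v^\xi(t,y)+w^\beta(t)$. Then $u^\beta(t,y)<v^\xi(t,y)+w^\beta(t)$, so $(t,y)\notin\Rc^\beta$.

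\textbf{Locating $t$.} Since $w^\beta=0$ on $[0,t_0]$ and on $[r_\xi^\beta,\infty)$, the hypothesis $w^\beta(t)>0$ forces $t\in(t_0,r_\xi^\beta)$. Since $t\notin\Lambda$, there is a unique $i\in\{0,\ldots,m\}$ with $t_i<t<t_{i+1}$, where $t_{m+1}=r_\xi^\beta$. On $[t_i,t]$ the measure $\beta$ has no atoms. Indeed, $\beta|_{I_\xi^\beta}$ is supported in $\Lambda$, so $g^\beta(t)=g^\beta(t_i)$. Hence
\[
w^\beta(t_i)=w^\beta(t)-\xi((t_i,t]).
\]

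\textbf{Step 1: the upper bound.} I use the deterministic elapsed time $h:=t-t_i$, followed by the optimal continuation from calendar time $t_i$. This gives
\[
u^\beta(t,y)\le \E^y\bigl[u^\beta(t_i,Y_h)\bigr].
\]
I would justify this through the finite-grid reduction, Lemma~\ref{lem:tc-grid-reduction}. At horizon $t$ the grid $\mathcal D_t$ is $\{t-t_0,\ldots,t-t_i\}$. A grid stopping time that is forced to wait at least until $h$, and then uses a grid stopping time for $(t_i,Y_h)$, is again admissible. Taking the infimum over the continuation therefore gives the displayed inequality. This is the standard easy half of the dynamic programming principle on a finite grid, and it amounts to a backward recursion over finitely many dates.

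\textbf{Step 2: rewriting the bound.} I write
\[
u^\beta(t_i,\cdot)=v^\xi(t_i,\cdot)+D_i, \qquad D_i:=u^\beta(t_i,\cdot)-v^\xi(t_i,\cdot).
\]
The switching identity, Lemma~\ref{lem::BM-switch}, combined with the Markov property at time $t_i$ (exactly as in the proof of Lemma~\ref{lem::V-submartingale}), gives
\[
\E^y[v^\xi(t_i,Y_h)]=v^\xi(t,y)+\P(t_i<\sigma^\xi\le t,\ X_t>y).
\]
By Proposition~\ref{prop:tc-direct-tail-recursion}, $D_i\le w^\beta(t_i)$ everywhere. In the case $i\ge1$, $D_i$ is continuous and non-increasing with limit $0<w^\beta(t_i)$ at $+\infty$ whenever $w^\beta(t_i)>0$.

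\textbf{Step 3: strictness, by cases.}

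Case $w^\beta(t_i)>0$. The Gaussian law of $Y_h$ charges the set where $D_i<w^\beta(t_i)$. Hence $\E^y[D_i(Y_h)]<w^\beta(t_i)$, and
\[
u^\beta(t,y)<v^\xi(t,y)+w^\beta(t)-\xi((t_i,t])+\P(t_i<\sigma^\xi\le t,X_t>y)\le v^\xi(t,y)+w^\beta(t).
\]

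Case $w^\beta(t_i)=0$. This covers $i=0$ and possibly other $i$. Then $D_i=0$ and $w^\beta(t)=\xi((t_i,t])>0$, so the bound becomes
\[
v^\xi(t,y)+w^\beta(t)-\P(t_i<\sigma^\xi\le t,\ X_t\le y).
\]
Strictness needs $\P(t_i<\sigma^\xi\le t,\ X_t\le y)>0$. Because $\xi$ is atomless, $\P(\sigma^\xi=t)=0$. On $\{t_i<\sigma^\xi<t\}$, which has positive probability, the increment $X_t-X_{\sigma^\xi}$ is, conditionally on $\Fc_{\sigma^\xi}$, a nondegenerate Gaussian by the strong Markov property. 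It therefore lies below $y-X_{\sigma^\xi}$ with positive conditional probability.

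\textbf{Main obstacle.} I expect the main obstacle to be making Step 1 rigorous without a full dynamic programming principle. The finite-grid reduction should make it clean, since only finitely many decision dates are involved. The strictness bookkeeping in Step 3 is routine once the identity for $w^\beta(t_i)$ is in place.
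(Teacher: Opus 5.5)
Your proof is correct and follows the paper's argument: reduce to $\E^y[u^\beta(t_i,Y_{t-t_i})]$ via the grid reduction, apply the switching identity, use $w^\beta(t)=w^\beta(t_i)+\xi((t_i,t])$, and obtain strictness either from $D_i<w^\beta(t_i)$ near $+\infty$ (when $w^\beta(t_i)>0$) or from $\P(t_i<\sigma^\xi\le t,\,X_t\le y)>0$ (when $\xi((t_i,t])>0$). The only difference is that you use just the upper-bound half of the one-step recursion, which is all this lemma needs (the paper states the equality because it reuses it later, in Case~3 of the proof of Proposition~\ref{prop:tc-grid-value-contact}); one small correction is that $\beta$ has no mass on $(t_i,t]$ rather than $[t_i,t]$, since $\beta(\{t_i\})>0$ for $i\ge1$, though your conclusion $g^\beta(t)=g^\beta(t_i)$ is unaffected.
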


\begin{proof}
Suppose that $t_i<t<t_{i+1}$ for some \(i\in\{0,\ldots,m\}\). Combining Lemma~\ref{lem:tc-grid-reduction} with
the strong Markov property, we have
\begin{equation}
\label{eq:tc-open-layer-value}
    u^\beta(t,y)
    =
    \E^y\!\left[
        u^\beta(t_i,Y_{t-t_i})
    \right].
\end{equation}
Subtracting \(v^\xi(t,y)\) from
\eqref{eq:tc-open-layer-value} and using the switching identity, we obtain
\[
\begin{aligned}
    u^\beta(t,y)-v^\xi(t,y)=
    \E^y\!\left[
        u^\beta(t_i,Y_{t-t_i})
        -
        v^\xi(t_i,Y_{t-t_i})
    \right]+
    \P(t_i<\sigma^\xi\le t,X_t>y).
\end{aligned}
\]
Since \(\beta\) has no mass in \((t_i,t_{i+1})\), $w^\beta(t)
    =
    w^\beta(t_i)+\xi((t_i,t])$.
It follows that
\[
\begin{aligned}
    w^\beta(t)
    -
    \bigl[
        u^\beta(t,y)-v^\xi(t,y)
    \bigr] &=
    \left(w^\beta(t_i)
    -
    \E^y\!\left[
        u^\beta(t_i,Y_{t-t_i})
        -
        v^\xi(t_i,Y_{t-t_i})
    \right]\right)\\
    &\quad+
    \P(t_i<\sigma^\xi\le t,X_t\le y).
\end{aligned}
\]
It is clear that both terms on the right-hand side are non-negative. We show below that at
least one term is strictly positive. If \(w^\beta(t_i)>0\), then
\[
    \lim_{z\to+\infty}
    \bigl[
        u^\beta(t_i,z)-v^\xi(t_i,z)
    \bigr]
    =
    0.
\]
Choose \(M>0\) such that $
    u^\beta(t_i,M)-v^\xi(t_i,M)
    <
    w^\beta(t_i)$. By Proposition~\ref{prop:tc-direct-tail-recursion},
the map
    $z\longmapsto u^\beta(t_i,z)-v^\xi(t_i,z)$
is non-increasing and bounded above by \(w^\beta(t_i)\). Therefore
\[
\begin{aligned}
    \E^y\!\left[
        u^\beta(t_i,Y_{t-t_i})
        -
        v^\xi(t_i,Y_{t-t_i})
    \right]
    &\le
    w^\beta(t_i)\P^y(Y_{t-t_i}<M)+
    \bigl[
        u^\beta(t_i,M)-v^\xi(t_i,M)
    \bigr]
    \P^y(Y_{t-t_i}\ge M)\\
    &<
    w^\beta(t_i).
\end{aligned}
\]

If \(\xi((t_i,t])>0\), atomlessness of \(\xi\) gives
\(\P(t_i<\sigma^\xi<t)>0\). By the strong Markov property,
\[
\begin{aligned}
    \P(t_i<\sigma^\xi\le t,X_t\le y)=
    \E\!\left[
        \mathbf 1_{\{t_i<\sigma^\xi<t\}}
        \Phi\!\left(
            \frac{y-X_{\sigma^\xi}}
                 {\sqrt{t-\sigma^\xi}}
        \right)
    \right]
    >0.
\end{aligned}
\]
Finally, $w^\beta(t)
    =
    w^\beta(t_i)+\xi((t_i,t])$ and \(w^\beta(t)>0\) imply
that either \(w^\beta(t_i)>0\) or \(\xi((t_i,t])>0\). In conclusion, we have
$u^\beta(t,y)-v^\xi(t,y)
    <
    w^\beta(t)$
    for every $y\in\mathbb R$,
and consequently $\Rc_t^\beta=\varnothing$.
\end{proof}

\begin{remark}
\label{rmk:tc-finite-contact-barrier}
Combining Lemma~\ref{lem::uvw}, Proposition~\ref{prop:tc-direct-tail-recursion}, and Lemma~\ref{lem:tc-open-layer-contact}, we can summarize the geometry of the barrier \(\Rc^\beta\): for every \(t\ge0\),
\[
    \Rc_t^\beta
    =
    \begin{cases}
        (-\infty,b_i],
            &t=t_i,\quad i=0,\ldots,m,\\[1mm]
        \mathbb R,
            &t\notin\Lambda\text{ and }w^\beta(t)=0,\\[1mm]
        \varnothing,
            &t\notin\Lambda\text{ and }w^\beta(t)>0.
    \end{cases}
\]
Consequently, \(\Rc^\beta\) is a closed lower barrier. Indeed, \(w^\beta\) is continuous and non-decreasing between consecutive points of \(\Lambda\cup\{r_\xi^\beta\}\) and can only jump downward at
these points; hence \(w^\beta\) is lower semicontinuous. \(\Rc^\beta\) is the finite union of the closed cylinder \(\{t:w^\beta(t)=0\}\times\mathbb R\) and the closed sets
\(\{t_i\}\times(-\infty,b_i]\), \(i=0,\ldots,m\), and is a closed lower barrier.
\end{remark}

\subsubsection{Probabilistic representation at the points of \texorpdfstring{\(\Lambda\)}{Lambda}}
\begin{proposition}
\label{prop:tc-decision-time-path-identification}
For every \(i=0,\ldots,m\) and \(y\in\mathbb R\), we have
\begin{equation}
\label{eq:tc-grid-active-upper-tail}
    u^\beta(t_i,y)-v^\xi(t_i,y)
    =
    \P\!\left(
        \sigma^\xi\le t_i<\sigma^{\beta},\,
        X_{t_i}>y
    \right).
\end{equation}
Moreover, let \(t_{m+1}:=r_\xi^\beta\). Then
\begin{equation}
\label{eq:tc-no-open-layer-hit}
    \P\!\left(
        t_i<\sigma^{\beta}<t_{i+1}
    \right)
    =
    0,
    \qquad
    i=0,\ldots,m.
\end{equation}
Consequently,
\begin{equation}
\label{eq:tc-grid-survivor-path-tail}
    u^\beta(t_i,y)
    =
    \P\!\left(
        \sigma^{\beta}>t_i,\,
        X_{t_i}>y
    \right),
    \qquad
    i=0,\ldots,m.
\end{equation}
\end{proposition}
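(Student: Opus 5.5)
We prove \eqref{eq:tc-grid-active-upper-tail} by induction on \(i\), running it together with \eqref{eq:tc-no-open-layer-hit} on the layer \((t_{i-1},t_i)\). Write \(D_i(y):=u^\beta(t_i,y)-v^\xi(t_i,y)\) and \(P_i(y):=\P(\sigma^\xi\le t_i<\sigma^\beta,X_{t_i}>y)\).

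\emph{Base case and layers.} For \(i=0\), Proposition~\ref{prop:tc-direct-tail-recursion} gives \(D_0\equiv0\). We also need \(P_0\equiv 0\), that is, \(\sigma^\beta>t_0\) forces \(\sigma^\xi>t_0\). This holds because \(w^\beta=0\) on \([0,t_0]\), so by Lemma~\ref{lem::uvw} the barrier \(\Rc^\beta\) contains \([0,t_0]\times\R\). Hence \(\sigma^\beta=\sigma^\xi\) whenever \(\sigma^\xi\le t_0\).

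For the layer \((t_i,t_{i+1})\), Remark~\ref{rmk:tc-finite-contact-barrier} says \(\Rc^\beta_t\) is either \(\varnothing\) (when \(w^\beta(t)>0\)) or \(\R\) (when \(w^\beta(t)=0\)).
\begin{itemize}
\item If the set \(\{t\in(t_i,t_{i+1}):w^\beta(t)=0\}\) is nonempty, we must show that no path with \(\sigma^\xi\le t_i<\sigma^\beta\) survives to it. On such a set \(w^\beta(t)=w^\beta(t_i)+\xi((t_i,t])=0\), so \(w^\beta(t_i)=0\) and \(b_i=+\infty\). Then \(\Rc^\beta_{t_i}=\R\), and every path with \(\sigma^\xi\le t_i\) stops by \(t_i\).
\item Paths with \(\sigma^\xi\in(t_i,t)\) and \(w^\beta(t)=0\) stop at \(\sigma^\xi\) itself, by the same cylinder argument.
\end{itemize}
With the atomlessness of \(\xi\), this gives \(\P(t_i<\sigma^\beta<t_{i+1})=0\), which is \eqref{eq:tc-no-open-layer-hit}.

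\emph{Inductive step.} Assume \(D_{i-1}=P_{i-1}\). Split the event \(\{\sigma^\xi\le t_i<\sigma^\beta,X_{t_i}>y\}\) according to whether \(\sigma^\xi\le t_{i-1}\) or \(t_{i-1}<\sigma^\xi\le t_i\). By the no-hit property on \((t_{i-1},t_i)\), \(\sigma^\beta>t_i\) is equivalent to: \(\sigma^\beta>t_{i-1}\), no stop occurs in the open layer, and \(X_{t_i}>b_i\). The candidate stops in the layer come from \(\sigma^\xi\) landing in the interior, where the barrier is empty since \(w^\beta>0\) there. The Markov property at \(t_{i-1}\) and the switching identity (Lemma~\ref{lem::BM-switch}) then give
\[
P_i(y)=\E^y\!\left[P_{i-1}(Y_{t_i-t_{i-1}})\right]+\P(t_{i-1}<\sigma^\xi\le t_i,X_{t_i}>y)
\]
for \(y\ge b_i\). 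Combined with the induction hypothesis and \eqref{eq:tc-upper-tail-cap-direct}, this shows \(P_i(y)=D_i(y)\) on \(\{D_i<w^\beta(t_i)\}\), which is \((b_i,\infty)\) by the definition of \(b_i\) and monotonicity.

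For \(y\le b_i\) we have \(P_i(y)=P_i(b_i)\), since all mass below \(b_i\) is killed at \(t_i\). We also have \(D_i(y)=w^\beta(t_i)\). So it remains to show \(P_i(b_i)=w^\beta(t_i)\), and this comes from continuity of both sides at \(b_i\) from the right, using that \(X_{t_i}\) has no atoms.

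\emph{Consequence.} Formula \eqref{eq:tc-grid-survivor-path-tail} then follows by adding \(v^\xi(t_i,y)=\P(\sigma^\xi>t_i,X_{t_i}>y)\), noting that \(\sigma^\xi>t_i\) implies \(\sigma^\beta>t_i\) because \(\sigma^\beta\ge\sigma^\xi\).

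The main obstacle is the inductive step's pathwise bookkeeping. It must rigorously justify that the only stopping in \([t_{i-1},t_i]\) for paths alive after \(\sigma^\xi\) occurs at \(t_i\) below \(b_i\). This includes paths with \(\sigma^\xi\) inside the layer, which may hit a \(w^\beta=0\) cylinder, and handling this requires the lower semicontinuity of \(w^\beta\) from Remark~\ref{rmk:tc-finite-contact-barrier}.
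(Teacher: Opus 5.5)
Your proposal is correct and follows essentially the paper's proof. It uses the same induction, the same Markov/switching identity $\E^y[D_{i-1}(Y_{t_i-t_{i-1}})]=\P(\sigma^\xi\le t_{i-1}<\sigma^\beta,\,X_{t_i}>y)$ inserted into the min-recursion \eqref{eq:tc-upper-tail-cap-direct}, and the same split at $b_i$ based on the fact that paths with $\sigma^\xi\le t_i\le\sigma^\beta$ and $X_{t_i}\le b_i$ stop exactly at $t_i$. The differences are minor. You obtain the no-hit property on each open layer directly from $\Rc^\beta_{t_i}=\mathbb R$ when $w^\beta(t_i)=0$, instead of from the induction hypothesis via a $y\to-\infty$ limit; paths with $\sigma^\xi$ in the zero set of $w^\beta$ inside the layer are negligible because $\xi((t_i,t])=w^\beta(t)-w^\beta(t_i)=0$ there, not because $\xi$ is atomless, and no lower semicontinuity is needed. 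You also close at $b_i$ by right-continuity of $P_i$ together with continuity of $D_i$, rather than by the paper's continuity argument for $y\mapsto\P(\sigma^\xi\le t_i\le\sigma^\beta,X_{t_i}>y)$.
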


We first record the implication used in the induction step.

\begin{lemma} \label{lem:differ_atom}
Fix \(j\in\{0,\ldots,m\}\). If
\eqref{eq:tc-grid-active-upper-tail} holds at \(t_j\), then $\P\!\left(
        t_j<\sigma^{\beta}<t_{j+1}
    \right)
    =
    0$.
\end{lemma}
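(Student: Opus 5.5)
The plan is a pathwise argument. It combines the geometry of $\Rc^\beta$ on the open layer $(t_j,t_{j+1})$ from Remark~\ref{rmk:tc-finite-contact-barrier} with the total mass obtained from \eqref{eq:tc-grid-active-upper-tail} as $y\to-\infty$. Throughout, I use that $\Rc^\beta$ is closed and $X$ is continuous. Hence on $\{\sigma^\beta<\infty\}$ we have $(\sigma^\beta,X_{\sigma^\beta})\in\Rc^\beta$ and $\sigma^\beta\ge\sigma^\xi$. So on $\{t_j<\sigma^\beta<t_{j+1}\}$ the slice $\Rc^\beta_{\sigma^\beta}$ is non-empty. Since $\sigma^\beta\notin\Lambda$, Lemma~\ref{lem:tc-open-layer-contact} then forces $w^\beta(\sigma^\beta)=0$.

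\textbf{Step 1: describe $\{w^\beta=0\}$ on the layer.} Because $\beta$ has no mass in $(t_j,t_{j+1})$, for $t$ in this interval
\[
w^\beta(t)=w^\beta(t_j)+\xi((t_j,t]).
\]
This is non-decreasing in $t$. So either $w^\beta(t_j)>0$, and then $w^\beta>0$ on the whole layer, or $w^\beta(t_j)=0$, and then $w^\beta(t)=0$ exactly when $\xi((t_j,t])=0$.

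\textbf{Step 2: the case $w^\beta(t_j)>0$.} Here Step~1 and Lemma~\ref{lem:tc-open-layer-contact} give $\Rc^\beta_t=\varnothing$ for every $t\in(t_j,t_{j+1})$. By the opening observation, $\sigma^\beta$ cannot lie in this interval, so the probability is $0$.

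\textbf{Step 3: the case $w^\beta(t_j)=0$.} I split the event according to whether $\sigma^\xi\le t_j$ or $\sigma^\xi>t_j$.

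For the part with $\sigma^\xi\le t_j$, let $y\to-\infty$ in \eqref{eq:tc-grid-active-upper-tail}, which is assumed at $t_j$. Proposition~\ref{prop:tc-direct-tail-recursion} gives the limit $w^\beta(t_j)=0$ on the left. Monotone convergence gives $\P(\sigma^\xi\le t_j<\sigma^\beta)$ on the right. Hence this probability is $0$, and therefore $\P(\sigma^\xi\le t_j,\ t_j<\sigma^\beta<t_{j+1})=0$.

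For the part with $\sigma^\xi>t_j$, on $\{t_j<\sigma^\beta<t_{j+1}\}$ we have $w^\beta(\sigma^\beta)=0$, so $\xi((t_j,\sigma^\beta])=0$ by Step~1. Set $s^*:=\sup\{t<t_{j+1}:\xi((t_j,t])=0\}$. Then $\sigma^\xi\in(t_j,\sigma^\beta]\subseteq(t_j,s^*]\cap(t_j,t_{j+1})$. This set has $\xi$-measure $0$, using atomlessness of $\xi$ to handle the endpoint $s^*$. Since $\sigma^\xi\sim\xi$, this part has probability $0$.

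\textbf{Main obstacle.} The only delicate step is the justification in Step~3 that the limit $y\to-\infty$ of the left side of \eqref{eq:tc-grid-active-upper-tail} equals $w^\beta(t_j)$. For $j\ge1$ this is the limit statement in Proposition~\ref{prop:tc-direct-tail-recursion}. For $j=0$ the left side is identically $0$ and $w^\beta(t_0)=0$, so the limit is trivial. The remaining points are measure-theoretic and routine: the closedness of $\Rc^\beta$, so that the hitting point lies in the barrier, and atomlessness of $\xi$ at the endpoint $s^*$.
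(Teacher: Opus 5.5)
Your proof is correct and takes essentially the paper's route. Lemma~\ref{lem:tc-open-layer-contact} excludes hitting at layer times where $w^\beta>0$. On the initial interval $\{s\in(t_j,t_{j+1}):w^\beta(s)=0\}$ you combine $\P(\sigma^\xi\le t_j<\sigma^\beta)=w^\beta(t_j)=0$, obtained from \eqref{eq:tc-grid-active-upper-tail} as $y\to-\infty$, with the $\xi$-nullity of that interval; your split on $\sigma^\xi\le t_j$ versus $\sigma^\xi>t_j$ is exactly the paper's covering of $\{\sigma^\beta\text{ in that interval}\}$ by these two null events. Your explicit use of the closedness of $\Rc^\beta$ (Remark~\ref{rmk:tc-finite-contact-barrier}) to place $(\sigma^\beta,X_{\sigma^\beta})$ in the barrier makes precise a step the paper leaves implicit.
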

\begin{proof}
Let
$    Z_{j,j+1}
    :=
    \{s\in(t_j,t_{j+1}):w^\beta(s)=0\}.
$
Since \(\beta\) has no mass on \((t_j,t_{j+1})\), we have
    $w^\beta(s)
    =
    w^\beta(t_j)+\xi((t_j,s])$ for $s \in (t_j,t_{j+1})$, and hence \(Z_{j,j+1}\) is an initial interval of $(t_j,t_{j+1})$, possibly empty. We will deal with the two parts $Z_{j,j+1}$ and $(t_j,t_{j+1})\setminus Z_{j,j+1}$ separately.

We first consider $Z_{j,j+1}$. If it is empty, the argument is covered by the second part. If it is not empty, then
    $w^\beta(t_j)=0$, and $\xi(Z_{j,j+1})=0$.
Using the monotone convergence theorem and
\eqref{eq:tc-grid-active-upper-tail}, we get
$$
\begin{aligned}
    \P\!\left(
        \sigma^\xi\le t_j<\sigma^{\beta}
    \right)
    &=
    \lim_{y\to-\infty}
    \P\!\left(
        \sigma^\xi\le t_j<\sigma^{\beta},\,
        X_{t_j}>y
    \right)\\
    &=
    \lim_{y\to-\infty}
    \bigl[
        u^\beta(t_j,y)-v^\xi(t_j,y)
    \bigr]
    =
    w^\beta(t_j)
    =
    0.
\end{aligned}
$$
Because \(Z_{j,j+1}\) is an initial interval and
\(\sigma^{\beta}\ge\sigma^\xi\),
\[
\begin{aligned}
    \{\sigma^{\beta}\in Z_{j,j+1}\}
    \subseteq
    \{\sigma^\xi\le t_j<\sigma^{\beta}\}\cup
    \{\sigma^\xi\in Z_{j,j+1}\}.
\end{aligned}
\]
Since both events on the right-hand side have zero probability, we obtain $\P\!\left(
        \sigma^{\beta}\in Z_{j,j+1}
    \right)=0$.

Now consider the part \((t_j,t_{j+1})\setminus Z_{j,j+1}\). As \(w^\beta>0\),
Lemma~\ref{lem:tc-open-layer-contact} shows that \(\sigma^{\beta}\) cannot fall on this part. Consequently,
    $\P\!\left(
        t_j<\sigma^{\beta}<t_{j+1}
    \right)
    =
    0$.
Combining the two cases proves the claim.
\end{proof}

\begin{proof}[Proof of Proposition~\ref{prop:tc-decision-time-path-identification}]
We obtain \eqref{eq:tc-grid-active-upper-tail} by induction, and remark that \eqref{eq:tc-no-open-layer-hit} is a consequence of \eqref{eq:tc-grid-active-upper-tail} by Lemma \ref{lem:differ_atom}.
As \(w^\beta(t_0)=0\), 
Lemma~\ref{lem::uvw} implies that 
    $u^\beta(t_0,\cdot)-v^\xi(t_0,\cdot)=0$.
Moreover, from Remark \ref{rmk:tc-finite-contact-barrier}, we have
\(\Rc_s^\beta=\mathbb R\) for \(0\le s\le t_0\).
Hence
\(\sigma^{\beta}=\sigma^\xi\) on
\(\{\sigma^\xi\le t_0\}\), and therefore 
    $\P\!\left(
        \sigma^\xi\le t_0<\sigma^{\beta},\,
        X_{t_0}>y
    \right)
    =
    0$, which justifies \eqref{eq:tc-grid-active-upper-tail} at \(i=0\).

We now perform the induction step. Fix
\(i\in\{1,\ldots,m\}\) and assume that
\eqref{eq:tc-grid-active-upper-tail} holds at \(t_{i-1}\).
By Lemma~\ref{lem:differ_atom} with \(j=i-1\), we have
$\P\!\left(
        t_{i-1}<\sigma^{\beta}<t_i
    \right)
    =
    0$.
The event
    $\{\sigma^\xi\le t_{i-1}<\sigma^{\beta}\}$
belongs to \(\mathcal F^X_{t_{i-1}}\). Combining this fact and the induction hypothesis
with the switching identity gives
\[
\begin{aligned}
    \E^y\!\left[
        u^\beta\!\left(
            t_{i-1},Y_{t_i-t_{i-1}}
        \right)
        -
        v^\xi\!\left(
            t_{i-1},Y_{t_i-t_{i-1}}
        \right)
    \right]=
    \P\!\left(
        \sigma^\xi\le t_{i-1}<\sigma^{\beta},\,
        X_{t_i}>y
    \right).
\end{aligned}
\]
Consequently,
\begin{equation}
\label{eq:tc-precut-path-identity}
\begin{aligned}
    &\quad\,\,\E^y\!\left[
        u^\beta\!\left(
            t_{i-1},Y_{t_i-t_{i-1}}
        \right)
        -
        v^\xi\!\left(
            t_{i-1},Y_{t_i-t_{i-1}}
        \right)
    \right]
    +
    \P\!\left(
        t_{i-1}<\sigma^\xi\le t_i,\,
        X_{t_i}>y
    \right)\\
    &=
    \P\!\left(
        \sigma^\xi\le t_i\le\sigma^{\beta},\,
        X_{t_i}>y
    \right).
\end{aligned}
\end{equation}
Combining \eqref{eq:tc-precut-path-identity} with
\eqref{eq:tc-upper-tail-cap-direct}, we have
\begin{equation} \label{eq:mini_eq}
\begin{aligned}
    u^\beta(t_i,y)-v^\xi(t_i,y)=
    \min\Bigl\{
        w^\beta(t_i),\,
        \P\!\left(
            \sigma^\xi\le t_i\le\sigma^{\beta},\,
            X_{t_i}>y
        \right)
    \Bigr\}.
\end{aligned}
\end{equation}
We distinguish two cases below.

\textbf{Case 1.} First we assume \(w^\beta(t_i)=0\). From \eqref{eq:mini_eq}, we have $u^\beta(t_i,y)-v^\xi(t_i,y)=0$ for $y\in\mathbb R$,
and hence \(\Rc_{t_i}^\beta=\mathbb R\). On the event $
    \{\sigma^\xi\le t_i\le\sigma^{\beta}\}$, we have $(t_i,X_{t_i})\in\Rc^\beta$ and hence $\sigma^{\beta}\le t_i$. As the same event also implies
$t_i\le\sigma^{\beta}$, it follows that
\(\sigma^{\beta}=t_i\). Therefore
\[
    \P\!\left(
        \sigma^\xi\le t_i<\sigma^{\beta},\,
        X_{t_i}>y
    \right)
    =
    0
    =
    u^\beta(t_i,y)-v^\xi(t_i,y).
\]
Hence \eqref{eq:tc-grid-active-upper-tail} is valid.

\textbf{Case 2.} Now suppose that \(w^\beta(t_i)>0\). By Proposition~\ref{prop:tc-direct-tail-recursion}, we have
    $b_i\in\mathbb R$ and $\Rc_{t_i}^\beta=(-\infty,b_i]$. From \eqref{eq:mini_eq} we know that $\P\!\left(
        \sigma^\xi\le t_i\le\sigma^{\beta},\,
        X_{t_i}>y
    \right)$ is at least \(w^\beta(t_i)\) for
\(y\le b_i\), and strictly less than \(w^\beta(t_i)\) for
\(y>b_i\). By the continuity of the map
    $y
    \longmapsto
    \P\!\left(
        \sigma^\xi\le t_i\le\sigma^{\beta},\,
        X_{t_i}>y
    \right)$, it follows that
$$
    \P\!\left(
        \sigma^\xi\le t_i\le\sigma^{\beta},\,
        X_{t_i}>b_i
    \right)
    =
    w^\beta(t_i).
$$
As
    $\{\sigma^\xi\le t_i<\sigma^{\beta}\}
    =
    \{
        \sigma^\xi\le t_i\le\sigma^{\beta},\,
        X_{t_i}>b_i
    \}$ almost surely,
we have, for every \(y\in\mathbb R\),
$$
\begin{aligned}
    \P\!\left(
        \sigma^\xi\le t_i<\sigma^{\beta},\,
        X_{t_i}>y
    \right)
    =
    \begin{cases}
        w^\beta(t_i),
            &y\le b_i,\\[1mm]
        \P\!\left(
            \sigma^\xi\le t_i\le\sigma^{\beta},\,
            X_{t_i}>y
        \right),
            &y>b_i.
    \end{cases}
\end{aligned}
$$
By \eqref{eq:mini_eq}, the right-hand side equals
\(u^\beta(t_i,y)-v^\xi(t_i,y)\) in both cases. This proves
\eqref{eq:tc-grid-active-upper-tail} at \(t_i\) and closes the
induction.


Finally, since $\sigma^{\beta}\ge\sigma^\xi$, the event $\{\sigma^{\beta}>t_i\}
    =
    \{\sigma^\xi>t_i\}
    \mathbin{\cup}
    \{\sigma^\xi\le t_i<\sigma^{\beta}\}$ is the disjoint union of the two events on the right-hand side.
By \eqref{eq:tc-grid-active-upper-tail}, we have
$$
    \P\!\left(
        \sigma^{\beta}>t_i,\,
        X_{t_i}>y
    \right)
    =
    v^\xi(t_i,y)
    +
    u^\beta(t_i,y)-v^\xi(t_i,y)
   =
    u^\beta(t_i,y).
$$
This proves \eqref{eq:tc-grid-survivor-path-tail}.
\end{proof}

We end this section with the proof of Proposition~\ref{prop:tc-grid-value-contact}.
\begin{proof}[Proof of Proposition~\ref{prop:tc-grid-value-contact}]
We split the proof into three cases.

\textbf{Case 1.} $t=t_i$ for some $i=0,1,\ldots,m$. In this case, \eqref{eq:tc-grid-value-direct} follows directly from Proposition~\ref{prop:tc-decision-time-path-identification}.

\textbf{Case 2.} $t\notin\Lambda$ and $w^\beta(t)=0$. From Remark~\ref{rmk:tc-finite-contact-barrier}, $\Rc_t^\beta=\mathbb R$.
By definition, $\sigma^{\beta}\ge\sigma^\xi$. On the other hand, $w^\beta(t)=0$ together with $\sigma^\xi\le t$ implies $\sigma^{\beta}\le t$.
Consequently,
$$
    \{\sigma^{\beta}>t\}
    =
    \{\sigma^\xi>t\}.
$$
By Lemma~\ref{lem::uvw}, $u^\beta(t,\cdot)=v^\xi(t,\cdot)$, and hence
$$
    u^\beta(t,x)
    =
    \P(\sigma^{\beta}>t,X_t>x).
$$

\textbf{Case 3.} $t\notin\Lambda$ and $w^\beta(t)>0$. In particular, there exists $i \in \{0,1,\ldots,m\}$ such that
   $t_i<t<t_{i+1}$, and $w^\beta(t)>0$. By \eqref{eq:tc-no-open-layer-hit}, we have $\{\sigma^{\beta}>t\}
    =
    \{\sigma^{\beta}>t_i\}$ almost surely. Now combining \eqref{eq:tc-open-layer-value}, \eqref{eq:tc-grid-survivor-path-tail}, and
the switching identity, we obtain
$$
    u^\beta(t,x)
    =
    \E^x\!\left[
        u^\beta(t_i,Y_{t-t_i})
    \right]=
    \P(
        \sigma^{\beta}>t_i,\,
        X_t>x
    )=
    \P(
        \sigma^{\beta}>t,\,
        X_t>x
    ).
$$
Combining the three cases gives $u^\beta=v^\beta$. For $i=0,\ldots,m$, monotone convergence in
\eqref{eq:tc-grid-active-upper-tail} and Proposition~\ref{prop:tc-direct-tail-recursion} give
$$
\P(\sigma^\xi\le t_i<\sigma^\beta)
=
\lim_{y\to-\infty}
\bigl[u^\beta(t_i,y)-v^\xi(t_i,y)\bigr]
=
w^\beta(t_i).
$$
Since $\sigma^\beta\ge\sigma^\xi$, we have $\P(\sigma^\beta>t_i) = g^\xi(t_i)+w^\beta(t_i) = g^\beta(t_i)$.
If $t\notin\Lambda$ and $w^\beta(t)=0$, Case~2 gives $\P(\sigma^\beta>t)=\P(\sigma^\xi>t)=g^\xi(t)=g^\beta(t)$.
If $t_i<t<t_{i+1}$ and $w^\beta(t)>0$, then \eqref{eq:tc-no-open-layer-hit} and $\beta((t_i,t])=0$ give
$\P(\sigma^\beta>t)=\P(\sigma^\beta>t_i)=g^\beta(t_i)=g^\beta(t)$.
Thus \(\P(\sigma^\beta>t)=g^\beta(t)\) for all \(t\ge0\), and $\sigma^\beta\sim\beta$.

\end{proof}

\subsection{Proof of Theorem~\ref{thm::main} in the general case}
\label{sec:general_case}

In this section, we prove Theorem~\ref{thm::main} in the general case using the approximation arguments and the results of Section~\ref{sec:finite_support}. Technical results related to compactness and stability are collected in
Appendix~\ref{app:lower-barrier-compactness}.

\subsubsection{Dyadic approximation}

For \(m\ge1\), set $\pi_m^-(t)
    :=
    2^{-m}\lfloor2^m t\rfloor$ and
    $g_m(t)
    :=
    g^\beta\bigl(\pi_m^-(t)\bigr)$.
The function $g_m$ is clearly a survival function. Let
\(\beta^m\) be the corresponding probability measure on \(\mathbb R_+\), and introduce $w_m:=g_m-g^\xi$.

\begin{lemma}
\label{lem:canonical-upper-approximation}
For every \(m\geq 1\), the measure \(\beta^m\) is supported on the set of dyadic points $\{k\,2^{-m}:k\in\mathbb N_0\}$,
and this set meets every bounded interval in finitely many points.
Moreover, we have
$$
    \xi
    \le_{\mathrm{st}}
    \beta
    \le_{\mathrm{st}}
    \beta^m.
$$
For every \(T<\infty\), let
$\varepsilon_m(T)
    :=
    \sup_{0\le t\le T}
    \bigl(g_m(t)-g^\beta(t)\bigr)$.
It follows that $\varepsilon_m(T)\to0$ as $m\to\infty$.
In particular, $\beta^m$ converges to $\beta$ weakly.
\end{lemma}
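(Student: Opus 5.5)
The argument is elementary and works directly with survival functions. I will verify the four claims in order: $g_m$ is a survival function, the support of $\beta^m$, the stochastic order, and uniform convergence on compacts.

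\emph{$g_m$ is a survival function, and the support of $\beta^m$.} The map $\pi_m^-$ is non-decreasing and right-continuous, being constant on each $[k2^{-m},(k+1)2^{-m})$. Composed with the non-increasing, right-continuous $g^\beta$, it gives a non-increasing, right-continuous $g_m$ with values in $[0,1]$. We also have $g_m(t)\to0$ as $t\to\infty$, because $\pi_m^-(t)\to\infty$.

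Since $g_m$ is constant on each $[k2^{-m},(k+1)2^{-m})$, the measure $\beta^m$ charges no open interval $(k2^{-m},(k+1)2^{-m})$. Its only possible atoms are the points $k2^{-m}$ with $k\ge1$ (a possible atom at $0$ also lies in the dyadic set). The dyadic set obviously meets every bounded interval in finitely many points.

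\emph{Stochastic order.} The relation $\xi\le_{\mathrm{st}}\beta$ is a hypothesis. For $\beta\le_{\mathrm{st}}\beta^m$, note that $\pi_m^-(t)\le t$ and $g^\beta$ is non-increasing, so $g_m(t)=g^\beta(\pi_m^-(t))\ge g^\beta(t)$ for every $t$.

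\emph{Uniform convergence.} This is the step requiring care, because $g^\beta$ need not be continuous in general. However, in Theorem~\ref{thm::main} $\beta$ is assumed atomless, so $g^\beta$ is continuous on $[0,\infty)$. Hence $g^\beta$ is uniformly continuous on $[0,T]$. Since $0\le t-\pi_m^-(t)<2^{-m}$, we get
\[
0\le g_m(t)-g^\beta(t)=g^\beta(\pi_m^-(t))-g^\beta(t)\le \omega_T(2^{-m}),\qquad 0\le t\le T,
\]
where $\omega_T$ is the modulus of continuity of $g^\beta$ on $[0,T]$. Therefore $\varepsilon_m(T)\le\omega_T(2^{-m})\to0$.

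If one wanted to avoid the atomless assumption, the argument would only give pointwise convergence $g_m(t)\to g^\beta(t)$, by right-continuity, since $\pi_m^-(t)\uparrow t$ with $\pi_m^-(t)\le t$. So I would explicitly invoke atomlessness of $\beta$ here.

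\emph{Weak convergence.} This follows at once. We have $g_m(t)\to g^\beta(t)$ for every $t$, so the distribution functions $1-g_m$ converge to $1-g^\beta$ at every continuity point (in fact everywhere). By the standard characterization of weak convergence on $\R_+$, $\beta^m\to\beta$ weakly.
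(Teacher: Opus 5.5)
Your proposal is correct and follows essentially the same route as the paper: constancy of $g_m$ on the dyadic intervals for the support, $\pi_m^-(t)\le t$ together with monotonicity of $g^\beta$ for the stochastic order, and uniform continuity of $g^\beta$ on $[0,T]$ for $\varepsilon_m(T)\to0$, from which weak convergence follows. You rightly note that this uniform continuity rests on $\beta$ being atomless, which the paper uses only implicitly; your checks that $g_m$ is a genuine survival function and your explicit distribution-function argument for weak convergence fill in details the paper leaves unstated.
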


\begin{proof}
The function \(g_m\) is constant on all intervals $[k2^{-m},(k+1)2^{-m})$, $k=0,1,2,\ldots$.
Hence \(\beta^m\) is supported on the set of dyadic points
$\{k\,2^{-m}:k=0,1,2,\ldots\}$.

As \(\pi_m^-(t)\le t\) and \(g^\beta\) is non-increasing, $g_m(t)
    =
    g^\beta\bigl(\pi_m^-(t)\bigr)
    \ge
    g^\beta(t)
    \ge
    g^\xi(t)$,
and hence the stochastic ordering holds.

Finally, noting that
    $0
    \le
    g_m(t)-g^\beta(t)
    =
    g^\beta\bigl(\pi_m^-(t)\bigr)-g^\beta(t)
$
and
$
    0\le t-\pi_m^-(t)<2^{-m},
$
by uniform continuity of \(g^\beta\) on compact intervals, we obtain
\(\varepsilon_m(T)\to0\) for every \(T\ge0\). Thus $\beta^m$ converges to $\beta$ weakly.
\end{proof}

For the target \(\beta^m\), define
  $$  u_m:=u^{\beta^m},
    \qquad
    \Rc_m:=\Rc^{\beta^m},
\qquad
    \sigma_m
    :=
    \inf\{t\ge\sigma^\xi:(t,X_t)\in\Rc_m\}.
$$

\begin{lemma}
\label{lem:atomic-barrier-identities}
For every \(m\geq 1\), the set \(\Rc_m\) is a closed lower
barrier, and \(\sigma_m\) is an
\((\mathcal F_t^X)\)-stopping time. Moreover, we have $\sigma_m\sim\beta^m$, and
\[
    u_m(t,x)
    =
    \P(\sigma_m>t,\ X_t>x),
    \qquad t\ge0,\quad x\in\mathbb R.
\]
Consequently,
   $ u_m(t,x)-v^\xi(t,x)
    =
    \P\left(
        \sigma^\xi\le t<\sigma_m,\ X_t>x
    \right)$.
\end{lemma}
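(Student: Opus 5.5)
The plan is to deduce the lemma from Proposition~\ref{prop:tc-grid-value-contact} and Remark~\ref{rmk:tc-finite-contact-barrier}, applied with \(\beta\) replaced by \(\beta^m\) and \(\xi\) and \(\sigma^\xi\) kept fixed. The only work is to check that the pair \((\xi,\beta^m)\) satisfies the standing hypotheses of Section~\ref{sec:finite_support} and Assumption~\ref{assu:finite_supp}, with the degenerate cases handled separately.

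\textbf{Standing hypotheses.} Lemma~\ref{lem:canonical-upper-approximation} gives \(\xi\le_{\mathrm{st}}\beta^m\). Since \(\xi\) is atomless and \(\sigma^\xi\sim\xi\), we have \(\P(\sigma^\xi=0)=0\), so \(\sigma^\xi>0\) a.s. Also \(g_m(0)=g^\beta(0)=1\), because \(\beta\) is atomless. Hence \(\beta^m(\{0\})=0\) and \(g^{\beta^m}(0)=1\).

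\textbf{Case \(\beta^m=\xi\).} Then \(w_m\equiv0\), and Lemma~\ref{lem::uvw} gives \(u_m=v^\xi\). Every \((t,x)\) lies in \(\Rc_m\), so \(\Rc_m=[0,\infty)\times\R\) is trivially a closed lower barrier. It follows that \(\sigma_m=\sigma^\xi\sim\xi=\beta^m\), and all claims hold.

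\textbf{Case \(\beta^m\neq\xi\): the structural part of Assumption~\ref{assu:finite_supp}.}
\begin{itemize}
\item Since \(\beta^m\) is purely atomic and \(\xi\) is atomless, they differ on some \((0,t)\), so \(\ell:=\ell^{\beta^m}_\xi<\infty\).
\item We need \(r:=r^{\beta^m}_\xi<\infty\). Here \(w_m(t)\le g_m(t)=g^\beta(\pi_m^-(t))\to0\), but this alone does not force \(w_m(t)=0\) for large \(t\). If \(\beta\) has unbounded support, then \(r=\infty\) and the assumption fails.
\end{itemize}

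I expect this tail issue to be the main obstacle. Two routes are available.

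\emph{Route 1: truncation.} Run the finite-support argument on \([0,T]\). The recursion in Proposition~\ref{prop:tc-direct-tail-recursion} and the layer analysis of Lemma~\ref{lem:tc-open-layer-contact} and Proposition~\ref{prop:tc-decision-time-path-identification} only use the grid points \(t_i\le t\). Inspecting those proofs shows that \(r<\infty\) is used only to make \(\Lambda\cap[0,T]\) finite. Since the dyadic grid meets every bounded interval in finitely many points (Lemma~\ref{lem:canonical-upper-approximation}), the induction over \(i\) runs through all grid points and gives every conclusion at every finite \(t\).

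\emph{Route 2: reduction to the finite case.} Alternatively, observe that \(\Lambda=\{\ell\}\cup\{\text{atoms of }\beta^m\text{ in }[\ell,r)\}\) is locally finite. So for each \(T\), the quantities \(u_m(t,\cdot)\), \(\Rc_m\cap([0,T]\times\R)\), and the law of \(\sigma_m\wedge T\) coincide with those obtained from a modified target \(\beta^{m,T}\). This target agrees with \(\beta^m\) on \([0,T]\) and has its remaining mass placed so that \(w\) vanishes after a finite time. The key point is that \(u^\beta(t,\cdot)\) depends only on \(g^\beta|_{[0,t]}\), by the definition as an infimum over \(\tau\in\TT^t\).

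Route 1 is the one I would write, stating explicitly that local finiteness suffices. Route 2 is the fallback if one prefers to cite Proposition~\ref{prop:tc-grid-value-contact} verbatim.

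\textbf{Conclusion.} Given these hypotheses, Proposition~\ref{prop:tc-grid-value-contact} yields \(u_m(t,x)=\P(\sigma_m>t,X_t>x)\) and \(\sigma_m\sim\beta^m\). Remark~\ref{rmk:tc-finite-contact-barrier} shows \(\Rc_m\) is a closed lower barrier; with infinitely many grid points, closedness still holds because the union is locally finite. Then \(\sigma_m\) is the first entrance after the stopping time \(\sigma^\xi\) of the continuous process \((t,X_t)\) into a closed set, and is therefore an \((\mathcal F_t^X)\)-stopping time.

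Finally, write \(\{\sigma_m>t\}\) as the disjoint union of \(\{\sigma^\xi>t\}\) and \(\{\sigma^\xi\le t<\sigma_m\}\), which is valid since \(\sigma_m\ge\sigma^\xi\). Subtracting \(v^\xi(t,x)=\P(\sigma^\xi>t,X_t>x)\) then gives the last identity.
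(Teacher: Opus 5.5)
Your proposal is correct. Your fallback Route~2 is essentially the paper's proof. The paper fixes \(T\) and sets \(g_m^T:=g_m\) on \([0,T)\) and \(g_m^T:=g^\xi\) on \([T,\infty)\). This is a survival function because \(g_m(T-)\ge g^\xi(T)\), with an atom at \(T\). The paper applies Proposition~\ref{prop:tc-grid-value-contact} and Remark~\ref{rmk:tc-finite-contact-barrier} verbatim to the resulting \(\beta^{m,T}\). It then uses your observation that \(u^\beta(t,\cdot)\) depends only on \(g^\beta|_{[0,t]}\), which gives \(u_m^T=u_m\) and equal barriers on \([0,T)\times\R\), and hence \(\sigma_m^T\wedge T=\sigma_m\wedge T\). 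Closedness of \(\Rc_m\) follows by taking \(T\) beyond any limit point.

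Your preferred Route~1 is genuinely different: it reopens Section~\ref{sec:finite_support} and asserts that every argument there survives with an infinite but locally finite \(\Lambda\) and \(r^{\beta^m}_\xi=\infty\). That assertion is true. Only finitely many grid points enter any argument on a bounded time interval, the special index \(t_{m+1}=r\) and the region \([r,\infty)\) where \(w=0\) simply disappear, and the finite union in the Remark becomes a locally finite union, which is still closed. But written out, Route~1 re-proves a stronger version of the finite-support results, so each lemma must be audited rather than cited.

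The paper's truncation buys exactly that economy: the finite-support results are used as black boxes. The modest cost is checking that \(\beta^{m,T}\) is an admissible target. This means \(\xi\le_{\mathrm{st}}\beta^{m,T}\), a finite first moment, and \(\beta^{m,T}|_{I}\) finitely supported, which holds because \(r\le T\) puts the atom at \(T\) outside \(I=[\ell,r)\).

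Your separate case \(\beta^m=\xi\) is vacuous, as you implicitly note, since \(\beta^m\) is purely atomic while \(\xi\) is atomless.
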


\begin{proof}
Fix \(m\) and \(T>0\), and define
\[
    g_m^T(s)
    :=
    \begin{cases}
        g_m(s),&0\le s<T,\\
        g^\xi(s),&s\ge T.
    \end{cases}
\]
It is clear that \(g_m^T\) is non-increasing and right-continuous. At \(T\), its downward jump has size
$g_m(T-)-g^\xi(T)\ge0$. Moreover, $g_m^T(0)=1$ and $\lim_{s\to\infty}g_m^T(s)=0$.
Hence \(g_m^T\) is the survival function of some probability measure,
which we denote by \(\beta^{m,T}\). As $g_m^T(t)\ge g^\xi(t)$ for $t\ge0$, we have
$\xi\le_{\mathrm{st}}\beta^{m,T}$.
In addition, \(\beta^{m,T}\) has a finite first moment, since
$$
    \int_0^\infty g_m^T(s)\,\d s
    \le
    T+\int_T^\infty g^\xi(s)\,\d s
    <
    \infty.
$$
We introduce
$$
    w_m^T:=g_m^T-g^\xi,
    \quad
    u_m^T:=u^{\beta^{m,T}},
    \quad
    \Rc_m^T:=\Rc^{\beta^{m,T}},
    \quad
    \sigma_m^T := \inf\{t\ge\sigma^\xi:(t,X_t)\in\Rc_m^T\}.
$$
By Remark~\ref{rmk:tc-finite-contact-barrier} and
Proposition~\ref{prop:tc-grid-value-contact}, \(\Rc_m^T\) is a closed lower barrier,
$
    \sigma_m^T\sim\beta^{m,T}
$,
and
$
    u_m^T(t,x)
    =
    \P(\sigma_m^T>t,\ X_t>x)
   $
for all $t \geq 0$ and $x \in \R$.


Next, we relate the localized and original optimal stopping problems.
For every \(0\le t<T\), as
    $w_m^T\big|_{[0,t]}
    =
    w_m\big|_{[0,t]}$,
we have
$
    u_m^T(t,x)=u_m(t,x)
$.
It follows that
\[
    \Rc_m^T\cap([0,T)\times\mathbb R)
    =
    \Rc_m\cap([0,T)\times\mathbb R).
\]
Since the barriers agree before $T$, both hitting times agree up to time $T$, i.e.,
   $ \sigma_m^T\wedge T
    =
    \sigma_m\wedge T$.

We now pass from the localized barriers to $\Rc_m$. Fix \(t\ge0\) and \(T>t\). To prove the closedness, let
\((t_k,x_k)\in\Rc_m\) converge to \((t,x)\).
Then \(t_k<T\) for all sufficiently large \(k\), so
\((t_k,x_k)\in\Rc_m^T\). Since \(\Rc_m^T\) is closed,
\((t,x)\in\Rc_m^T\), and \(t<T\) gives
\((t,x)\in\Rc_m\). Thus \(\Rc_m\) is a closed lower
barrier. It is clear that \(\sigma_m\) is a stopping time.

It remains to identify its law and value function. Fix \(t\ge0\) and
\(x\in\mathbb R\), and choose \(T>t\). The localized identities and $\sigma_m^T \wedge T = \sigma_m \wedge T$ give
\[
    u_m(t,x)
    =
    \P(\sigma_m>t,\ X_t>x),
    \qquad
    \P(\sigma_m>t)=g_m(t).
\]
Finally, since \(\sigma_m\ge\sigma^\xi\), we have
\[
    u_m(t,x)-v^\xi(t,x)
    =
    \P(\sigma_m>t,\ X_t>x)
    -
    \P(\sigma^\xi>t,\ X_t>x)
   =
    \P\left(
        \sigma^\xi\le t<\sigma_m,\ X_t>x
    \right).
\]
\end{proof}

\begin{lemma}
\label{lem:atomic-value-stability}
For every \(m\ge1\) and \(T<\infty\), we have
\[
    0
    \le
    u_m(t,x)-u^\beta(t,x)
    \le
    \varepsilon_m(T),
    \qquad
    0\le t\le T,\quad x\in\mathbb R.
\]
In particular,
\[
    \sup_{\substack{0\le t\le T\\x\in\mathbb R}}
    \left|
        \bigl[u_m(t,x)-v^\xi(t,x)\bigr]
        -
        \bigl[u^\beta(t,x)-v^\xi(t,x)\bigr]
    \right|
    \le
    \varepsilon_m(T)
    \overset{m \to \infty}{\longrightarrow}0.
\]
\end{lemma}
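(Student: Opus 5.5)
The plan is to compare the two optimal stopping problems directly through their reward functionals, without using any structure of the barriers. For a fixed horizon $t$ and every $\tau\in\TT^t$, the functionals defining $u_m(t,x)$ and $u^\beta(t,x)$ are
\[
    J^m_{t,x}(\tau)
    :=
    \E^x\!\left[v^\xi(t-\tau,Y_\tau)+w_m(t-\tau)\mathbf 1_{\{\tau<t\}}\right],
    \qquad
    J_{t,x}(\tau)
    :=
    \E^x\!\left[v^\xi(t-\tau,Y_\tau)+w^\beta(t-\tau)\mathbf 1_{\{\tau<t\}}\right].
\]
Both use the same $v^\xi$, since $\sigma^\xi$ is unchanged. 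They differ only in the term
\[
    \bigl(w_m-w^\beta\bigr)(t-\tau)\mathbf 1_{\{\tau<t\}}
    =
    \bigl(g_m-g^\beta\bigr)(t-\tau)\mathbf 1_{\{\tau<t\}}.
\]

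First I control this difference pointwise. Since $\tau\in[0,t]$ and $t\le T$, the calendar time $t-\tau$ lies in $[0,T]$. By Lemma~\ref{lem:canonical-upper-approximation}, $0\le g_m-g^\beta\le\varepsilon_m(T)$ on $[0,T]$. Hence, almost surely,
\[
    0\le \bigl(w_m-w^\beta\bigr)(t-\tau)\mathbf 1_{\{\tau<t\}}\le\varepsilon_m(T).
\]
Taking expectations gives
\[
    J_{t,x}(\tau)\le J^m_{t,x}(\tau)\le J_{t,x}(\tau)+\varepsilon_m(T)
    \qquad\text{for every }\tau\in\TT^t.
\]
All quantities are bounded, since $v^\xi\in[0,1]$ and $w\in[0,1]$, so each expectation is finite.

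Next I take the infimum over $\tau\in\TT^t$ in both inequalities. This yields
\[
    u^\beta(t,x)\le u_m(t,x)\le u^\beta(t,x)+\varepsilon_m(T),
\]
which is the first claim. The infimum is monotone and commutes with adding a constant, so no attainment of the optimum is needed. The second display then follows by subtracting the common term $v^\xi(t,x)$, so the two differences in absolute value are exactly $u_m-u^\beta$. I then take the supremum over $0\le t\le T$ and $x\in\R$; the bound is uniform because $\varepsilon_m(T)$ depends on neither $t$ nor $x$. Finally, $\varepsilon_m(T)\to0$ by Lemma~\ref{lem:canonical-upper-approximation}.

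The only point needing care is that the horizon restriction $t\le T$ confines $t-\tau$ to $[0,T]$, so that the $T$-dependent uniform bound applies. There is no real obstacle beyond this.
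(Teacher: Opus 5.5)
Your proposal is correct and follows essentially the same argument as the paper: compare the two reward functionals at the same $\tau\in\TT^t$, bound their difference $\E^x\bigl[(g_m-g^\beta)(t-\tau)\mathbf 1_{\{\tau<t\}}\bigr]$ in $[0,\varepsilon_m(T)]$ via Lemma~\ref{lem:canonical-upper-approximation}, and take the infimum. The second display then follows because $v^\xi$ does not depend on $m$.
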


\begin{proof}
Fix $T<\infty$, $(t,x) \in [0, T] \times \R$, and $\tau \in \TT^t$. By Lemma~\ref{lem:canonical-upper-approximation}, the payoffs defining $u_m(t,x)$ and $u^\beta(t,x)$, evaluated at the same $\tau$, differ by
\[
    \E^x\left[
        \bigl(
            g_m(t-\tau)-g^\beta(t-\tau)
        \bigr)
        \mathbf 1_{\{\tau<t\}}
    \right]
    \in
    [0,\varepsilon_m(T)].
\]
Taking the infimum over \(\tau\in\TT^t\) proves the first assertion. The second follows because $v^\xi$ is independent of $m$.
\end{proof}

\subsubsection{Limiting arguments}

In this section, we use the preceding approximation results and pass to the limit.
For each \(m\), let
\(
    b_m:\mathbb R_+\longrightarrow[-\infty,\infty]
\)
be the upper semicontinuous boundary of \(\Rc_m\), so that
\(
    \Rc_m
    =
    \{(t,x):x\le b_m(t)\}.
\)
\begin{proposition}
\label{prop:general-limiting-barrier}
There exists a subsequence, still denoted by $(b_m)$, and an upper semicontinuous \(b:\mathbb R_+\to[-\infty,\infty]\) such that
\(b_{m}\xrightarrow{\Gamma^+}b\). Let
\[
    \widetilde\Rc:=\{(t,x):x\le b(t)\},
    \qquad
    \widehat\sigma:=
    \inf\{t\ge\sigma^\xi:(t,X_t)\in\widetilde\Rc\}.
\]
Then \(\widetilde\Rc\) is a closed lower barrier, and
    $\sigma_{m}\longrightarrow\widehat\sigma$ almost surely.
Moreover, we have
\begin{equation}
\label{eq:limiting-law-value}
    \widehat\sigma\sim\beta,
    \qquad
    u^\beta(t,x)
    =
    \P\left(\widehat\sigma>t,\ X_t>x\right),
    \quad
    \text{for all }t\ge0, \ x\in\mathbb R,
\end{equation}
and
\begin{equation}
\label{eq:limiting-active-upper-tail}
    u^\beta(t,x)-v^\xi(t,x)
    =
    \P\left(
        \sigma^\xi\le t<\widehat\sigma,\ X_t>x
    \right),
    \quad
    \text{for all }t\ge0,\ x\in\mathbb R.
\end{equation}
\end{proposition}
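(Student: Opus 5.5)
The plan is to use the compactness and stability results collected in Appendix~\ref{app:lower-barrier-compactness} for closed lower barriers under $\Gamma^+$-convergence, in the spirit of the Root-barrier arguments of \cite{COT}. Each $b_m$ is upper semicontinuous with values in $[-\infty,\infty]$. The space of such functions, equivalently of closed lower barriers, is compact for the Hausdorff-type topology on closed subsets of $[0,\infty]\times[-\infty,\infty]$. We therefore extract a subsequence with $b_m\xrightarrow{\Gamma^+}b$, where $b$ is upper semicontinuous, so $\widetilde\Rc$ is automatically a closed lower barrier.

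\textbf{Convergence of hitting times.} The next step is $\sigma_m\to\widehat\sigma$ almost surely, and we split it into a lower and an upper bound. For the lower bound $\liminf\sigma_m\ge\widehat\sigma$, suppose $\sigma_m\to s<\infty$ along a further subsequence. Then $(\sigma_m,X_{\sigma_m})\in\Rc_m$ and $\sigma_m\ge\sigma^\xi$. By $\Gamma^+$-convergence (the $\limsup$ inequality $\limsup b_m(t_m)\le b(t)$ for $t_m\to t$), we get $(s,X_s)\in\widetilde\Rc$ with $s\ge\sigma^\xi$, hence $\widehat\sigma\le s$. For the upper bound $\limsup\sigma_m\le\widehat\sigma$, we use the recovery-sequence property: for $(t,b(t))$ there are $t_m\to t$ with $b_m(t_m)\to b(t)$. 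We combine this with the path regularity of Brownian motion. Almost surely, after $\widehat\sigma$ the path immediately enters the interior of $\widetilde\Rc$ in the space direction at a time in a small neighbourhood, by the law of the iterated logarithm applied at the strong Markov time $\widehat\sigma$. Hence it crosses strictly below $b_m$ at nearby times for large $m$, and the same holds after $\sigma^\xi$.

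This upper bound is the main obstacle. It requires a careful argument at times where $b$ is only upper semicontinuous, and on the events where $\widehat\sigma=\sigma^\xi$ or where $b(t)=\pm\infty$. I would invoke the appendix stability lemma, which is presumably stated as \lq\lq$\Gamma^+$-convergence of barriers implies a.s.\ convergence of hitting times after a fixed stopping time\rq\rq, and verify its hypothesis that $X$ regularly enters $\widetilde\Rc$ once $\sigma^\xi$ has occurred.

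\textbf{Identification of the limit.} With a.s.\ convergence, $\P(\sigma_m>t)\to\P(\widehat\sigma>t)$ at every continuity point $t$ of the law of $\widehat\sigma$. Lemma~\ref{lem:atomic-barrier-identities} gives $\P(\sigma_m>t)=g_m(t)$, and Lemma~\ref{lem:canonical-upper-approximation} gives $g_m(t)\to g^\beta(t)$. Right-continuity then yields $\widehat\sigma\sim\beta$. Since $\beta$ is atomless, $\P(\widehat\sigma=t)=0$ for every $t$, and consequently $\mathbf 1_{\{\sigma_m>t\}}\to\mathbf 1_{\{\widehat\sigma>t\}}$ a.s. Also $\P(X_t=x)=0$ for $t>0$, so dominated convergence gives
\[
\P(\sigma_m>t,X_t>x)\to\P(\widehat\sigma>t,X_t>x).
\]
By Lemma~\ref{lem:atomic-barrier-identities}, the left-hand side equals $u_m(t,x)$, which tends to $u^\beta(t,x)$ by Lemma~\ref{lem:atomic-value-stability}; this proves \eqref{eq:limiting-law-value}. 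The case $t=0$ is trivial since $\sigma^\xi>0$. Finally, \eqref{eq:limiting-active-upper-tail} follows by subtracting $v^\xi$ and using $\widehat\sigma\ge\sigma^\xi$, exactly as at the end of the proof of Lemma~\ref{lem:atomic-barrier-identities}.
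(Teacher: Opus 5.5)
Your proposal follows the paper's proof essentially step for step. You extract $b_m\xrightarrow{\Gamma^+}b$ by Hausdorff compactness of hypographs and get $\sigma_m\to\widehat\sigma$ from the appendix stability result. You then identify the law from $g_m\to g^\beta$; your right-continuity argument is equivalent to the paper's uniqueness of weak limits on $[0,\infty]$. Finally you use bounded convergence with Lemmas~\ref{lem:atomic-barrier-identities} and~\ref{lem:atomic-value-stability}, followed by the disjoint-union subtraction.

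One caveat concerns the only step you try to justify yourself, namely regular entry. The law of the iterated logarithm at $\widehat\sigma$ does not give it for a general upper semicontinuous $b$, for two reasons:
\begin{itemize}
\item it cannot rule out a boundary that falls faster than $\sqrt{h\log\log(1/h)}$ immediately after the contact point;
\item it cannot handle a downward right-jump of $b$ exactly at $\sigma^\xi$, which your phrase ``the same holds after $\sigma^\xi$'' passes over.
\end{itemize}
The paper obtains regular entry from three ingredients:
\begin{itemize}
\item the Cameron--Martin comparison in Lemma~\ref{lem:weak-strict-lower-hit}, which gives $\tau_b=\tau_b^<$ almost surely;
\item the $\delta$-margin times $\tau_b^{-\delta}\downarrow\tau_b^<$;
\item Lemma~\ref{lem:right-discontinuity-countable} together with the atomless law of $\sigma^\xi$.
\end{itemize}
These are packaged in Theorem~\ref{thm:delayed-lower-hit-stability}, which needs no hypothesis beyond $\Gamma^+$-convergence. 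There is therefore nothing extra to verify, and citing that theorem directly makes your argument complete.
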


\begin{proof}
Proposition~\ref{prop:hypograph-Hausdorff-compactness} gives a subsequence of $(b_m)$ and an upper semicontinuous function $b$ with $b_{m} \xrightarrow{\Gamma^+} b$. Fix this subsequence.
By Theorem~\ref{thm:delayed-lower-hit-stability}, we have $\sigma_{m}\longrightarrow\widehat\sigma$ almost surely in $[0,\infty]$. The upper semicontinuity of $b$ makes $\widetilde\Rc$ a closed lower barrier.
By construction, \(\widehat\sigma\ge\sigma^\xi\).

View the laws of $\sigma_m$ and $\widehat\sigma$ as probability measures on $[0,\infty]$. The almost sure convergence $\sigma_m\to\widehat\sigma$ implies
\(\mathcal L(\sigma_m)\Rightarrow\mathcal L(\widehat\sigma)\).
Lemma~\ref{lem:atomic-barrier-identities} gives
\(\mathcal L(\sigma_m)=\beta^m\), while Lemma~\ref{lem:canonical-upper-approximation} gives
\(\beta^m\Rightarrow\beta\). Uniqueness of the weak limit therefore yields
    $\widehat\sigma\sim\beta$.
In particular, \(\widehat\sigma<\infty\) almost surely.

Now let us pass to the limit. Fix \(t\ge0\) and
\(x\in\mathbb R\). Since \(\widehat\sigma\sim\beta\) and \(\beta\) is
atomless,
\(\P(\widehat\sigma=t)=0\). Hence
\[
    \mathbf 1_{\{
        \sigma_{m}>t,\,X_t>x
    \}}
    \longrightarrow
    \mathbf 1_{\{
        \widehat\sigma>t,\,X_t>x
    \}}
    \qquad\text{a.s.}.
\]
The bounded convergence theorem, together with Lemmas~\ref{lem:atomic-barrier-identities}
and~\ref{lem:atomic-value-stability}, applied on any
\([0,T]\) with \(T\ge t\), gives
\[
\begin{aligned}
    u^\beta(t,x)
    =
    \lim_{m\to\infty}u_{m}(t,x)
    =
    \P(\widehat\sigma>t,X_t>x).
\end{aligned}
\]
Since
\(\widehat\sigma\ge\sigma^\xi\), we have the disjoint union
    $\{\widehat\sigma>t\}
    =
    \{\sigma^\xi>t\}
    \mathbin{\cup}
    \{\sigma^\xi\le t<\widehat\sigma\}$.
Together with \eqref{eq:limiting-law-value} and $v^\xi(t,x) = \P(\sigma^\xi > t, X_t > x)$, we obtain \eqref{eq:limiting-active-upper-tail}.
\end{proof}

\begin{corollary}
\label{cor:general-contact-barrier}
The map
\[
    (t,x)
    \longmapsto
    u^\beta(t,x)-v^\xi(t,x)
\]
is jointly continuous. Consequently, \(\Rc^\beta\) is a
closed lower barrier.
\end{corollary}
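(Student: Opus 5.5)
Joint continuity of $D:=u^\beta-v^\xi$ comes from the representation \eqref{eq:limiting-active-upper-tail} in Proposition~\ref{prop:general-limiting-barrier}:
\[
D(t,x)=\P\left(\sigma^\xi\le t<\widehat\sigma,\ X_t>x\right).
\]
Fix $(t,x)$ and take $(t_k,x_k)\to(t,x)$. I will show $\mathbf 1_{\{\sigma^\xi\le t_k<\widehat\sigma,\,X_{t_k}>x_k\}}\to\mathbf 1_{\{\sigma^\xi\le t<\widehat\sigma,\,X_t>x\}}$ almost surely and then apply bounded convergence. Continuity of Brownian paths gives $X_{t_k}\to X_t$. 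The indicator can fail to converge only on three events: $\{\sigma^\xi=t\}$, $\{\widehat\sigma=t\}$ and $\{X_t=x\}$. The first two are null because $\xi$ and $\beta$ are atomless and $\widehat\sigma\sim\beta$. The third is null because $X_t$ has a Gaussian law when $t>0$. At $t=0$ we have $\sigma^\xi>0$ a.s., so the event $\{\sigma^\xi\le t_k\}$ has probability at most $\xi([0,t_k])\to 0$. Thus both sides tend to $0=D(0,x)$ and the degenerate state $X_0=0$ causes no problem.

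For the barrier, Lemma~\ref{lem::uvw} gives $0\le D\le w^\beta$, and by definition $\Rc^\beta=\{(t,x):D(t,x)=w^\beta(t)\}$. The representation shows $x\mapsto D(t,x)$ is non-increasing, so each section $\Rc^\beta_t$ is a lower set; it is also closed in $x$ by continuity. For closedness in $(t,x)$, note that $w^\beta=g^\beta-g^\xi$ is continuous, since both survival functions are continuous when the measures are atomless. Hence $\Rc^\beta=\{D-w^\beta=0\}$ is the zero set of a jointly continuous function, and is therefore closed. Monotonicity in $x$ makes it a lower barrier $\{x\le b(t)\}$, with $b(t):=\sup\Rc^\beta_t\in[-\infty,\infty]$ upper semicontinuous by closedness.

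The main obstacle is the null-set bookkeeping at $t=0$ together with the possibility that $\widehat\sigma$ has an atom. Both are settled by $\widehat\sigma\sim\beta$ from Proposition~\ref{prop:general-limiting-barrier} and the standing atomless assumptions, so I expect no further difficulty.
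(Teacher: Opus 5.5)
Your proof is correct and follows essentially the same route as the paper: bounded convergence applied to the representation \eqref{eq:limiting-active-upper-tail} for $t>0$, using atomlessness of $\xi$ and $\beta$ and the Gaussian law of $X_t$, and then monotonicity in $x$ together with $D\le w^\beta$ and continuity of $w^\beta$ for the lower-barrier and closedness claims. The only cosmetic difference is at $t=0$. There the paper uses the uniform bound $0\le D(t,x)\le w^\beta(t)\to0$ from Lemma~\ref{lem::uvw}, whereas you use $D(t_k,x_k)\le\xi([0,t_k])$; the two amount to the same thing.
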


\begin{proof}
Let \((t_n,x_n)\to(t,x)\) with \(t>0\). The continuity of $X$ and the atomlessness of the laws of $\sigma^\xi$, $\widehat\sigma$, and $X_t$ give
\[
    \mathbf 1_{\{
        \sigma^\xi\le t_n<\widehat\sigma,\,
        X_{t_n}>x_n
    \}}
    \longrightarrow
    \mathbf 1_{\{
        \sigma^\xi\le t<\widehat\sigma,\,
        X_t>x
    \}}
    \qquad\text{a.s.}
\]
Thus \eqref{eq:limiting-active-upper-tail} and the bounded convergence theorem give continuity at every $(t,x)$ with $t > 0$. Moreover, Lemma~\ref{lem::uvw} gives
\[
    0
    \le
    u^\beta(t,x)-v^\xi(t,x)
    \le
    w^\beta(t)
    \longrightarrow
    0
\]
when \(t\downarrow0\), uniformly in \(x\), while $u^\beta(0,x)-v^\xi(0,x)=0$. This proves joint continuity on \(\mathbb R_+\times\mathbb R\).

For each \(t \geq 0\), \eqref{eq:limiting-active-upper-tail} shows that
\(x\mapsto u^\beta(t,x)-v^\xi(t,x)\) is non-increasing, and
Lemma~\ref{lem::uvw} implies that
$
    u^\beta(t,x)-v^\xi(t,x)
    \le
    w^\beta(t)$.
If $(t,x) \in \Rc^\beta$ and \(y\le x\), then
\[
    w^\beta(t)
    \ge
    u^\beta(t,y)-v^\xi(t,y)
    \ge
    u^\beta(t,x)-v^\xi(t,x)
    =
    w^\beta(t).
\]
Thus $(t,y) \in \Rc^\beta$, so $\Rc^\beta$ is a lower barrier. Finally, $\Rc^\beta = \{(t,x):u^\beta(t,x)-v^\xi(t,x)=w^\beta(t)\}$ is closed because both sides of the equality are continuous.
\end{proof}

\begin{remark}
In \cite[Theorem~5.1]{EJ}, the authors deal with similar approximation arguments in the one-marginal context. In particular, they apply a one-sided version of
Anulova's argument in \cite{Anulova1980}. Anulova first ignores the barrier before a fixed time \(s>0\) and then
moves its boundary by \(\varepsilon\). The associated hitting times
converge to the original hitting time as \(\varepsilon\downarrow0\).
Together with the subsequent limit \(s\downarrow0\), this yields a
limiting barrier whose hitting time has the target survival function.

Here the next hit may occur at the random entrance time
\(\sigma^\xi\), so we cannot ignore any fixed interval after the
entrance. We instead replace \(X_t\) by \(X_t-\varepsilon t\).
Lemma~\ref{lem:shifted-iterated-gate} shows that the entrance time
generated by the modified path does not exceed \(\sigma^\xi\) almost
surely. Lemma~\ref{lem:weak-strict-lower-hit} shows that the weak and
strict lower-barrier hitting times coincide almost surely.
If the hit occurs at \(\sigma^\xi\), the approximating times must satisfy
\[
    r_k>\sigma^\xi,\qquad
    r_k\downarrow\sigma^\xi,\qquad
    X_{r_k}<b(r_k).
\]
The only possible obstruction is an immediate strict drop of \(b\)
after the entrance. Lemma~\ref{lem:right-discontinuity-countable}
shows that such drops can occur only at countably many deterministic
times. Since \(\sigma^\xi\sim\xi\) and \(\xi\) is atomless, the
entrance avoids these times almost surely. Thus the compactness
argument extends to a random entrance time.
\end{remark}

\subsubsection{Identification of the hitting time}

In this section, we prove that
\[
    \widehat\sigma
    =
    \inf\{t\ge\sigma^\xi:(t,X_t)\in\Rc^\beta\}
    =
    \sigma^{\beta}
    \qquad\text{a.s.}.
\]
We first identify the sections of \(\Rc^\beta\) with those determined by a function
\(b\) when \(w^\beta>0\), and then show that filling the sections
with \(w^\beta=0\) does not change the hitting time.

Let
\[
    \mathcal O^\beta:=\{t\ge0:w^\beta(t)>0\},
    \qquad
    \mathcal Z^\beta:=\{t\ge0:w^\beta(t)=0\}.
\]
These sets are respectively open and closed because \(w^\beta\) is
continuous.

\begin{proposition}
\label{prop:limiting-active-support}
For every \(t\in\mathcal O^\beta\) and \(x\in\mathbb R\),
\[
    u^\beta(t,x)-v^\xi(t,x)=w^\beta(t)
    \quad\Longleftrightarrow\quad
    x\le b(t).
\]
\end{proposition}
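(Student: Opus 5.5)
We use the representation \eqref{eq:limiting-active-upper-tail} from Proposition~\ref{prop:general-limiting-barrier}, namely
\[
    D(t,x):=u^\beta(t,x)-v^\xi(t,x)=\P\left(\sigma^\xi\le t<\widehat\sigma,\ X_t>x\right),
\]
together with the identity $\P(\sigma^\xi\le t<\widehat\sigma)=g^\beta(t)-g^\xi(t)=w^\beta(t)$. The latter holds because $\widehat\sigma\sim\beta$, $\sigma^\xi\sim\xi$ and $\widehat\sigma\ge\sigma^\xi$. Subtracting gives
\[
    w^\beta(t)-D(t,x)=\P\left(\sigma^\xi\le t<\widehat\sigma,\ X_t\le x\right).
\]
The claim is therefore equivalent to the following statement for $t\in\mathcal O^\beta$:
\[
    \P\left(\sigma^\xi\le t<\widehat\sigma,\ X_t\le x\right)=0
    \quad\Longleftrightarrow\quad
    x\le b(t).
\]
We prove the two implications separately.

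\textbf{($\Leftarrow$).} Suppose $x\le b(t)$. On the event $\{\sigma^\xi\le t<\widehat\sigma\}$ we have $t\ge\sigma^\xi$ and $t<\widehat\sigma$. By the definition of $\widehat\sigma$, this forces $(t,X_t)\notin\widetilde\Rc$, that is, $X_t>b(t)\ge x$. Hence the event $\{\sigma^\xi\le t<\widehat\sigma,\ X_t\le x\}$ is empty, and the probability vanishes.

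\textbf{($\Rightarrow$).} Suppose $x>b(t)$, with $b(t)<\infty$. We must show $\P(\sigma^\xi\le t<\widehat\sigma,\ X_t\le x)>0$. Since $w^\beta(t)>0$, the event $A:=\{\sigma^\xi\le t<\widehat\sigma\}$ has positive probability. The problem is that on $A$ we only know $X_t>b(t)$, whereas we need to reach the interval $(b(t),x]$ with positive probability.

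We use upper semicontinuity of $b$ at $t$: there are $\delta>0$ and $c<x$ with $b(s)<c$ for $s\in[t-\delta,t+\delta]$. Set $A':=\{\sigma^\xi\le t-\delta'<\widehat\sigma\}$ for small $\delta'\in(0,\delta)$. Since $\mathcal O^\beta$ is open and $w^\beta$ is continuous, $w^\beta(t-\delta')>0$ for small $\delta'$, and $\P(A')\to\P(A)>0$ as $\delta'\downarrow0$ by atomlessness of $\xi$. Thus $\P(A')>0$.

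Next apply the Markov property at time $t-\delta'$, conditional on $\mathcal F^X_{t-\delta'}$. With positive conditional probability, a Brownian path started from $X_{t-\delta'}$ stays strictly above $c$ on $[t-\delta',t]$ and ends at $X_t\in(c,x]$. Such a path never enters $\widetilde\Rc$ during $[t-\delta',t]$, so $\widehat\sigma>t$ on this event. The event has positive probability, because the set of continuous paths staying in a tube above $c$ and ending in $(c,x]$ has positive Wiener measure from any starting point; the starting point is $X_{t-\delta'}>b(t-\delta')$, and if it lies below $c$ we first let the path rise above $c$ quickly. A cleaner way to arrange this is to choose $c$ and the tube so that $\max_{[t-\delta',t]}b<c$, and to let the path go above $c$ immediately, using the fact that $X_{t-\delta'}>b(t-\delta')$ strictly.

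The main obstacle is precisely this positivity step: entry into $\widetilde\Rc$ must be handled carefully near the start time, where $X_{t-\delta'}$ may be arbitrarily close to the boundary. We would handle it using a support-theorem argument on an interval where $b$ is bounded above by a constant strictly below $x$. Once positivity is established, $w^\beta(t)-D(t,x)>0$ for every $x>b(t)$, and combined with ($\Leftarrow$) this proves the stated equivalence.
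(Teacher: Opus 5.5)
Your argument is correct in substance and follows the paper's overall strategy. You rewrite the claim as positivity of $\P(\sigma^\xi\le t<\widehat\sigma,\ X_t\le x)$ and get ($\Leftarrow$) directly from the definition of $\widehat\sigma$. For ($\Rightarrow$) you restart at an earlier time where survival has positive probability and use the support of Brownian motion to steer a surviving path into $(b(t),x]$.

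The execution of that last step differs from the paper. You use upper semicontinuity at $t$ to get a short window $[t-\delta',t]$ on which $b<c<x$. This is simpler than the paper's route, which takes an arbitrary $[s,t]\subset\mathcal O^\beta$, deduces $b<\infty$ there from $w^\beta(r)>0$, and bends the affine path by $C(r-s)(t-r)$. The price is that you condition on $\mathcal F^X_{t-\delta'}$ and need positivity from every random start $y_0=X_{t-\delta'}>b(t-\delta')$. The paper instead fixes a deterministic $y>b(s)$ in the support of $A\mapsto\P(\sigma^\xi\le s<\widehat\sigma,\ X_s\in A)$, and then needs only one deterministic tube and independence of increments.

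The step you leave as a sketch is exactly where your route needs care. The strict inequality $y_0>b(t-\delta')$ at a single time does not by itself let the path ``go above $c$ immediately''. You need upper semicontinuity of $b$ at the point $t-\delta'$ to find $\epsilon\in(0,\delta')$ and $\eta>0$, both depending on $y_0$, with $b<y_0-\eta$ on $[t-\delta',t-\delta'+\epsilon]$. Consider the path that moves linearly from $y_0$ to $(c+x)/2$ on this window and is constant afterwards, and a tube around it of width less than $\min\{\eta,(x-c)/2\}$. This tube stays strictly above $b$ on $[t-\delta',t]$, using $b<y_0-\eta$ on the window and $b<c$ elsewhere. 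It also ends below $x$, and it has positive Wiener measure. This construction works whether $y_0\le c$ or $y_0>c$.

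Let $B$ be the event that $X_r>b(r)$ for all $r\in[t-\delta',t]$ and $X_t\le x$. Let $p(y_0)$ be its probability given $X_{t-\delta'}=y_0$. This is Borel in $y_0$, because paths staying strictly above an upper semicontinuous function on a compact interval form an open set. The Markov property then gives $\P(A'\cap B)=\E[\mathbf 1_{A'}\,p(X_{t-\delta'})]>0$, since $A'\in\mathcal F^X_{t-\delta'}$ and $X_{t-\delta'}>b(t-\delta')$ on $A'$. On $A'\cap B$ you have $\widehat\sigma>t$ and $X_t\le x$. With this filled in, the proof is complete.

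A minor point: $\P(A')=w^\beta(t-\delta')$ exactly. So your limit argument $\P(A')\to\P(A)$ is unnecessary, and as stated it would need atomlessness of $\beta$ as well as of $\xi$.
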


\begin{proof}
Using the laws of \(\widehat\sigma\) and \(\sigma^\xi\) and the fact that
\(\widehat\sigma\ge\sigma^\xi\), we have
\[
    \P(\sigma^\xi\le t<\widehat\sigma)
    =
    g^\beta(t)-g^\xi(t)
    =
    w^\beta(t).
\]
We first suppose that \(x\le b(t)\). On the event
\(\{\sigma^\xi\le t<\widehat\sigma\}\), the definition of
\(\widehat\sigma\) implies that \(X_t>b(t)\). Hence
\[
    u^\beta(t,x)-v^\xi(t,x)
    =
    \P(\sigma^\xi\le t<\widehat\sigma,\ X_t>x)
    =
    w^\beta(t),
\]
where the first equality is by
\eqref{eq:limiting-active-upper-tail}.

Conversely, let \(x>b(t)\), and choose
\(z\in(b(t),x)\). Since $t \in \mathcal{O}^\beta$ and \(w^\beta(0)=0\), one has \(t>0\). The
openness of \(\mathcal O^\beta\) allows us to choose \(0<s<t\) such that
    $[s,t]\subset\mathcal O^\beta$.
For every \(r\in[s,t]\),
\[
    \P(\sigma^\xi\le r<\widehat\sigma)
    =
    w^\beta(r)>0.
\]
It follows that \(b(r)<+\infty\) on \([s,t]\); otherwise the event on
the left would be empty. Since \(b\) is upper semicontinuous, it is
therefore bounded from above on \([s,t]\). The subprobability measure $A \mapsto \P(\sigma^\xi\le s<\widehat\sigma, X_s\in A)$ is nonzero and concentrated on $(b(s),\infty)$. Its support therefore contains some \(y>b(s)\).

We claim that there is a continuous function
\(q:[s,t]\to\mathbb R\) satisfying
\[
    q(s)=y,
    \qquad
    q(t)=z,
    \qquad
    q(r)>b(r)
    \quad\text{for all }r\in[s,t].
\]
Indeed, if \(h\) is the affine function joining \(y\) to \(z\), then $q(r) := h(r) + C(r-s)(t-r)$ has the required properties for all sufficiently large \(C\).
Upper semicontinuity implies that \(h>b\) near the endpoints, while $b$ is bounded above on the remaining compact subinterval. Since \(q-b\) is lower semicontinuous and strictly positive on
\([s,t]\), choose \(a>0\) such that
\[
    q(r)-b(r)>4a
    \quad\text{for all }r\in[s,t],
    \qquad
    z+2a<x.
\]
By the choice of \(y\), the event
    $E
    :=
    \{\sigma^\xi\le s<\widehat\sigma,\ |X_s-y|<a\}$
has positive probability. The full-support property of Brownian increments gives
\[
    \P\left(
        \sup_{0\le u\le t-s}
        \left|
            (X_{s+u}-X_s)-(q(s+u)-y)
        \right|
        <a
    \right)>0.
\]
This event is
independent of \(\Fc_s^X\), so its intersection with \(E\) has positive probability.
On that intersection,
$|X_r-q(r)|<2a$
  for $s\le r\le t$.
Consequently, \(X_r>b(r)\) on \([s,t]\), while
    $X_t<q(t)+2a=z+2a<x$.
Hence
$
    \P(\sigma^\xi\le t<\widehat\sigma,\ X_t\le x)>0$,
and \eqref{eq:limiting-active-upper-tail} yields
\[
    u^\beta(t,x)-v^\xi(t,x)
    =
    w^\beta(t)
    -
    \P(\sigma^\xi\le t<\widehat\sigma,\ X_t\le x)
    <
    w^\beta(t).
\]
\end{proof}

For \(t\in\mathcal Z^\beta\), Lemma~\ref{lem::uvw} gives
\[
    0
    \le
    u^\beta(t,x)-v^\xi(t,x)
    \le
    w^\beta(t)
    =
    0.
\]
Thus \(\Rc_t^\beta=\mathbb R\).
It remains to show that, almost surely, no such time lies between
\(\sigma^\xi\) and \(\widehat\sigma\).

\begin{proposition}
\label{prop:zero-gap-avoidance}
We have
\[\P\left(
        \mathcal Z^\beta\cap
        [\sigma^\xi,\widehat\sigma)
        \ne\varnothing
    \right)
    =
    0.
\]
\end{proposition}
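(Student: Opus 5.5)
Since $\widehat\sigma\ge\sigma^\xi$ and $\P(\sigma^\xi\le t<\widehat\sigma)=w^\beta(t)$ for every $t\ge0$ (from $\widehat\sigma\sim\beta$ and $\sigma^\xi\sim\xi$), we have $\P(\sigma^\xi\le t<\widehat\sigma)=0$ for each fixed $t\in\mathcal Z^\beta$. The task is to pass from ``each fixed $t$'' to ``some random $t$''. I would first reduce to a countable set. Fix a countable dense subset $D\subset\mathcal Z^\beta$; more precisely, take $D$ to contain a countable dense subset of $\mathcal Z^\beta$ together with all left endpoints and right endpoints of the (countably many) connected components of the open set $\mathcal O^\beta$ that lie in $\mathcal Z^\beta$. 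By countable additivity, almost surely $\sigma^\xi\le t<\widehat\sigma$ fails for every $t\in D$. It then suffices to show that, on the event that $[\sigma^\xi,\widehat\sigma)$ meets $\mathcal Z^\beta$, it also meets $D$, up to a null set.

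Suppose $t^*\in\mathcal Z^\beta\cap[\sigma^\xi,\widehat\sigma)$. I would split into cases according to where $t^*$ sits in the closed set $\mathcal Z^\beta$.

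\textbf{Case (a): $t^*$ is a two-sided or right accumulation point of $\mathcal Z^\beta$ in the sense that points of $\mathcal Z^\beta$ accumulate from the right.} Since $\widehat\sigma>t^*$, every point of $\mathcal Z^\beta$ in $(t^*,\widehat\sigma)$ lies in $[\sigma^\xi,\widehat\sigma)$. Points of $D$ approximate these points, so one of them lies in $(t^*,\widehat\sigma)$, giving a hit in $D$.

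\textbf{Case (b): $t^*$ is the left endpoint of a component $(t^*,c)$ of $\mathcal O^\beta$.} Then $t^*\in D$ by construction, so this case is already covered.

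The delicate situation is therefore $t^*=\sigma^\xi$ with $t^*$ not in $D$ and no zeros of $w^\beta$ to its right inside $[\sigma^\xi,\widehat\sigma)$. By Case (b), however, such a $t^*$ that is isolated from the right must be a left endpoint and hence already lies in $D$. Otherwise $t^*$ is an accumulation point from the right, which is Case (a). So in fact every $t^*$ reduces to $D$, provided the interval $[t^*,\widehat\sigma)$ has positive length, which it does since $t^*<\widehat\sigma$.

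There is one subtlety in Case (a). The approximating points of $\mathcal Z^\beta$ to the right of $t^*$ are taken in $\mathcal Z^\beta\cap(t^*,\widehat\sigma)$, but $D$ was only chosen dense in $\mathcal Z^\beta$, and a dense subset of a closed set may miss the relevant neighbourhood. This is fine: density in $\mathcal Z^\beta$ means every nonempty relatively open subset, here $\mathcal Z^\beta\cap(t^*,\widehat\sigma)$, meets $D$.

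I expect the main obstacle to be exactly this topological bookkeeping: making sure that the only zeros not reachable from a dense countable set are left endpoints of components of $\mathcal O^\beta$, and that these are countable. They are countable because the components are disjoint open intervals. I would write the argument as the inclusion
\[
\{\mathcal Z^\beta\cap[\sigma^\xi,\widehat\sigma)\neq\varnothing\}\subseteq\bigcup_{t\in D}\{\sigma^\xi\le t<\widehat\sigma\},
\]
and each event on the right has probability $w^\beta(t)=0$. An alternative, avoiding topology, would use right-continuity of $t\mapsto\mathbf 1_{\{\sigma^\xi\le t<\widehat\sigma\}}$ on $[\sigma^\xi,\widehat\sigma)$ together with the decomposition above. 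The countable-union inclusion is the route I would commit to.
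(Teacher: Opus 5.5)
Your proof is correct and follows essentially the paper's route. In both, a countable dense subset of $\mathcal Z^\beta$ handles every zero of $w^\beta$ that is approached from the right by zeros inside $[\sigma^\xi,\widehat\sigma)$, and the only zeros left over are left endpoints of the countably many components of $\mathcal O^\beta$. The one small difference is how those endpoints are disposed of. The paper splits on whether $(\sigma^\xi,\widehat\sigma)$ meets $\mathcal Z^\beta$; in the residual case $\sigma^\xi$ must be such an endpoint, which has probability zero because $\xi$ is atomless. You instead put the endpoints into $D$ and reuse $\P(\sigma^\xi\le d<\widehat\sigma)=w^\beta(d)=0$. This is slightly more self-contained, since this step then needs no atomlessness of $\xi$; the right endpoints you add to $D$ are unnecessary but harmless.
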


\begin{proof}
Choose a countable set
\(\mathcal D\subset\mathcal Z^\beta\) that is dense in
\(\mathcal Z^\beta\) with respect to the relative topology. For every
\(d\in\mathcal D\), by \(\sigma^\xi\le\widehat\sigma\) and the laws of the stopping times, we have
\[
    \P(\sigma^\xi\le d<\widehat\sigma)
    =
    \P(\widehat\sigma>d)-\P(\sigma^\xi>d)
    =
    g^\beta(d)-g^\xi(d)
    =
    w^\beta(d)
    =
    0.
\]
If
    $\mathcal Z^\beta\cap(\sigma^\xi,\widehat\sigma)
    \ne\varnothing$,
then this intersection is a nonempty relatively open subset of
\(\mathcal Z^\beta\), and hence contains some \(d\in\mathcal D\).
Therefore
\[
    \left\{
        \mathcal Z^\beta\cap(\sigma^\xi,\widehat\sigma)
        \ne\varnothing
    \right\}
    \subseteq
    \bigcup_{d\in\mathcal D}
    \{\sigma^\xi\le d<\widehat\sigma\},
\]
and the event on the left has probability zero.

It remains to consider the possibility that
\(
    \sigma^\xi\in\mathcal Z^\beta,
\) and
\(
    \sigma^\xi<\widehat\sigma.
\)
Outside the null event considered above, we then have
\[
    (\sigma^\xi,\widehat\sigma)
    \subset\mathcal O^\beta.
\]
Thus \(\sigma^\xi\) is the left endpoint of a connected component of
the open set \(\mathcal O^\beta\). There are at most countably many
such endpoints, whereas $\sigma^\xi$ has the atomless law $\xi$.
This endpoint event therefore has probability zero. Combining the two cases finishes the proof.
\end{proof}

We can now prove Theorem~\ref{thm::main}.

\begin{proof}[Proof of Theorem~\ref{thm::main}]
Proposition~\ref{prop:general-limiting-barrier} and Corollary~\ref{cor:general-contact-barrier} show that
\[
    \widehat\sigma\sim\beta,
    \qquad
    u^\beta(t,x)
    =
    \P(\widehat\sigma>t,\ X_t>x),
\]
and \(\Rc^\beta\) is a closed lower barrier.

From Proposition~\ref{prop:limiting-active-support}, we get
\[
    \Rc_t^\beta
    =
    (-\infty,b(t)],
    \qquad
    t\in\mathcal O^\beta,
\]
whereas \(\Rc_t^\beta=\mathbb R\) for
\(t\in\mathcal Z^\beta\). Hence
\(\widetilde\Rc\subseteq\Rc^\beta\), and therefore $\sigma^{\beta}\le\widehat\sigma$ a.s.

On the event
\(\{\sigma^{\beta}<\widehat\sigma\}\), one has
\(\sigma^{\beta}<\widehat\sigma<\infty\). The closedness of
\(\Rc^\beta\) and the continuity of \(X\) imply that
    $(\sigma^{\beta},X_{\sigma^{\beta}})
    \in
    \Rc^\beta$.
In addition, \(\sigma^{\beta} \notin \mathcal O^\beta\), as the two barriers have the same section
and \(\widehat\sigma\) is the first hit of
\(\widetilde\Rc\). Therefore
\[
    \{\sigma^{\beta}<\widehat\sigma\}
    \subseteq
    \left\{
        \mathcal Z^\beta
        \cap
        [\sigma^\xi,\widehat\sigma)
        \ne\varnothing
    \right\}.
\]
By Proposition~\ref{prop:zero-gap-avoidance}, the event on the
right-hand side has probability $0$. Consequently,
\[
    \sigma^{\beta}=\widehat\sigma
    \qquad\text{a.s.}
\]
The asserted law and value identity now follow from
Proposition~\ref{prop:general-limiting-barrier}.
\end{proof}

\begin{remark}
\label{rem:relative-uniqueness}
We conclude that for fixed $\xi$ and $\sigma^\xi$, the target measure \(\beta\) determines the boundary on \(\mathcal O^\beta\). Fixing its value to \(+\infty\) on
\(\mathcal Z^\beta\) then determines it on all of \(\mathbb R_+\).

Let \(c:\mathbb R_+\to[-\infty,\infty]\) be upper semicontinuous, and set
\[
\Rc(c):=\{(t,x):x\le c(t)\},\qquad
\tau_c:=\inf\{t\ge\sigma^\xi:(t,X_t)\in\Rc(c)\}.
\]
Suppose that \(\tau_c\sim\beta\), and write
\(v^c(t,x):=\P(\tau_c>t,X_t>x)\). Since
\(\tau_c\ge\sigma^\xi\), \(\tau_c\sim\beta\), and
\(\sigma^\xi\sim\xi\), then for every $t \geq 0$ and $x \in \R$,
\[
v^c(t,x)-v^\xi(t,x)
=\P(\sigma^\xi\le t<\tau_c,X_t>x)\le w^\beta(t).
\]
Applying Lemma~\ref{lem::V-submartingale} to \(\tau_c\) and using this
estimate in the optional-sampling argument gives \(v^c\le u^\beta\). Moreover, for
\(x\le c(t)\),
\[
v^c(t,x)-v^\xi(t,x)
=\P(\sigma^\xi\le t<\tau_c)=w^\beta(t).
\]
Hence Lemma~\ref{lem::uvw} gives
\(\Rc(c)\subseteq\Rc^\beta\) and \(\sigma^\beta\le\tau_c\).
Since \(\sigma^\beta\) and \(\tau_c\) both have law \(\beta\),
\(\tau_c=\sigma^\beta=\widehat\sigma\) a.s.; hence \(v^c=u^\beta\).
The converse argument in the proof of
Proposition~\ref{prop:limiting-active-support}, applied to
\((c,\tau_c)\), then yields \(c=b\) on \(\mathcal O^\beta\).

Moreover, if \(c=b\) on \(\mathcal O^\beta\) and
\(c\ge b\) on \(\mathcal Z^\beta\), then
\(\Rc(b)\subseteq\Rc(c)\), so \(\tau_c\le\widehat\sigma\).
Continuity of \(X\), closedness of \(\Rc(c)\), and equality on
\(\mathcal O^\beta\) imply
\[
\{\tau_c<\widehat\sigma\}
\subseteq
\{\mathcal Z^\beta\cap[\sigma^\xi,\widehat\sigma)\ne\varnothing\}.
\]
Proposition~\ref{prop:zero-gap-avoidance} therefore gives
\(\tau_c=\widehat\sigma\) a.s. Thus, among upper semicontinuous boundaries
with \(\tau_c\sim\beta\), the convention \(c=+\infty\) on
\(\mathcal Z^\beta\) gives the unique boundary, with
\(\Rc(c)=\Rc^\beta\).
\end{remark}

\section{Proofs of the optimality results}\label{sec:opti}

To prove the optimality results, we first compare the stopping times $(\sigma_1, \ldots, \sigma_n)$ obtained in Theorem~\ref{thm::goal} with an arbitrary $\r = (\r_1, \ldots, \r_n) \in \TT(\boldsymbol{\mu}_n)$. Theorem~\ref{thm:opt-multi} then follows by combining this comparison with It\^o's formula. For any stopping time $\theta$, set
$$v_\theta(t,x) := \P(\theta > t, X_t > x).$$
The following inequality relates $v_\theta(t,x)$ to the term $\E^x[v_\theta(t-\tau, Y_\tau)]$ appearing in the recursive definition of $u^i$.

\begin{lemma}
\label{lem:opt-ineq}
    Let $\theta \in \TT$ be an almost surely finite stopping time. Then for every $t \geq 0$, $x \in \R$, and $\tau \in \TT^t$,
    $$v_\theta(t,x) \leq \E^x\left[v_\theta(t-\tau, Y_\tau)\right].$$
\end{lemma}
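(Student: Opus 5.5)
The plan is to reduce the statement to Lemma~\ref{lem::V-submartingale} together with optional sampling. That lemma was stated for $\sigma^\xi\in\TT$ with law $\xi$, but its proof only used that $\sigma^\xi$ is an $\F^X$-stopping time. In fact it used even less: only that $\{\sigma^\xi>t-s\}$ decreases in the time argument, and that $\{\sigma^\xi>t-s\}\in\Fc^X_{t-s}$, which is needed to apply the Markov property of $X$ at time $t-s$ when switching.

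Accordingly, the first step is to set $V^t_s:=v_\theta(t-s,Y_s)$ for $0\le s\le t$. Repeating the computation in the proof of Lemma~\ref{lem::V-submartingale} verbatim, with $\sigma^\xi$ replaced by $\theta$, shows that $(V^t_s)_{0\le s\le t}$ is a $\P^x$-submartingale with respect to $(\Fc^Y_s)_{0\le s\le t}$.

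I will check the key step of that proof carefully: for $u\le s$,
\[
\E^x\bigl[\mathbf 1_{\{X_{t-s}>\tilde Y_{s-u}\}}\bigr]
\]
given $X_{t-s}$ and $Y_u$ is turned, via Lemma~\ref{lem::BM-switch}, into $\P^{X_{t-s}}(\tilde X_{s-u}>Y_u)$. Multiplying by $\mathbf 1_{\{\theta>t-s\}}$, which is $\Fc^X_{t-s}$-measurable, and using the Markov property of $X$ at the deterministic time $t-s$ then gives
\[
\E\bigl[\mathbf 1_{\{\theta>t-s\}}\mathbf 1_{\{X_{t-u}>Y_u\}}\bigr]\ge v_\theta(t-u,Y_u).
\]
The hypothesis that $\theta$ is almost surely finite plays no essential role here; it is harmless to keep.

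The second step is optional sampling. Since $V^t$ takes values in $[0,1]$ and $\tau\in\TT^t$ is bounded by $t$, Doob's optional sampling theorem for bounded stopping times of a bounded submartingale gives
\[
V^t_0\le \E^x[V^t_\tau],
\]
that is, $v_\theta(t,x)\le \E^x[v_\theta(t-\tau,Y_\tau)]$. The theorem is usually stated for right-continuous submartingales, so some care is needed. If right-continuity of $s\mapsto V^t_s$ is unclear, I would use discrete optional sampling instead: approximate $\tau$ from above by the dyadic stopping times $\tau_k\downarrow\tau$ with values in a finite grid of $[0,t]$, for which the inequality holds by the discrete submartingale property. Then pass to the limit $k\to\infty$ by bounded convergence.

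I expect that passage to the limit to be the main obstacle, since it requires that
\[
v_\theta(t-\tau_k,Y_{\tau_k})\to v_\theta(t-\tau,Y_\tau).
\]
The map $y\mapsto v_\theta(r,y)$ is continuous for $r>0$ because $X_r$ has a density. However, $r\mapsto v_\theta(r,y)$ may jump downward where the law of $\theta$ has atoms, and approaching from $\tau_k\downarrow\tau$ corresponds to $r\uparrow t-\tau$, where $v_\theta$ has left limits $\P(\theta\ge r,X_r>y)\ge v_\theta(r,y)$.

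The fallback is to exploit this one-sided structure. It yields
\[
\liminf_k \E^x[v_\theta(t-\tau_k,Y_{\tau_k})]\ge \E^x[v_\theta(t-\tau,Y_\tau)],
\]
which points the wrong way. So instead I would approximate $\tau$ from below within $\Fc^Y$-predictable-type grids: use $\tau'_k$ taking grid values with $\tau'_k\le\tau$, obtained by conditioning on $\Fc^Y_{\tau'_k}$, and handle the remaining piece directly using the submartingale inequality between consecutive grid points. Equivalently, I would verify right-continuity in probability of $V^t$, which holds here because $r\mapsto \mathbf 1_{\{\theta>r\}}$ is right-continuous. This makes $s\mapsto V^t_s$ left-continuous in calendar time and hence gives the right regularity after a regularisation argument.
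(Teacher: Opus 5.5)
Your two steps are exactly the paper's proof. It applies Lemma~\ref{lem::V-submartingale} with $\theta$ in place of $\sigma^\xi$ and then invokes optional sampling at $\tau\le t$ without further comment. Your check that the submartingale argument only needs $\{\theta>t-s\}\in\Fc^X_{t-s}$ and monotonicity in $s$ is correct. Where the paper uses the lemma, $\theta=\rho_i$ has the atomless law $\mu_i$. Then $(r,y)\mapsto v_\theta(r,y)$ is continuous on $(0,\infty)\times\R$, and $V^t$ has continuous paths on $[0,t]$ (at $s=t$ use $Y_t\neq0$ a.s.). So Doob's theorem applies as it stands.

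Your regularity worry matters only when the law of $\theta$ has atoms, and there your proposed remedies fail.

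\emph{Right-continuity in probability is false.} As you computed, for $s\downarrow s_0<t$ one has $V^t_s\to\P(\theta\ge t-s_0,\,X_{t-s_0}>y)|_{y=Y_{s_0}}$. So $V^t$ is left-continuous in $s$, with upward right jumps $\P(\theta=a,\,X_a>Y_{s_0})$ at each $s_0=t-a$ where $\P(\theta=a)>0$. In general it is \emph{not} right-continuous in probability there. Right-continuity of $r\mapsto\mathbf 1_{\{\theta>r\}}$ gives continuity from the left in $s$, not from the right.

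\emph{The lower approximation is not available.} The roundings $2^{-k}\lfloor 2^k\tau\rfloor$ are not stopping times, so approximating $\tau$ from below in the form you describe cannot be carried out.

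\emph{A short correct fix.} The jump times of $V^t$ are deterministic and countable, so put them into the grid.
\begin{itemize}
\item Enumerate $D:=\{s\in[0,t):\P(\theta=t-s)>0\}=\{d_1,d_2,\dots\}$.
\item Let $G_k:=\{jt2^{-k}:0\le j\le 2^k\}\cup\{d_1,\dots,d_k\}$ and $\tau_k:=\min\{g\in G_k:g\ge\tau\}$.
\item Each $\tau_k$ is a finitely valued $\F^Y$-stopping time, and $\tau_k\downarrow\tau$.
\item Discrete optional sampling gives $v_\theta(t,x)\le\E^x[V^t_{\tau_k}]$.
\end{itemize}
It remains to show $V^t_{\tau_k}\to V^t_\tau$ pathwise.
\begin{itemize}
\item On $\{\tau\in D\}\cup\{\tau=t\}$ this holds because $\tau_k=\tau$ eventually.
\item On $\{\tau<t,\ \tau\notin D\}$ it holds because $v_\theta$ is jointly continuous at $(t-\tau,Y_\tau)$. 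The time $t-\tau$ is not an atom of $\theta$, and $\P(X_r=y)=0$ for $r>0$.
\end{itemize}
Bounded convergence then gives $v_\theta(t,x)\le\E^x[v_\theta(t-\tau,Y_\tau)]$.
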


\begin{proof}
    By Lemma~\ref{lem::V-submartingale}, applied with $\theta$ in place of $\sigma^\xi$, the process $\bigl(v_\theta(t-s,Y_s)\bigr)_{0\le s\le t}$ is a bounded submartingale. Since $\tau\le t$ almost surely, one can conclude
    $$
    v_\theta(t,x)=v_\theta(t,Y_0)\le\E^x\!\left[v_\theta(t-\tau,Y_\tau)\right].
    $$
\end{proof}

\begin{proposition}
    For every $\r \in \TT(\boldsymbol \mu_n)$ and every $(t, x) \in \R_+ \times \R$,
    \begin{equation}
        \label{eq:sur-tail}
        v_{\r_i}(t,x) \leq u^i(t,x) = v_{\sigma_i}(t,x), \quad i=1,\ldots,n.
    \end{equation}
\end{proposition}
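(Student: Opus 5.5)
The equality $u^i=v_{\sigma_i}$ is the second assertion of Theorem~\ref{thm::goal}, so only the inequality $v_{\r_i}\le u^i$ needs proof. I will argue by induction on $i$, at each stage checking that $v_{\r_i}(t,x)$ is dominated by the payoff in the optimal stopping problem defining $u^i$, for every $\tau\in\TT^t$. Taking the infimum over $\tau$ then gives the claim.

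\textbf{Base case $i=1$.} Fix $\tau\in\TT^t$. On $\{\tau=t\}$ the payoff equals $\mathbf 1_{\{Y_t<0\}}$. By Lemma~\ref{lem::BM-switch} with $r=t$ and $a=0$, its expectation matches $\P(X_t>Y_t)$ evaluated at the terminal layer. This equals $v_{\r_1}(0,\cdot)$ up to the null event $\{\r_1=0\}$, since $\mu_1$ is atomless. On $\{\tau<t\}$ the payoff equals $g^1(t-\tau)$, and
\[
v_{\r_1}(t-\tau,y)\le\P(\r_1>t-\tau)=g^1(t-\tau).
\]
Hence the payoff dominates $v_{\r_1}(t-\tau,Y_\tau)$ pointwise, up to the terminal null set. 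Taking expectations and applying Lemma~\ref{lem:opt-ineq} with $\theta=\r_1$ gives
\[
v_{\r_1}(t,x)\le \E^x[v_{\r_1}(t-\tau,Y_\tau)]\le \E^x[\text{payoff}].
\]
Taking the infimum over $\tau$ yields $v_{\r_1}\le u^1$.

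\textbf{Induction step.} Assume $v_{\r_{k-1}}\le u^{k-1}$. Since $\r_{k-1}\le\r_k$, for every $s\ge0$ and $y\in\R$ we have
\[
v_{\r_k}(s,y)=v_{\r_{k-1}}(s,y)+\P(\r_{k-1}\le s<\r_k,\ X_s>y).
\]
The last term is at most
\[
\P(\r_{k-1}\le s<\r_k)=g^k(s)-g^{k-1}(s)=w^k(s),
\]
because the marginal laws are $\mu_{k-1}$ and $\mu_k$. When $s=0$, it is $\P(\r_{k-1}=0)=0$ by atomlessness. Therefore
\[
v_{\r_k}(s,y)\le u^{k-1}(s,y)+w^k(s)\mathbf 1_{\{s>0\}}.
\]
Substituting $s=t-\tau$ and $y=Y_\tau$, the indicator $\mathbf 1_{\{s>0\}}$ becomes $\mathbf 1_{\{\tau<t\}}$. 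Lemma~\ref{lem:opt-ineq} with $\theta=\r_k$ then gives
\[
v_{\r_k}(t,x)\le\E^x\bigl[v_{\r_k}(t-\tau,Y_\tau)\bigr]\le \E^x\bigl[u^{k-1}(t-\tau,Y_\tau)+w^k(t-\tau)\mathbf 1_{\{\tau<t\}}\bigr].
\]
Taking the infimum over $\tau\in\TT^t$ yields $v_{\r_k}\le u^k$.

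\textbf{Main obstacle.} The main delicacy is the base case at $\tau=t$, i.e.\ matching the terminal payoff $\mathbf 1_{\{Y_\tau<0\}}$ with $v_{\r_1}(0,Y_t)=\mathbf 1_{\{Y_t<0\}}\P(\r_1>0)$. With $X_0=0$, the latter equals $\mathbf 1_{\{Y_t<0\}}$ because $\P(\r_1=0)=\mu_1(\{0\})=0$. A similar boundary check, $w^k(0)=0$, was already handled in the induction step. The lemmas also require $\r_k$ to be finite, which holds since $\mu_k$ is a probability measure on $\R_+$.
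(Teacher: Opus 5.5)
Your proof is correct and follows the paper's argument essentially line for line: induction on $i$, a pointwise bound of $v_{\rho_i}(t-\tau,Y_\tau)$ by the payoff defining $u^i$ (using $\P(\rho_{i-1}\le r<\rho_i)=w^i(r)$ and atomlessness at $r=0$), then Lemma~\ref{lem:opt-ineq} and an infimum over $\tau$. The invocation of Lemma~\ref{lem::BM-switch} in your base case is unnecessary. The terminal comparison is immediate from $X_0=0$: $v_{\rho_1}(0,y)=\P(\rho_1>0)\mathbf 1_{\{y<0\}}\le\mathbf 1_{\{y<0\}}$.
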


\begin{proof}
By Theorem~\ref{thm::goal}, $u^i(t,x)=v_{\sigma_i}(t,x)$,
so it remains to prove $v_{\rho_i}\le u^i$. We proceed by induction on $i$.

For $i=1$, since $\rho_1\sim\mu_1$, for every $r>0$ and $y\in \R$,
$
    v_{\rho_1}(r,y)
    \le
    \P(\rho_1>r)
    =
    g^1(r).
$
Since \(\mu_1\) is atomless, \(\P(\rho_1>0)=1\), and hence
\(
    v_{\rho_1}(0,y)
    =
    \mathbf 1_{\{y<0\}}.
\)
It follows that, for every \(\tau\in\TT^t\),
\[
\begin{aligned}
    v_{\rho_1}(t-\tau,Y_\tau)
    \le
    \mathbf 1_{\{Y_\tau<0\}}
    +
    \bigl(
        g^1(t-\tau)-\mathbf 1_{\{Y_\tau<0\}}
    \bigr)\mathbf 1_{\{\tau<t\}}.
\end{aligned}
\]
Combining this inequality with Lemma~\ref{lem:opt-ineq} gives
\[
\begin{aligned}
    v_{\rho_1}(t,x)
    &\le
    \E^x\left[
        v_{\rho_1}(t-\tau,Y_\tau)
    \right]
    \le
    \E^x\left[
        \mathbf 1_{\{Y_\tau<0\}}
        +
        \bigl(
            g^1(t-\tau)-\mathbf 1_{\{Y_\tau<0\}}
        \bigr)\mathbf 1_{\{\tau<t\}}
    \right].
\end{aligned}
\]
Taking the infimum over \(\tau\in\TT^t\) yields
\(
    v_{\rho_1}(t,x)\le u^1(t,x).
\)

Now let \(i\ge2\), and suppose that
\(
    v_{\rho_{i-1}}(r,y)\le u^{i-1}(r,y)
\)
for every \((r,y)\in\R_+\times\R\). Since
\(\rho_{i-1}\le\rho_i\), for \(r>0\),
\[
\begin{aligned}
    v_{\rho_i}(r,y)
    &=
    \P(\rho_i>r,\ X_r>y)\\
    &\le
    v_{\rho_{i-1}}(r,y)
    +
    \P(\rho_{i-1}\le r<\rho_i)\\
    &=
    v_{\rho_{i-1}}(r,y)
    +
    g^i(r)-g^{i-1}(r)\\
    &\le
    u^{i-1}(r,y)+w^i(r).
\end{aligned}
\]
At \(r=0\), atomlessness gives
\(
    v_{\rho_i}(0,y)
    =
    u^{i-1}(0,y)
    =
    \mathbf 1_{\{y<0\}}
\)
and
\(
    w^i(0)=0.
\)
Thus, for every \(r\ge0\) and \(y\in\mathbb R\),
\[
    v_{\rho_i}(r,y)
    \le
    u^{i-1}(r,y)
    +
    w^i(r)\mathbf 1_{\{r>0\}}.
\]
Taking \(r=t-\tau\) and combining this inequality with Lemma~\ref{lem:opt-ineq} again, we obtain
\[
\begin{aligned}
    v_{\rho_i}(t,x)
    &\le
    \E^x\!\left[v_{\rho_i}(t-\tau,Y_\tau)\right]
    \le
    \E^x\!\left[
        u^{i-1}(t-\tau,Y_\tau)
        +
        w^i(t-\tau)\mathbf 1_{\{\tau<t\}}
    \right].
\end{aligned}
\]
Taking the infimum over \(\tau\in\TT^t\) and using the recursive definition of \(u^i\) gives
\[
    v_{\rho_i}(t,x)\le u^i(t,x).
\]
The induction is complete.
\end{proof}

Fix \(\rho\in\TT(\boldsymbol\mu_n)\), \(i\in\{1,\ldots,n\}\), and \(t\ge0\). Since \(\rho_i\) and \(\sigma_i\) have the same law, \(\P(\rho_i>t)=\P(\sigma_i>t)\). Moreover, \eqref{eq:sur-tail} gives
\[
    \P(\rho_i>t,\ X_t>x)
    \le
    \P(\sigma_i>t,\ X_t>x),
    \qquad x\in\mathbb R.
\]
Consequently, for every non-decreasing Borel function \(h\) such that \(h(X_t)\mathbf 1_{\{\rho_i>t\}}\) and \(h(X_t)\mathbf 1_{\{\sigma_i>t\}}\) are integrable,
\begin{equation}\label{eq:surviving-state-test-order}
    \E\left[h(X_t)\mathbf 1_{\{\sigma_i>t\}}\right]
    \ge
    \E\left[h(X_t)\mathbf 1_{\{\rho_i>t\}}\right].
\end{equation}
Indeed, the inequality follows directly for non-decreasing step functions. The bounded Borel case follows by approximation, and the integrable case by two-sided truncation.

\begin{proof}[Proof of Theorem \ref{thm:opt-multi}]
Fix \(\rho\in\TT(\boldsymbol\mu_n)\),
\(i\in\{1,\ldots,n\}\), and
\(\theta\in\{\rho_i,\sigma_i\}\). For \(T<\infty\), let
\[
    \gamma_m:=\inf\{t\ge0:|X_t|\ge m\},
    \qquad
    \theta_{T,m}:=\theta\wedge T\wedge\gamma_m.
\]
By It\^o's formula, we have
\[
    F_i(\theta_{T,m},X_{\theta_{T,m}})
    =
    F_i(0,0)
    +
    \int_0^{\theta_{T,m}}
        \mathcal G F_i(t,X_t)\,\mathrm dt+
    \int_0^{\theta_{T,m}}
        \partial_xF_i(t,X_t)\,\mathrm dX_t.
\]
Since \(\partial_xF_i\) is bounded on
\([0,T]\times[-m,m]\), the last integral is a square-integrable
martingale and therefore has expectation zero.

Let
    $M_T:=\sup_{0\le t\le T}|X_t|$.
The polynomial bounds on each finite time interval imply
\[
    |F_i(\theta_{T,m},X_{\theta_{T,m}})|
    \le
    K_{i,T}(1+M_T^{r_{i,T}}) \quad \text{and} \quad
    \int_0^{\theta_{T,m}}
        |\mathcal G F_i(t,X_t)|\,\mathrm dt
    \le
    TK_{i,T}(1+M_T^{r_{i,T}}).
\]
The right-hand sides of both inequalities are integrable. Letting
\(m\to\infty\), dominated convergence and Fubini's theorem yield
\[
\begin{aligned}
    \E[F_i(T\wedge\theta,X_{T\wedge\theta})]
    &=
    F_i(0,0)
    +
    \int_0^T
    \E\!\left[
        \mathcal G F_i(t,X_t)
        \mathbf 1_{\{\theta>t\}}
    \right]\mathrm dt.
\end{aligned}
\]
Applying this identity first with \(\theta=\sigma_i\) and then with
\(\theta=\rho_i\), and taking the difference, we obtain
\begin{equation}\label{eq:stagewise-finite-horizon-gap}
\begin{aligned}
    &\quad\,\,\E[F_i(T\wedge\sigma_i,X_{T\wedge\sigma_i})]
    -
    \E[F_i(T\wedge\rho_i,X_{T\wedge\rho_i})]\\
    &=
    \int_0^T
    \Bigl(
        \E\!\left[
            \mathcal G F_i(t,X_t)
            \mathbf 1_{\{\sigma_i>t\}}
        \right]
        -
        \E\!\left[
            \mathcal G F_i(t,X_t)
            \mathbf 1_{\{\rho_i>t\}}
        \right]
    \Bigr)\,\mathrm dt
    \ge0.
\end{aligned}
\end{equation}
The finite-horizon polynomial bounds make both integrands above integrable.
For almost every \(t\), the function
\(\mathcal G F_i(t,\cdot)\) is non-decreasing, so the last inequality
follows from \eqref{eq:surviving-state-test-order} with
$h=\mathcal GF_i(t,\cdot)$.

Since \(\rho_i\) and \(\sigma_i\) are almost surely finite, for
\(\theta\in\{\rho_i,\sigma_i\}\),
\[
    F_i(T\wedge\theta,X_{T\wedge\theta})
    \longrightarrow
    F_i(\theta,X_\theta)
    \qquad\text{a.s.}
\]
The assumed uniform integrability therefore gives convergence of the
corresponding expectations. Letting \(T\to\infty\) in
\eqref{eq:stagewise-finite-horizon-gap}, we finally obtain
\[
    \E[F_i(\sigma_i,X_{\sigma_i})]
    \ge
    \E[F_i(\rho_i,X_{\rho_i})].
\]
If the uniform-integrability condition holds for all
\(\rho\in\TT(\boldsymbol\mu_n)\), summing over all coordinatewise
inequalities completes the proof.
\end{proof}

\begin{proof}[Proof of Corollary \ref{opt:case1}]
Regard $F_i$ as independent of the time variable. Then
\[
    \mathcal G F_i(t,x)=\frac12F_i''(x)
\]
is non-decreasing in \(x\), and
Theorem~\ref{thm::polygrowth} provides the required polynomial bounds on each finite time interval. For
\(\theta\in\{\rho_i,\sigma_i\}\), let
\(M_\theta:=\sup_{0\le s\le\theta}|X_s|\). Then
\[
    \sup_{T\ge0}|F_i(X_{T\wedge\theta})|
    \le K_i(1+M_\theta^p).
\]
Since \(\theta\sim\mu_i\le_{\mathrm{st}}\mu_n\),
Lemma~\ref{lem:critical-bdg} and the \(p/2\)-moment assumption imply the
uniform integrability. The result now follows from
Theorem~\ref{thm:opt-multi}.
\end{proof}

\begin{proof}[Proof of Corollary \ref{opt:case2}]
Fix \(\rho\in\TT(\boldsymbol\mu_n)\),
\(i\in\{1,\ldots,n\}\), and
\(\theta\in\{\rho_i,\sigma_i\}\).
Let \(M_\theta:=\sup_{0\le s\le\theta}|X_s|\), and set
    $F_i(t,x):=A(t)x$.
Since \(A'\ge0\),
\[
    \mathcal G F_i(t,x)=A'(t)x
\]
is non-decreasing in \(x\). The polynomial bounds on each finite time interval follow because \(A\) and \(A'\) are bounded on every compact time interval. Moreover,
\[
    \sup_{T\ge0}
    |F_i(T\wedge\theta,X_{T\wedge\theta})|
    \le
    K(1+\theta^q)M_\theta.
\]
The right-hand side is integrable by
Lemma~\ref{lem:critical-bdg} and the
\((q+1/2)\)-moment assumption. Hence the stopped payoff families are
uniformly integrable, and Theorem~\ref{thm:opt-multi} yields the result.
\end{proof}

\begin{proof}[Proof of Corollary~\ref{opt:case3}]
Fix \(\rho\in\TT(\boldsymbol\mu_n)\) and
\(i\in\{1,\ldots,n\}\). For
\(\theta\in\{\rho_i,\sigma_i\}\), define
\[
M_\theta:=\sup_{0\le s\le\theta}|X_s|,
\qquad
F_i(t,x):=A(t)F(x).
\]
Since \(A'\ge0\), we have
\[
    \mathcal G F_i(t,x)
    =
    A'(t)F(x)
    +
    \frac12A(t)F''(x).
\]
Both terms are non-decreasing in \(x\). Since \(A\) and \(A'\) are
bounded on every compact time interval,
Theorem~\ref{thm::polygrowth} provides the required polynomial bounds on each finite time interval.

Now, for some constant \(C\),
\[
    \sup_{T\ge0}
    |F_i(T\wedge\theta,X_{T\wedge\theta})|
    \le
    C(1+\theta^q)(1+M_\theta^p).
\]
Lemma~\ref{lem:critical-bdg} and the
\((q+p/2)\)-moment assumption show that the right-hand side is
integrable. Hence the stopped payoff families are uniformly
integrable, and Theorem~\ref{thm:opt-multi} yields the result.
\end{proof}

\appendix

\section{Compactness of the barrier hitting times}
\label{app:lower-barrier-compactness}

This appendix proves the compactness and stability results used in Section~\ref{sec:general_case}. Throughout this appendix, for some
\(k\ge1\), \(\sigma^\xi=\sigma_k\) denotes the \(k\)-th stopping time generated by the recursion in
Section~\ref{sec:proba}. We assume that \(\sigma^\xi\sim\xi\), where
\(\xi\) is an atomless probability measure on \(\mathbb R_+\).
Consequently, \(\sigma^\xi\) is strictly positive and finite almost
surely, and its law is atomless. In addition, note that a closed lower barrier has the form
\[
    \Rc(b)
    :=
    \{(t,x)\in\mathbb R_+\times\mathbb R:x\le b(t)\},
\]
where
    $b:\mathbb R_+\longrightarrow[-\infty,\infty]$
is upper semicontinuous.

\subsection{\texorpdfstring{\(\Gamma\)}{Gamma}-convergence and right discontinuities}

We use the following hypograph analogue of \(\Gamma\)-convergence.
\begin{definition}
\label{def:upper-Gamma-convergence}
Let \(b_n,b:\mathbb R_+\to[-\infty,\infty]\) be upper
semicontinuous. We say that \(b_n\) upper-\(\Gamma\)-converges to
\(b\), and write
\[
    b_n\xrightarrow{\Gamma^+}b,
\]
if the following conditions hold:
\begin{enumerate}
    \item for every sequence \(t_n\to t\),
    $
        \limsup_{n\to\infty}b_n(t_n)\le b(t)$;
    \item for every \(t\ge0\), there exists \(t_n\to t\) such that
    $
        b_n(t_n)\longrightarrow b(t)$.
\end{enumerate}
\end{definition}

For an upper semicontinuous boundary \(b\), define
\[
    \mathcal J_b^+
    :=
    \left\{
        t\ge0:
        \limsup_{s\downarrow t}b(s)<b(t)
    \right\}.
\]

\begin{lemma}
\label{lem:right-discontinuity-countable}
The set \(\mathcal J_b^+\) is at most countable. Consequently,
\(
    \P(\sigma^\xi\in\mathcal J_b^+)=0.
\)
\end{lemma}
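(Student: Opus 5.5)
The plan is to associate to each $t\in\mathcal J_b^+$ a rational number, injectively, and then use atomlessness for the probability statement.

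\emph{Injection.} Fix $t\in\mathcal J_b^+$. Then $\limsup_{s\downarrow t}b(s)<b(t)$, and in particular $b(t)>-\infty$. Choose a rational $q_t$ with
\[
\limsup_{s\downarrow t}b(s)<q_t<b(t).
\]
By the definition of the right $\limsup$, there is a rational $r_t>t$ such that $b(s)<q_t$ for all $s\in(t,r_t)$. This assigns to $t$ the rational pair $(q_t,r_t)\in\mathbb Q^2$.

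To see that the map $t\mapsto(q_t,r_t)$ is injective, suppose $t<t'$ both lie in $\mathcal J_b^+$ and receive the same pair $(q,r)$. Then $t'\in(t,r)$, so the condition attached to $t$ gives $b(t')<q$. On the other hand, the choice of $q$ for $t'$ gives $q<b(t')$, which is a contradiction. Hence $\mathcal J_b^+$ injects into $\mathbb Q^2$ and is at most countable. Upper semicontinuity of $b$ is not needed for this step; only the strict inequality in the definition of $\mathcal J_b^+$ is used.

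\emph{Probability statement.} The boundary $b$ is deterministic here, being the limit boundary from Proposition~\ref{prop:general-limiting-barrier}. Therefore $\mathcal J_b^+$ is a fixed countable, hence Borel, set. Since $\sigma^\xi\sim\xi$ and $\xi$ is atomless, countable additivity gives
\[
\P(\sigma^\xi\in\mathcal J_b^+)=\sum_{t\in\mathcal J_b^+}\xi(\{t\})=0.
\]

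There is no real obstacle in this argument. The only point to check is that $q_t$ can be chosen when $\limsup_{s\downarrow t}b(s)=-\infty$ or when $b(t)=+\infty$. In either case the open interval $\bigl(\limsup_{s\downarrow t}b(s),\,b(t)\bigr)$ in the extended reals is nonempty, so it contains a real rational, and the argument goes through unchanged.
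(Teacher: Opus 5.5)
Your proof is correct and is essentially the paper's argument. The paper covers $\mathcal J_b^+$ by the sets $D_{p,q,m}=\{t\ge0: b(t)>q,\ \sup_{t<s<t+1/m}b(s)<p\}$, with rationals $p<q$ and $m\ge1$, and shows each such set is $1/m$-separated; you instead use a single rational level and a rational right endpoint to inject $\mathcal J_b^+$ directly into $\mathbb Q^2$, but the same contradiction $b(t')<q<b(t')$ drives both arguments, and the probabilistic consequence follows from the atomlessness of $\xi$ in each case.
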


\begin{proof}
    For $p, q \in \Q$ with $p < q$ and $m \geq 1$, define
    $$D_{p,q,m}:= \left\{t \geq 0: b(t) > q, \sup_{t<s<t + 1/m}b(s) < p\right\}.$$
    If $t, t' \in D_{p,q,m}$ and $t < t'$, then $t' \geq t + 1/m$; otherwise, $b(t')<p<q<b(t')$, a contradiction. Hence, for $j \geq 0$, each interval $[j/m, (j+1)/m)$ contains at most one point of $D_{p,q,m}$. Thus, $D_{p,q,m}$ is at most countable.

    Now let $t \in \Jc_b^+$. Choose $p, q\in \Q$ such that $\limsup_{s \downarrow t}b(s) < p<q<b(t)$. For some $m\geq 1$, $\sup_{t<s<t+1/m} b(s) < p$, and hence $t \in D_{p,q,m}$. Therefore,
    $$\Jc_b^+ \subseteq \bigcup_{\substack{p, q\in\Q \\ p<q}}\bigcup_{m \geq 1}D_{p,q,m},$$
    which is at most countable.
\end{proof}

\subsection{Hitting times of a shifted process}

Recall that the corresponding stopping regions
$\Rc^1,\ldots,\Rc^k$ are closed and satisfy
$$
    (t,x)\in\Rc^j,\quad y\le x
    \quad\Longrightarrow\quad
    (t,y)\in\Rc^j,
    \qquad j=1,\ldots,k.
$$
For $\eps > 0$, set $X_t^\eps:=X_t-\eps t$, and let $\sigma^{\xi,\eps}$ be the $k$-th stopping time obtained from the same recursion with $X^\eps$ in place of $X$, while keeping $\Rc^1,\ldots,\Rc^k$ unchanged.

\begin{lemma}
\label{lem:shifted-iterated-gate}
For every \(\eps>0\),
\(
    \sigma^{\xi,\varepsilon}\le\sigma^\xi
\)
almost surely.
\end{lemma}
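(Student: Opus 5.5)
We argue by induction on the stage $j=1,\ldots,k$, with the pathwise comparison $X^\eps_t\le X_t$ for all $t\ge0$ as the only input. Write $\sigma_j$ for the stopping times built from $X$ and $\sigma_j^\eps$ for those built from $X^\eps$ with the same regions:
\[
\sigma_1^\eps:=\inf\{t>0:(t,X^\eps_t)\in\Rc^1\},\qquad
\sigma_j^\eps:=\inf\{t\ge\sigma_{j-1}^\eps:(t,X^\eps_t)\in\Rc^j\}.
\]
The claim we prove is the stronger statement that $\sigma_j^\eps\le\sigma_j$ for every $j\le k$ and every path, not only almost surely. Taking $j=k$ gives the lemma.

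\textbf{Base case $j=1$.} Suppose $\sigma_1<\infty$. Since $\Rc^1$ is closed and $X$ is continuous, we have $(\sigma_1,X_{\sigma_1})\in\Rc^1$. Because $\sigma_1>0$ almost surely, we may restrict to paths with $\sigma_1>0$. Then $X^\eps_{\sigma_1}\le X_{\sigma_1}$, and the lower-barrier property gives $(\sigma_1,X^\eps_{\sigma_1})\in\Rc^1$. Hence $\sigma_1^\eps\le\sigma_1$.

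There is one subtlety here. The infimum in the definition of $\sigma_1$ is over $t>0$, so it may be attained only as a limit of times $t_n\downarrow\sigma_1$ at which $(t_n,X_{t_n})\in\Rc^1$. In that case we apply the same argument to each $t_n$: each gives $(t_n,X^\eps_{t_n})\in\Rc^1$, so $\sigma_1^\eps\le t_n$ for every $n$, and therefore $\sigma_1^\eps\le\sigma_1$. No closedness is needed for this step.

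\textbf{Inductive step.} Assume $\sigma_{j-1}^\eps\le\sigma_{j-1}$ and suppose $\sigma_j<\infty$. By definition of $\sigma_j$ there are times $t_n\downarrow\sigma_j$, possibly with $t_n=\sigma_j$, such that $t_n\ge\sigma_{j-1}$ and $(t_n,X_{t_n})\in\Rc^j$. The lower-barrier property together with $X^\eps_{t_n}\le X_{t_n}$ gives $(t_n,X^\eps_{t_n})\in\Rc^j$. Moreover $t_n\ge\sigma_{j-1}\ge\sigma_{j-1}^\eps$ by the inductive hypothesis, so $t_n$ is admissible in the infimum defining $\sigma_j^\eps$. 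Hence $\sigma_j^\eps\le t_n$ for all $n$, and so $\sigma_j^\eps\le\sigma_j$. If $\sigma_j=\infty$ the inequality is trivial.

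The only point needing care is the definition of the recursion at stage one, since $\sigma_1$ uses $t>0$ while the later stages use $t\ge\sigma_{j-1}$. This is handled uniformly by working with approximating admissible times rather than with the hitting point itself. The regions are fixed and do not depend on the path, so the monotone comparison propagates through all stages and the argument never uses the laws of the stopping times.
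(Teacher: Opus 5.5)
Your proof is correct and follows the same induction as the paper. It uses the pathwise bound $X^\eps_t\le X_t$, the lower-barrier property of each $\Rc^j$, and the hypothesis $\sigma_{j-1}^\eps\le\sigma_{j-1}$ to make a hitting time of $X$ admissible in the $X^\eps$-recursion. The only difference is that you work with admissible times $t_n\downarrow\sigma_j$ taken from the infimum rather than with the hitting point itself, so you need neither closedness nor $\sigma_1>0$, and you get the inequality on every path rather than almost surely.
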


\begin{proof}
Let $\sigma_j^\varepsilon$ denote the $j$-th stopping time in the
recursion driven by $X^\eps$. At the first stopping time,
closedness of $\Rc^1$ gives $(\sigma_1,X_{\sigma_1})\in\Rc^1$.
Since $X_{\sigma_1}^\eps\le X_{\sigma_1}$, we have
$(\sigma_1,X_{\sigma_1}^\varepsilon)\in\Rc^1$. Thus $\sigma_1^\eps\le\sigma_1$.

Suppose that $\sigma_{j-1}^\varepsilon\le\sigma_{j-1}$. Then $\sigma_j\ge\sigma_{j-1}^\varepsilon$, and
$(\sigma_j,X_{\sigma_j})\in\Rc^j$, with $X_{\sigma_j}^\varepsilon\le X_{\sigma_j}$.
Therefore $(\sigma_j,X_{\sigma_j}^\varepsilon)\in\Rc^j$, which implies $\sigma_j^\varepsilon\le\sigma_j$.
Induction gives
$\sigma^{\xi,\varepsilon}\le\sigma^\xi$.
\end{proof}

\begin{lemma}
\label{lem:weak-strict-lower-hit}
For every upper semicontinuous $b: \R_+ \to[-\infty, \infty]$, define
$$\tau_b := \inf\{t \geq \sigma^\xi: X_t \leq b(t)\}, \qquad \tau_b^< := \inf\{t \geq \sigma^\xi : X_t < b(t)\}.$$
Then
\(
    \tau_b=\tau_b^{<}
\)
almost surely.
\end{lemma}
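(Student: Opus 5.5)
Since $\{X_t<b(t)\}\subseteq\{X_t\le b(t)\}$, we always have $\tau_b\le\tau_b^<$, so the whole proof consists in showing $\tau_b^<\le\tau_b$ almost surely. We work on $\{\tau_b<\infty\}$ and set $\tau:=\tau_b$. Because $b$ is upper semicontinuous, the set $\Rc(b)$ is closed, and $X$ is continuous, so $X_\tau\le b(\tau)$. We then split into two cases, $\tau>\sigma^\xi$ and $\tau=\sigma^\xi$.

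\emph{Case $\tau>\sigma^\xi$.} Here $\tau$ is the first hitting time of $\Rc(b)$ by the post-$\sigma^\xi$ path, and the hit happens strictly after the entrance time. Fix $\delta>0$ and write $\tau^\delta:=\inf\{t\ge\sigma^\xi+\delta: X_t\le b(t)\}$. By the strong Markov property at $\tau^\delta$, the process $W_u:=X_{\tau^\delta+u}-X_{\tau^\delta}$ is a Brownian motion independent of $\mathcal F_{\tau^\delta}$. Brownian motion satisfies $\inf_{u\le h}W_u<0$ for every $h>0$ almost surely. We want to combine this with upper semicontinuity of $b$ to get $X_t<b(t)$ for some $t\in(\tau^\delta,\tau^\delta+h)$, but upper semicontinuity only bounds $b$ from above. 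So the bound goes in the wrong direction, and $b$ could drop immediately after $\tau^\delta$. In that case the path may touch $\Rc(b)$ at $\tau^\delta$ without ever entering the strict interior.

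The fix is to use the paper's shifted-process device. Set $X^\eps_t=X_t-\eps t$ and let $\sigma^{\xi,\eps}$ be as in Lemma~\ref{lem:shifted-iterated-gate}, with $\tau_b^\eps:=\inf\{t\ge\sigma^{\xi,\eps}:X^\eps_t\le b(t)\}$. Then $\sigma^{\xi,\eps}\le\sigma^\xi$ by Lemma~\ref{lem:shifted-iterated-gate}. Moreover, at any $t\ge\sigma^\xi$ with $X_t\le b(t)$ and $t>0$ we have $X^\eps_t<X_t\le b(t)$, a strict inequality. Hence $\tau_b^{\eps,<}\le\tau_b$ almost surely, where $\tau_b^{\eps,<}$ denotes the strict hitting time of the shifted path. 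The shifted path with $\eps>0$ sits strictly below the original one, so $\{X_t<b(t)\}$ is contained in the corresponding event for the shift. This gives the right monotonicity for a Girsanov comparison: the law of $X^\eps$ on $[0,T]$ is equivalent to Wiener measure, and converges to it in total variation as $\eps\downarrow0$. We would compare the laws of the pair $(\tau_b,\tau_b^<)$ under $X$ and under $X^\eps$, with the recursion regions $\Rc^1,\dots,\Rc^k$ kept fixed. From this we aim to show $\P(\tau_b<\tau_b^<\wedge T)\le \P(\text{event defined from }X^\eps)+o(1)$ and then let $\eps\to0$. The cleaner route is Anulova's: because the lower barrier is fixed, $\tau_b(X^\eps)$ is monotone in $\eps$, and its limit as $\eps\downarrow0$ equals $\tau_b^<(X)$ up to null sets. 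Since $X^\eps\to X$ in law (locally, in total variation), $\tau_b(X^\eps)$ has the same limit law as $\tau_b(X)$. Two ordered random times with equal laws coincide almost surely, which gives $\tau_b=\tau_b^<$.

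\emph{Case $\tau=\sigma^\xi$.} If $X_{\sigma^\xi}<b(\sigma^\xi)$, then $\tau_b^<=\sigma^\xi$ and we are done. Otherwise we need times $r_k\downarrow\sigma^\xi$ with $X_{r_k}<b(r_k)$. By Lemma~\ref{lem:right-discontinuity-countable} we have $\sigma^\xi\notin\mathcal J_b^+$ almost surely, so $\limsup_{s\downarrow\sigma^\xi}b(s)=b(\sigma^\xi)\ge X_{\sigma^\xi}$. There are therefore $s_k\downarrow\sigma^\xi$ with $b(s_k)\to b(\sigma^\xi)$. Brownian oscillation at the stopping time $\sigma^\xi$ (the strong Markov property and the law of the iterated logarithm) gives infinitely many excursions below $X_{\sigma^\xi}$ immediately after $\sigma^\xi$. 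The main obstacle is making these downward excursions coincide with times where $b$ is close to $b(\sigma^\xi)$, since $b$ need not be continuous there. I would try to handle this case by the same $\eps$-shift monotonicity and equal-law argument as in the first case, which avoids any path-by-path matching with $b$.
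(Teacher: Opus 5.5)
You have the right ingredients, but you assemble them so that the decisive step is missing. The pathwise fact you prove is the key one: with $\tau_b^{<,\varepsilon}:=\inf\{t\ge\sigma^{\xi,\varepsilon}:X^\varepsilon_t<b(t)\}$ one has $\tau_b^{<,\varepsilon}\le\tau_b\le\tau_b^<$, because $\tau_b\ge\sigma^\xi\ge\sigma^{\xi,\varepsilon}$ and $\tau_b>0$ turn a weak hit of $X$ into a strict hit of $X^\varepsilon$. In your ``cleaner route'' you then switch to the \emph{weak} time $\tau_b(X^\varepsilon)$ and assert that $\lim_{\varepsilon\downarrow0}\tau_b(X^\varepsilon)=\tau_b^<(X)$ a.s. That claim is unproved and presupposes the conclusion. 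Since $X^\varepsilon\le X$ and $\sigma^{\xi,\varepsilon}\le\sigma^\xi$, you have $\tau_b(X^\varepsilon)\le\tau_b(X)\le\tau_b^<(X)$ pathwise, so the limit identity alone would already give $\tau_b=\tau_b^<$ with no law argument. Establishing it directly runs into exactly the obstruction you identified yourself: an immediate drop of $b$ right after a touching time. The Girsanov step applied to $\tau_b(X^\varepsilon)$ only says that its law tends to that of $\tau_b(X)$, which carries no information about $\tau_b^<$. Your other formulation, with an unspecified event built from $X^\varepsilon$, is never made concrete, and the case $\tau_b=\sigma^\xi$ is left open.

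The missing step is to apply the Cameron--Martin comparison to the strict functional itself. Since $\sigma^{\xi,\varepsilon}$ comes from the same recursion with the same regions $\Rc^1,\dots,\Rc^k$, $\tau_b^{<,\varepsilon}$ is the same path functional evaluated on $X^\varepsilon$ as $\tau_b^<$ is on $X$. The law of $(X_t-\varepsilon t)_{0\le t\le T}$ has density $Z_T^\varepsilon=\exp(-\varepsilon X_T-\tfrac12\varepsilon^2T)$ with respect to Wiener measure, and $\E[(Z_T^\varepsilon-1)^2]=e^{\varepsilon^2T}-1\to0$. Hence $\P(\tau_b^{<,\varepsilon}>T)\to\P(\tau_b^<>T)$. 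The sandwich $\P(\tau_b^{<,\varepsilon}>T)\le\P(\tau_b>T)\le\P(\tau_b^<>T)$ then gives equality of laws, and two ordered random times with equal laws coincide a.s. This is the paper's proof. It treats $\tau_b>\sigma^\xi$ and $\tau_b=\sigma^\xi$ simultaneously, so your case split and the appeal to Lemma~\ref{lem:right-discontinuity-countable} are not needed here; that lemma enters only in Theorem~\ref{thm:delayed-lower-hit-stability}.
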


\begin{proof}
The inequality \(\tau_b\le\tau_b^{<}\) holds pathwise. For \(\eps>0\), define
\[
    \tau_b^{<,\varepsilon}
    :=
    \inf\{t\ge\sigma^{\xi,\varepsilon}:X_t^\varepsilon<b(t)\}.
\]
On \(\{\tau_b<\infty\}\), closedness of \(\Rc(b)\) and continuity of \(X\)
imply that $X_{\tau_b}\le b(\tau_b)$.
Since \(\tau_b\ge\sigma^\xi>0\) and $\sigma^{\xi, \eps} \leq \sigma^\xi$ by Lemma~\ref{lem:shifted-iterated-gate}, it follows that
\(
    X_{\tau_b}^\eps
    =
    X_{\tau_b}-\eps\tau_b
    <
    b(\tau_b)
\)
and \(\tau_b^{<,\eps}\leq\tau_b\) almost surely.

Now fix \(T<\infty\). On \(\mathcal F_T\), the law of
\((X_t-\varepsilon t)_{0\le t\le T}\) has density
$Z_T^\eps :=\exp\left(
        -\eps X_T-\frac12\eps^2T
    \right)$
with respect to the Wiener measure. Moreover, $\E[(Z_T^\eps-1)^2] = e^{\eps^2T}-1 \rightarrow 0$. Hence, the Cameron-Martin formula gives
\[
    \P\left(\tau_b^{<,\varepsilon}>T\right)
    {\longrightarrow}
    \P\left(\tau_b^{<}>T\right).
\]
Since, almost surely, $\tau_b^{<,\eps} \le \tau_b \le \tau_b^{<}$, we have
\[
    \P(\tau_b^{<,\eps}>T)
    \le
    \P(\tau_b>T)
    \le
    \P(\tau_b^{<}>T).
\]
Letting \(\eps\downarrow0\) gives
\(
    \P(\tau_b^{<}>T)
    \le
    \P(\tau_b>T)
    \le
    \P(\tau_b^{<}>T).
\)
Thus \(\tau_b\) and \(\tau_b^{<}\) have the same law. Together with
\(\tau_b\le\tau_b^{<}\), this yields \(\tau_b=\tau_b^{<}\) almost surely.
\end{proof}

\subsection{Stability under \texorpdfstring{\(\Gamma\)}{Gamma}-convergence}

\begin{theorem}
\label{thm:delayed-lower-hit-stability}
Let $b_n, b:\R_+ \to [-\infty, \infty]$ be upper semicontinuous and suppose that $b_n \xrightarrow{\Gamma^+}b$.
Then \(
    \tau_{b_n}\longrightarrow\tau_b
\)
almost surely in \([0,\infty]\).
\end{theorem}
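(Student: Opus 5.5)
We prove almost-sure convergence by establishing separately a pathwise lower bound $\liminf_n \tau_{b_n}\ge\tau_b$ and an upper bound $\limsup_n\tau_{b_n}\le\tau_b$. The upper bound holds on a full-measure event built from Lemma~\ref{lem:weak-strict-lower-hit} and Lemma~\ref{lem:right-discontinuity-countable}. Throughout, $\tau_{b_n}$ and $\tau_b$ are the delayed hitting times $\inf\{t\ge\sigma^\xi:X_t\le b_n(t)\}$ and $\inf\{t\ge\sigma^\xi:X_t\le b(t)\}$, with the same gate $\sigma^\xi$ for every $n$.

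\emph{Lower bound.} Fix a path and suppose $\liminf_n\tau_{b_n}<s<\tau_b$ for some finite $s$. Along a subsequence there are times $t_n\in[\sigma^\xi,s]$ with $X_{t_n}\le b_n(t_n)$. By compactness we may pass to a further subsequence with $t_n\to t^*\in[\sigma^\xi,s]$. Condition~1 of Definition~\ref{def:upper-Gamma-convergence} and continuity of $X$ give
\[
X_{t^*}=\lim_n X_{t_n}\le\limsup_n b_n(t_n)\le b(t^*).
\]
Hence $\tau_b\le t^*\le s$, which is a contradiction. Therefore $\liminf_n\tau_{b_n}\ge\tau_b$ for every path, including the case $\tau_b=\infty$.

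\emph{Upper bound.} We may assume $\tau_b<\infty$. By Lemma~\ref{lem:weak-strict-lower-hit}, almost surely $\tau_b=\tau_b^<$. Hence for every $\delta>0$ there is a time $r\in[\sigma^\xi,\tau_b+\delta)$ with $X_r<b(r)$.

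Suppose first that $r>\sigma^\xi$. Condition~2 gives times $r_n\to r$ with $b_n(r_n)\to b(r)$. Continuity of $X$ then gives $X_{r_n}<b_n(r_n)$ and $r_n>\sigma^\xi$ for large $n$. Consequently $\tau_{b_n}\le r_n<\tau_b+\delta$ eventually.

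The delicate case is $r=\sigma^\xi$, that is, $X_{\sigma^\xi}<b(\sigma^\xi)$, while $X_t\ge b(t)$ holds on a right neighbourhood of $\sigma^\xi$. The recovery sequence $r_n$ may fall below $\sigma^\xi$, where it does not count because of the gate. To exclude this, we use Lemma~\ref{lem:right-discontinuity-countable}, which gives $\P(\sigma^\xi\in\mathcal J_b^+)=0$. Off this null set we have $\limsup_{s\downarrow\sigma^\xi}b(s)=b(\sigma^\xi)$, so there exist $s_k\downarrow\sigma^\xi$ with $b(s_k)\to b(\sigma^\xi)>X_{\sigma^\xi}$. By continuity of $X$, $X_{s_k}<b(s_k)$ for large $k$, and $s_k>\sigma^\xi$. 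This reduces to the previous case with $r=s_k<\tau_b+\delta$.

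Letting $\delta\downarrow0$ yields $\limsup_n\tau_{b_n}\le\tau_b$ on the intersection of the full-measure events from the two lemmas. This intersection does not depend on $n$ or $\delta$: use rational $\delta$, and note that all exceptional sets depend only on $b$. Combined with the lower bound, this proves the theorem.

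\emph{Main obstacle.} The main obstacle is the entrance case $r=\sigma^\xi$. One also needs to check that the first-hit event $\{\tau_b=\tau_b^<\}$ of Lemma~\ref{lem:weak-strict-lower-hit} supplies strict-crossing points arbitrarily close after $\tau_b$, not merely at $\tau_b$. This follows directly from the definition of $\tau_b^<$ as an infimum.
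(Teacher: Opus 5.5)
Your proof is correct and follows essentially the same route as the paper: a pathwise $\liminf$ bound from condition~1 of upper-$\Gamma$-convergence and continuity of $X$, and a $\limsup$ bound on the full-measure event $\{\tau_b=\tau_b^<,\ \sigma^\xi\notin\mathcal J_b^+\}$ via recovery sequences, with the entrance case handled through the right $\limsup$ of $b$ at $\sigma^\xi$. The differences are only cosmetic: you use near-hitting times $t_n\in[\sigma^\xi,s]$ instead of the attained times $\tau_{b_n}$, and you pick strict-crossing points below $\tau_b+\delta$ directly instead of passing through $\tau_b^{-\delta}\downarrow\tau_b^<$.
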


\begin{proof}
By Lemmas~\ref{lem:right-discontinuity-countable}
and~\ref{lem:weak-strict-lower-hit}, it is enough to work on an event of
probability $1$ on which $X$ is continuous and
\[
    \sigma^\xi\notin\mathcal J_b^+,
    \qquad
    \tau_b=\tau_b^{<}.
\]
Set $L:= \liminf_{n \to \infty}\tau_{b_n}$. If $L<\infty$, choose a subsequence $(n_j)$ such that $\tau_{b_{n_j}} \rightarrow L$. The closedness of each barrier gives $X_{\tau_{b_{n_j}}} \leq b_{n_j}(\tau_{b_{n_j}})$, and hence $X_L \leq \limsup_{j\to \infty}b_{n_j}(\tau_{b_{n_j}}) \leq b(L)$. Since $L \geq \sigma^\xi$, it follows that $\tau_b \leq L$.

For the reverse bound, fix \(\delta>0\) and define
\[
    \tau^{-\delta}_b
    :=
    \inf\{t\ge\sigma^\xi:X_t\le b(t)-\delta\}.
\]
We next prove
\[
    \limsup_{n\to\infty}\tau_{b_n}\le\tau_b^{-\delta}.
\]
The assertion is clearly valid when \(\tau_b^{-\delta}=\infty\). If $\sigma^\xi<\tau_b^{-\delta}<\infty$,
we have
    $X_{\tau_b^{-\delta}}
    \le
    b(\tau_b^{-\delta})-\delta$.
Choose a sequence \(t_n\to\tau_b^{-\delta}\) such that
    $b_n(t_n)\to b(\tau_b^{-\delta})$.
For all sufficiently large \(n\), \(t_n\ge\sigma^\xi\) and \(X_{t_n}<b_n(t_n)\), so \(
    \tau_{b_n}\le t_n
    \)
    and
    \(\limsup_{n\to\infty}\tau_{b_n}\le\tau_b^{-\delta}\).

Suppose now that \(\tau_b^{-\delta}=\sigma^\xi\). Then $X_{\sigma^\xi}\le b(\sigma^\xi)-\delta$. The case \(b(\sigma^\xi)=-\infty\) is impossible.
If \(b(\sigma^\xi)<\infty\), the fact that
\(\sigma^\xi\notin\mathcal J_b^+\) gives
\(
    \limsup_{r\downarrow\sigma^\xi}b(r)=b(\sigma^\xi).
\)
If \(b(\sigma^\xi)=+\infty\), the same fact says that the right-hand limsup is $\infty$. In either case, continuity of \(X\) therefore implies that, for every \(\eta>0\), there
exists
    $r\in(\sigma^\xi,\sigma^\xi+\eta)$
    such that
    $X_r<b(r)$.
For this fixed \(r>\sigma^\xi\),
take a sequence \(r_n\to r\) with $b_n(r_n) \rightarrow b(r)$. Then
    $r_n\ge\sigma^\xi$ and
    $X_{r_n}<b_n(r_n)$ for sufficiently large $n$. Consequently,
    $$\limsup_{n \rightarrow \infty} \tau_{b_n} \leq r <\sigma^\xi + \eta.$$
Letting $\eta \downarrow 0$ proves the required bound when $\tau_b^{-\delta} = \sigma^\xi$.

Finally, note that
$$\{t \geq \sigma^\xi: X_t < b(t)\} = \bigcup_{k \geq 1}\{t \geq \sigma^\xi: X_t \leq b(t) - 1/k\}.$$
Hence $\tau_b^{-\delta} \downarrow \tau_b^<$ as $\delta \downarrow 0$. Therefore,
$$  \tau_b
    \leq
    \liminf_{n\rightarrow \infty} \tau_{b_n}
    \leq
    \limsup_{n\to\infty}\tau_{b_n}
    \le
    \tau_b^{<}
    =
    \tau_b.
$$
This completes the proof.
\end{proof}

\subsection{Compactness of the lower barriers}

Let $\overline{\T}:=[0,\infty]$ and $\overline{\R}:=[-\infty,\infty]$. Equip these spaces with the topologies induced, respectively, by
$$t \longmapsto \frac{t}{1+t}, \qquad x\longmapsto \frac{2}{\pi}\arctan x,$$
with the maps extended continuously to the endpoints.
Then $K:=\overline{\T}\times\overline{\R}$
is a compact metric space. Fix a compatible metric $d_K$ on $K$, and
let \(d_{\rm H}\) denote the associated Hausdorff metric on the
nonempty compact subsets of \(K\). For an upper semicontinuous \(b\), define its
compactified hypograph by
\[
    \mathsf H(b)
    :=
    \{(t,x)\in[0,\infty)\times\overline{\mathbb R}:x\le b(t)\}
    \cup
    \bigl(\{\infty\}\times\overline{\mathbb R}\bigr).
\]
\begin{proposition}
\label{prop:hypograph-Hausdorff-compactness}
For every sequence of upper semicontinuous functions
\(b_n:\mathbb R_+\to[-\infty,\infty]\), there is a subsequence, still
denoted by \(b_n\), and an upper semicontinuous function
\(b:\mathbb R_+\to[-\infty,\infty]\) such that
\[
    d_{\rm H}\bigl(\mathsf H(b_n),\mathsf H(b)\bigr)
    \longrightarrow0.
\]
In particular,
    $b_n\xrightarrow{\Gamma^+}b$.
\end{proposition}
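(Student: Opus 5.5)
The plan is to use compactness of the hyperspace of nonempty compact subsets of the compact metric space $K$ under $d_{\rm H}$ (Blaschke's selection theorem). Then I will show that any Hausdorff limit of compactified hypographs is again a compactified hypograph of an upper semicontinuous function. Finally, I will translate Hausdorff convergence into the two conditions of Definition~\ref{def:upper-Gamma-convergence}.

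\textbf{Compactness and form of the limit.} First, each $\mathsf H(b_n)$ is closed in $K$, hence compact. Upper semicontinuity of $b_n$ makes the hypograph closed in $[0,\infty)\times\overline{\R}$. Near $t=\infty$, any limit point lies in $\{\infty\}\times\overline{\R}$, which is included by construction. The set is also nonempty, since it contains $(t,-\infty)$ for every $t$. Since $K$ is a compact metric space, $(\mathcal K(K),d_{\rm H})$ is compact, so a subsequence satisfies $\mathsf H(b_n)\to H$ for some nonempty compact $H$.

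Next I identify $H$ through the Kuratowski description: $H$ is the set of limits of sequences $(t_n,x_n)\in\mathsf H(b_n)$, and also the set of cluster points of such sequences. Three properties follow.
\begin{itemize}
\item[(i)] $H\supseteq\{\infty\}\times\overline{\R}$, and $H\supseteq[0,\infty)\times\{-\infty\}$.
\item[(ii)] $H$ is downward closed in $x$ on each fibre. If $(t_n,x_n)\to(t,x)$ and $y<x$, then $(t_n,\min(x_n,y))\in\mathsf H(b_n)$ converges to $(t,y)$.
\item[(iii)] $H$ is closed.
\end{itemize}
Define $b(t):=\sup\{x:(t,x)\in H\}$ for $t<\infty$. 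By (i), (ii) and closedness of fibres, $H\cap(\{t\}\times\overline{\R})=\{t\}\times[-\infty,b(t)]$, so $H=\mathsf H(b)$. Closedness of $H$ gives upper semicontinuity of $b$: if $t_k\to t$, then $(t_k,b(t_k))\in H$, and every cluster point of $(t_k,b(t_k))$ lies in $H$, so $\limsup_k b(t_k)\le b(t)$.

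\textbf{$\Gamma^+$ convergence.} For condition 1, take $t_n\to t<\infty$ and choose a subsequence realizing $\limsup_n b_n(t_n)=:\ell$. The points $(t_n,b_n(t_n))$ lie in $\mathsf H(b_n)$ and converge in $K$ to $(t,\ell)$. Since every cluster point belongs to the Kuratowski upper limit, which equals $H$, we get $\ell\le b(t)$. For condition 2, fix $t$. Since $(t,b(t))\in H$ is a Kuratowski lower limit point, there exist $(t_n,x_n)\in\mathsf H(b_n)$ with $(t_n,x_n)\to(t,b(t))$ in $K$. Then $x_n\le b_n(t_n)$, so $\liminf_n b_n(t_n)\ge b(t)$. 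Combined with condition 1 along the same sequence $t_n$, this gives $b_n(t_n)\to b(t)$.

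\textbf{Main obstacle.} The delicate point is the equivalence between $d_{\rm H}$-convergence and Kuratowski convergence, and its bookkeeping at the compactification points $\pm\infty$ and $t=\infty$. Convergence to $(t,b(t))$ must be read in the arctan metric, which handles $b(t)=\pm\infty$ uniformly. We also need $t_n<\infty$ eventually when $t<\infty$, which holds because convergence in the $t/(1+t)$ metric to a finite $t$ forces $t_n$ to be finite and close to $t$. Since $K$ is compact metric, the equivalence of Hausdorff and Kuratowski convergence is standard, and I will invoke it directly.
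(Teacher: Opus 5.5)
Your proposal is correct and follows essentially the same route as the paper. Both arguments use compactness of the hyperspace of $K$ under $d_{\rm H}$ to extract a limit $H$, show that $H$ is downward closed with finite-time fibres $[-\infty,b(t)]$ (so $H=\mathsf H(b)$ with $b$ upper semicontinuous), and read the two $\Gamma^+$ conditions off the upper and lower Kuratowski limits; your observation that $t_n<\infty$ eventually when $t<\infty$ makes explicit a point the paper leaves implicit when it writes $x_n\le b_n(t_n)$.
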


\begin{proof}
Each \(\mathsf H(b_n)\) is a nonempty compact subset of \(K\). The
space of nonempty compact subsets of a compact metric space is compact
under the Hausdorff metric. Hence, after passing to a subsequence,
\[
    \mathsf H(b_n)\longrightarrow H
\]
for some nonempty compact \(H\subset K\).

The limit \(H\) contains
\(\{\infty\}\times\overline{\mathbb R}\), and every finite-time section
contains \(-\infty\). It is also downward closed in the second coordinate.
Indeed, if \((t,x)\in H\) and \(y<x\), let
\((t_n,x_n)\in\mathsf H(b_n)\) converge to \((t,x)\). For
\(n\) large enough, we have \(y\le x_n\) and hence \((t_n,y)\in\mathsf H(b_n)\). Passing to the
limit, we have \((t,y)\in H\).

For each finite \(t\), the section
    $H_t:=\{x\in\overline{\mathbb R}:(t,x)\in H\}$
is therefore a nonempty compact interval. There is a unique
\(b(t)\in[-\infty,\infty]\) such that
$
    H_t=[-\infty,b(t)]$.
Thus \(H=\mathsf H(b)\), and closedness of \(H\) is equivalent to upper
semicontinuity of \(b\).

It remains to verify upper-\(\Gamma\)-convergence. Fix any $t_n\to t$. Passing to a subsequence, assume $b_n(t_n)\to y:=\limsup_{n \to \infty} b_n(t_n)$ in $\overline{\R}$. As
$(t_n,b_n(t_n))\in\mathsf H(b_n)$, the convergence implies \((t,y)\in\mathsf H(b)\), and hence
\(y\le b(t)\).

Conversely, fix \(t\geq0\). Since \((t,b(t))\in\mathsf H(b)\), Hausdorff convergence implies
$(t_n,x_n)\in\mathsf H(b_n)$ and $(t_n,x_n)\longrightarrow(t,b(t))$.
As \(x_n\le b_n(t_n)\), we have $b(t) \le \liminf_{n \to \infty} b_n(t_n)$. Combining this with the preceding limsup inequality, we conclude that $b_n(t_n)\rightarrow b(t)$.
\end{proof}

\section{Technical Lemmas}

\subsection{Stopping problem reduction}
\label{app:tc-grid-reduction-proof}

\begin{lemma}
\label{lem:tc-grid-rounding}
Fix $t\in[t_0,r_\xi^\beta)$, $x\in\R$, and
$\tau\in\TT^t$. Let
$\mathcal D_t=\{d_0<\cdots<d_q\}$, $d_q=t-t_0$,
and define
\[
\bar\tau
:=
\begin{cases}
\min\{d\in\mathcal D_t:\tau\le d\},
    & \tau\le d_q,\\[2mm]
d_q, & \tau>d_q.
\end{cases}
\]
Then $\bar\tau\in\TT_t^{\Lambda}$ and
\[
\E^x\!\left[
v^\xi(t-\bar\tau,Y_{\bar\tau})
+w^\beta(t-\bar\tau)\mathbf 1_{\{\bar\tau<t\}}
\right]
\le
\E^x\!\left[
v^\xi(t-\tau,Y_\tau)
+w^\beta(t-\tau)\mathbf 1_{\{\tau<t\}}
\right].
\]
\end{lemma}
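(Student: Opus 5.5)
Fix $t$, $x$, $\tau$, and write $d_{-1}:=-\infty$ for convenience. The plan is to decompose according to which grid cell $\tau$ falls in: on $\{d_{j-1}<\tau\le d_j\}$ we have $\bar\tau=d_j$, and on $\{\tau>d_q\}$ we have $\bar\tau=d_q$. Since $\mathcal D_t$ is finite and $\bar\tau\ge\tau$ on the first family of events, $\bar\tau$ is an $\F^Y$-stopping time: $\{\bar\tau\le d_j\}=\{\tau\le d_j\}$ for $j<q$, and $\{\bar\tau\le d_q\}=\Omega$. Hence $\bar\tau\in\TT^\Lambda_t$. Calendar time $t-s$ lies in $(t_{k(t)-j},t_{k(t)-j+1}]$ for $s\in[d_{j-1},d_j)$, so the payoff should be compared separately on each cell.

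\textbf{Time component.} On $\{d_{j-1}<\tau\le d_j\}$ the calendar time $t-\tau$ lies in $[t-d_j,\,t-d_{j-1})=[t_{i},t_{i+1})$ for the appropriate $i$, with $t_{m+1}=r_\xi^\beta$ if needed. On such an interval $g^\beta$ is constant, because $\beta$ charges no points of $(t_i,t_{i+1})$ besides those in $\Lambda$, while $g^\xi$ is non-increasing. Hence $w^\beta(t-\tau)\ge w^\beta(t_i)=w^\beta(t-\bar\tau)$ there. On $\{\tau>d_q\}$ we have $t-\tau<t_0$, where $w^\beta=0$, and $\bar\tau=d_q$ gives $w^\beta(t_0)=0$ as well, so the two terms agree. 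The indicator $\mathbf 1_{\{\tau<t\}}$ needs one check: $\tau=t$ can only occur when $t-\tau=0\le t_0$, in which case $w^\beta$ vanishes anyway. Therefore
\[
w^\beta(t-\bar\tau)\mathbf 1_{\{\bar\tau<t\}}\le w^\beta(t-\tau)\mathbf 1_{\{\tau<t\}}
\]
pathwise.

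\textbf{Space component.} It remains to show $\E^x[v^\xi(t-\bar\tau,Y_{\bar\tau})]\le \E^x[v^\xi(t-\tau,Y_\tau)]$. This is the opposite direction to the submartingale property of Lemma~\ref{lem::V-submartingale}, since $\bar\tau\ge\tau$, so optional sampling cannot be used directly; this is the main obstacle. The plan is to exploit the fact that on the open cells there is no stopping-relevant time variation. By the switching computation in Lemma~\ref{lem::V-submartingale}, for $\tau\le s\le \bar\tau$ the defect is
\[
\E[V_s\mid\mathcal F^Y_{\tau}]-V_\tau=\P(t-s<\sigma^\xi\le t-\tau,\ X_{t-\tau}>Y_\tau\mid Y_\tau)\ge0,
\]
and it is strictly positive in general, so pure optional sampling fails. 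I would therefore compensate using the $w^\beta$ slack. On the cell, $w^\beta(t-\tau)-w^\beta(t-\bar\tau)=\xi((t-\bar\tau,t-\tau])$, because $g^\beta$ is constant there. The defect is at most $\P(t-\bar\tau<\sigma^\xi\le t-\tau)=\xi((t-\bar\tau,t-\tau])$. So instead of proving the two inequalities separately, I would prove the combined inequality cell by cell: condition on $\mathcal F^Y_\tau$, use the strong Markov property together with Lemma~\ref{lem::BM-switch} to write $\E[v^\xi(t-\bar\tau,Y_{\bar\tau})\mid\mathcal F^Y_\tau]-v^\xi(t-\tau,Y_\tau)$ as the probability above, and bound it by the $w^\beta$ gap. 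The case $\tau>d_q$ needs separate treatment: there $w^\beta=0$ on both sides, but $\xi$ may charge $(t-\tau,t_0]$ while $\beta$ does not, so $\xi((t_0-\cdot,t_0])=\beta(\cdot)=0$ and the defect vanishes. Summing over cells and taking expectations gives the claim.

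The delicate points are therefore the identification of the defect term via switching, and the bookkeeping showing that the jump of $g^\beta$ at grid points lands on the correct side, which holds because $\bar\tau$ rounds up in elapsed time, that is, down in calendar time, to $t_i$ where the atom is included in $g^\beta(t_i)$.
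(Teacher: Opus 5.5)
Your argument on the cells $A_j=\{d_{j-1}<\tau\le d_j\}$, $j=0,\dots,q$, is correct and is the paper's argument in different form. The paper notes that on such a cell $g^\beta(t-s)=g^\beta(t-d_j)$. Hence $\Psi_s:=v^\xi(t-s,Y_s)+w^\beta(t-s)\mathbf 1_{\{s<t\}}=g^\beta(t-d_j)-\overline v^\xi(t-s,Y_s)$ is a supermartingale there by Corollary~\ref{cor::subm-lower-tilde}. Your ``defect minus $w^\beta$-gap'' equals $-\P(t-\bar\tau<\sigma^\xi\le t-\tau,\ X_{t-\tau}\le y)|_{y=Y_\tau}$, which is exactly that supermartingale increment.

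Conditioning directly on $\mathcal F^Y_\tau$ is legitimate, because $\bar\tau$ is a deterministic function of $\tau$ and hence $\mathcal F^Y_\tau$-measurable. It also spares you the paper's approximation $\theta_\varepsilon=(\tau\vee(d_{j-1}+\varepsilon))\wedge d_j$, which the paper needs because the supermartingale property is only used on closed subintervals of a half-open cell. Once you bound the combined payoff this way, your separate pathwise ``time component'' is redundant.

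A small bookkeeping point: in your first paragraph the cells should be $s\in(d_{j-1},d_j]$, with calendar time in $[t_{k(t)-j},t_{k(t)-j+1})$. What makes the rounding work is right-continuity of $g^\beta$: $g^\beta(t_i)$ is the post-jump value, so $t-\tau$ and $t-\bar\tau$ lie in the same constancy interval.

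The case $A_*=\{\tau>d_q\}$, however, is argued incorrectly. There $\bar\tau=d_q<\tau$, so conditioning on $\mathcal F^Y_\tau$ and looking forward to $\bar\tau$ does not apply. Your claim that the discrepancy vanishes because $\xi$ does not charge $(t-\tau,t_0]$ is false. The fact that $w^\beta=0$ on $[0,t_0]$ only says that $\xi$ and $\beta$ agree there; when $t_0>0$ both may charge $(0,t_0)$. For example, take $\tau\equiv s\in(d_q,t]$ with $\xi((t-s,t_0])>0$. Then $\E^x[v^\xi(t-\tau,Y_\tau)\mathbf 1_{A_*}]-\E^x[v^\xi(t_0,Y_{d_q})\mathbf 1_{A_*}]$ is strictly positive.

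No vanishing is needed, though, because the submartingale direction is favourable here. On $[d_q,t]$ one has $\Psi_s=v^\xi(t-s,Y_s)$, since $w^\beta=0$ on $[0,t_0]$. Moreover $A_*\in\mathcal F^Y_{d_q}$ and $\bar\tau=d_q\le\tau$ on $A_*$. Optional sampling for the submartingale of Lemma~\ref{lem::V-submartingale} therefore gives $\E^x[\Psi_{\bar\tau}\mathbf 1_{A_*}]\le\E^x[\Psi_\tau\mathbf 1_{A_*}]$ directly. This is how the paper treats $A_*$, and with this replacement your proof is complete.
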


\begin{proof}
For \(0\le s\le t\), define
$\Psi_s
:=
v^\xi(t-s,Y_s)
+
w^\beta(t-s)\mathbf 1_{\{s<t\}}$.
The process \(\Psi\) is bounded.

We first verify that \(\bar\tau\) is admissible. For
\(j=0,\ldots,q-1\),
$\{\bar\tau\le d_j\}=\{\tau\le d_j\}$,
hence \(\{\bar\tau\le d_q\}=\Omega\). As \(\bar\tau\) takes
values in \(\mathcal D_t\), it follows that
\(
\bar\tau\in\TT_t^{\Lambda}.
\)

We make the following partition of the sample space:
\[
A_0:=\{\tau\le d_0\},\qquad
A_j:=\{d_{j-1}<\tau\le d_j\},
\quad j=1,\ldots,q,
\]
and
$A_*:=\{\tau>d_q\}$.

Fix \(j\in\{1,\ldots,q\}\). The times
\(t-d_j\) and \(t-d_{j-1}\) are consecutive points of \(\Lambda\).
Hence \(g^\beta\) is constant on
\([t-d_j,t-d_{j-1})\), and
\[
\Psi_s
=
g^\beta(t-d_j)
-
\overline v^\xi(t-s,Y_s),
\qquad
\mbox{ for } s\in (d_{j-1}, d_j].
\]
Here the identity at \(s=t\) follows from
\(w^\beta(0)=0\). By
Corollary~\ref{cor::subm-lower-tilde}, \(\Psi\) is a
bounded supermartingale on every closed subinterval of
\((d_{j-1},d_j]\).

For \(0<\varepsilon<d_j-d_{j-1}\), define
$\theta_\varepsilon
:=
\bigl(\tau\vee(d_{j-1}+\varepsilon)\bigr)\wedge d_j$.
Then \(\theta_\varepsilon\) is a stopping time taking values in
\([d_{j-1}+\varepsilon,d_j]\). Moreover,
\(A_j\in\mathcal F^Y_{\theta_\varepsilon}\). Indeed,
\[
A_j\cap\{\theta_\varepsilon\le r\}
=
\begin{cases}
\varnothing,
    &r<d_{j-1}+\varepsilon,\\
\{d_{j-1}<\tau\le r\},
    &d_{j-1}+\varepsilon\le r<d_j,\\
A_j,
    &r\ge d_j.
\end{cases}
\]
Since \(\bar\tau=d_j\) on \(A_j\), by the optional stopping theorem, we have
\[
\E^x\!\left[
    \Psi_{\bar\tau}\mathbf 1_{A_j}
\right]
=
\E^x\!\left[
    \Psi_{d_j}\mathbf 1_{A_j}
\right]
\le
\E^x\!\left[
    \Psi_{\theta_\varepsilon}\mathbf 1_{A_j}
\right].
\]
For every path in \(A_j\), one has
\(\theta_\varepsilon=\tau\) for all sufficiently small
\(\varepsilon\). Since \(\Psi\) is bounded, letting
\(\varepsilon\downarrow0\) yields
\[
\E^x\!\left[
    \Psi_{\bar\tau}\mathbf 1_{A_j}
\right]
\le
\E^x\!\left[
    \Psi_\tau\mathbf 1_{A_j}
\right].
\]

On \(A_0\), one has \(\bar\tau=d_0\). If \(d_0=0\), then
\(\tau=\bar\tau=0\) on \(A_0\). Suppose that \(d_0>0\).
The function \(g^\beta\) is constant on \([t-d_0,t]\), so
Corollary~\ref{cor::subm-lower-tilde} shows that \(\Psi\) is a
bounded supermartingale on \([0,d_0]\). As
$
A_0\in\mathcal F^Y_{\tau\wedge d_0}$
and
$\tau\wedge d_0=\tau$
on $A_0$,
the optional stopping theorem implies that
\[
\E^x\!\left[
    \Psi_{\bar\tau}\mathbf 1_{A_0}
\right]
=
\E^x\!\left[
    \Psi_{d_0}\mathbf 1_{A_0}
\right]
\le
\E^x\!\left[
    \Psi_\tau\mathbf 1_{A_0}
\right].
\]

Finally, recall that \(d_q=t-t_0\) and
\(w^\beta=0\) on \([0,t_0]\). Thus, for \(d_q\le s\le t\),
$
\Psi_s
=
v^\xi(t-s,Y_s)$.
By Lemma~\ref{lem::V-submartingale}, \(\Psi\) is a bounded
submartingale on \([d_q,t]\). Since
\[
A_*=\{\tau>d_q\}\in\mathcal F^Y_{d_q}
\quad\text{and}\quad
\tau\vee d_q=\tau
\quad\text{on }A_*,
\]
the optional stopping theorem implies that
\[
\E^x\!\left[
    \Psi_{\bar\tau}\mathbf 1_{A_*}
\right]
=
\E^x\!\left[
    \Psi_{d_q}\mathbf 1_{A_*}
\right]
\le
\E^x\!\left[
    \Psi_\tau\mathbf 1_{A_*}
\right].
\]
Summing the above inequalities over
\(A_0,A_1,\ldots,A_q,A_*\), we obtain
\[
\E^x[\Psi_{\bar\tau}]
\le
\E^x[\Psi_\tau].
\]
By the definition of \(\Psi\), this is exactly the assertion of the lemma.
\end{proof}
\subsection{Technical estimates for Section~\ref{sec:opti}}
\label{app:optimality-estimates}
We provide the following standard estimates for completeness.
\begin{theorem}
\label{thm::polygrowth}
Suppose that \(p\ge2\), \(F\in C^2(\mathbb R)\), \(F''\) is
non-decreasing, and for all $x \in \R$,
       $ |F(x)|
        \le
        C\left(1+|x|^p\right)
       $.
Then there exists a constant \(C_F<\infty\) such that for all $x \in \R$,
$$
        |F''(x)|
        \le
        C_F\left(1+|x|^{p-2}\right),\qquad
        |F'(x)|
        \le
        C_F\left(1+|x|^{p-1}\right).
$$
\end{theorem}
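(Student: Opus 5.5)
The plan is to exploit convexity of \(F''\) in the following sense: since \(F''\) is non-decreasing, \(F'\) is convex and \(F\) is ``convex-of-convex''. The proof will work one sign of \(x\) at a time, treating \(x\to+\infty\) first and \(x\to-\infty\) by applying the same argument to \(x\mapsto F(-x)\). After this reflection the second derivative becomes \(F''(-x)\), which is non-increasing rather than non-decreasing, so the two tails are handled with the monotonicity running in opposite directions. On any compact set all quantities are bounded by continuity, so only growth at infinity matters.

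The key step is a Taylor bound with integral remainder. For \(x\ge1\) and \(h=x\), write
\[
F(x+h)-2F(x)+F(x-h)=\int_0^h (h-s)\bigl(F''(x+s)+F''(x-s)\bigr)\,\mathrm ds .
\]
First consider the case where \(F''\) is eventually nonnegative on the right. Monotonicity gives \(F''(x+s)\ge F''(x)\) for \(s\ge0\). To control the term \(F''(x-s)\) from below, use the one-sided version instead:
\[
F(2x)-F(x)-xF'(x)=\int_0^x (x-s)F''(x+s)\,\mathrm ds\ \ge\ \tfrac{x^2}{2}F''(x).
\]
Combined with a bound on \(F'(x)\), this yields \(F''(x)\le C(1+x^{p-2})\).

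The bound on \(F'\) comes from monotonicity as well. Write \(F'(x)-F'(0)=\int_0^xF''\). Then mean-value estimates of the form \(F(x)-F(x/2)=\tfrac{x}{2}F'(\eta)\) give two-sided control of \(F'\) at some point of each interval \([x/2,x]\) by \(Cx^{p-1}\). Since \(F'\) is monotone wherever \(F''\) has constant sign, this pointwise control propagates to the whole interval. For a lower bound on \(F''\) on the right, observe that \(F''\) is non-decreasing, hence bounded below by \(F''(0)\) for \(x\ge0\). The upper bound follows from the one-sided inequality above. On the left, \(F''\) is bounded above by \(F''(0)\), and a lower bound comes from the symmetric inequality \(F(-2x)-F(-x)+xF'(-x)\ge \tfrac{x^2}{2}\inf_{[-2x,-x]}F''=\tfrac{x^2}{2}F''(-2x)\). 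Rescaling \(x\mapsto x/2\) then gives \(|F''(x)|\le C(1+|x|^{p-2})\).

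The main obstacle is the circularity between the bounds on \(F'\) and \(F''\). To break it, I would first bound \(F'\) at well-chosen points using only \(F\), via the difference quotients \((F(2x)-F(x))/x\), which are \(O(x^{p-1})\). Then I would use monotonicity or convexity of \(F'\) on each half-line to extend these to all points. Convexity of \(F'\) on \(\mathbb R\) means \(F'\) is controlled above by chords and below by tangent lines, which reduces everything to bounds at finitely many scaled points. Once \(F'\) is bounded, the displayed second-order inequalities close the estimate for \(F''\).
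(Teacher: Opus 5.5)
Your right-tail argument works, but the left-tail lower bound fails as written. For $x\ge1$, the expansion you use is
\[
F(-2x)-F(-x)+xF'(-x)=x^{2}\int_{0}^{1}(1-u)\,F''(-x-ux)\,\mathrm{d}u .
\]
Since $-x-ux\in[-2x,-x]$ and $F''$ is non-decreasing, this quantity is sandwiched as
\[
\tfrac{x^{2}}{2}F''(-2x)\;\le\;F(-2x)-F(-x)+xF'(-x)\;\le\;\tfrac{x^{2}}{2}F''(-x).
\]
You invoke the left-hand inequality, but it bounds $F''(-2x)$ from \emph{above}. You already have that from $F''\le F''(0)$ on $(-\infty,0]$, and rescaling $x\mapsto x/2$ does not change the direction. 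So the one nontrivial estimate on the left, $F''(-x)\ge -C(1+x^{p-2})$, is never obtained. (Think of $F(x)=x^{3}$, where $F''\to-\infty$ at $-\infty$.)

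The repair is to use the right-hand inequality instead:
\[
F''(-x)\;\ge\;\tfrac{2}{x^{2}}\bigl(F(-2x)-F(-x)+xF'(-x)\bigr),
\]
together with a lower bound $F'(-x)\ge -C(1+x^{p-1})$.

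The circularity you identify as the main obstacle is not really there. Both one-sided Taylor estimates need only a \emph{lower} bound on $F'$: the forward expansion on the right gives the upper bound on $F''$, and the backward expansion on the left gives the lower bound. Since $F'$ is convex, its tangent line at $0$ gives $F'(y)\ge F'(0)+F''(0)\,y$ for all $y\in\mathbb{R}$, which suffices because $p\ge2$. So you can drop the mean-value/chord step, the case split on the eventual sign of $F''$, and the discarded centered difference. The bound on $|F'|$ then follows afterwards by integrating $F''$.

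For comparison, the paper bypasses $F'$ altogether. With $h=1+|x|$, monotonicity of $F''$ gives
\[
F(x)-2F(x-h)+F(x-2h)\;\le\;h^{2}F''(x)\;\le\;F(x+2h)-2F(x+h)+F(x),
\]
and every argument of $F$ here is bounded by $3h$ in absolute value. This yields both sides of the $F''$ bound at every $x$ in one step, and $F'$ is then obtained by integration. Your corrected argument is the first-order Taylor analogue of this. It pays for the missing second difference with the free tangent-line bound on $F'$.
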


\begin{proof}
    For every $h > 0$, the monotonicity of $F''$ gives
    $$\frac{F(x) - 2F(x-h) + F(x-2h)}{h^2} \leq F''(x) \leq \frac{F(x+2h) - 2F(x+h) + F(x)}{h^2}.$$
    Choose $h = 1+|x|$. Every argument of $F$ in these finite differences has absolute value at most $3h$. Hence the polynomial-growth assumption yields
    $$|F''(x)| \leq C_1\frac{1+h^p}{h^2} \leq C_2(1+|x|^{p-2}), \quad x \in \R.$$
    Consequently,
    $$|F'(x)| \leq |F'(0)| + \int^{\max\{0, x\}}_{\min\{0, x\}}|F''(y)\d y \le C_3(1+|x|^{p-1}).$$
    Enlarging the constant proves both assertions.
\end{proof}

\begin{lemma}\label{lem:critical-bdg}
Let $\theta$ be an almost surely finite stopping time, and let
    $M_\theta:=\sup_{0\le s\le\theta}|X_s|$.
If \(a,b\ge0\), \(r\ge a+b/2\), \(r>0\), and
\(\E[\theta^r]<\infty\), then
\[
    \E[\theta^aM_\theta^b]
    \le
    C_{a,b}\bigl(1+\E[\theta^r]\bigr).
\]
\end{lemma}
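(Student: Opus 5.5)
The plan is to split $\theta^aM_\theta^b$ with Young's inequality and then control the running maximum through the Burkholder--Davis--Gundy inequality.

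First, the trivial cases. If $b=0$, then $\theta^a\le 1+\theta^r$ because $a\le r$, and we are done. If $a=0$, BDG gives $\E[M_\theta^b]\le C_b\E[\theta^{b/2}]$; this is valid for every $b>0$ and any finite stopping time, with the unbounded case handled by applying BDG at $\theta\wedge n$ and letting $n\to\infty$ by monotone convergence. Since $b/2\le r$, we then have $\theta^{b/2}\le 1+\theta^r$.

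Now take $a,b>0$ and choose conjugate exponents $p=(a+b/2)/a$ and $q=(a+b/2)/(b/2)$. Young's inequality, applied to $\theta^a\cdot M_\theta^b$, gives
\[
\theta^aM_\theta^b\le \tfrac1p\,\theta^{ap}+\tfrac1q\,M_\theta^{bq}
=\tfrac1p\,\theta^{a+b/2}+\tfrac1q\,M_\theta^{2a+b}.
\]
For the second term, BDG with exponent $2a+b>0$ yields $\E[M_\theta^{2a+b}]\le C\,\E[\theta^{a+b/2}]$. Combining the two terms, we get $\E[\theta^aM_\theta^b]\le C\,\E[\theta^{a+b/2}]$. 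Finally, since $a+b/2\le r$, we have $\theta^{a+b/2}\le 1+\theta^r$, which gives the stated bound $C_{a,b}(1+\E[\theta^r])$.

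The only delicate point is applying BDG with the random time $\theta$, which may be unbounded and whose exponent $2a+b$ may be less than $1$. BDG holds for all positive exponents for continuous local martingales, so the small-exponent case causes no difficulty. The unbounded case is handled by localizing at $\theta\wedge n$ and passing to the limit by monotone convergence on both sides: $M_{\theta\wedge n}\uparrow M_\theta$ and $\theta\wedge n\uparrow\theta$.
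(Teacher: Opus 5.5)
Your proof is correct and is essentially the paper's argument. The paper applies H\"older's inequality with the conjugate exponents $r/a$ and $2r/b$ (at $r=a+b/2$), then BDG, and localizes the product at $\theta\wedge N$; you use the pointwise Young inequality with the same exponents, so you only need to localize inside the BDG step, and your separate handling of $a=0$, of $b=0$, and of $r>a+b/2$ via $\theta^{a+b/2}\le 1+\theta^r$ matches the paper.
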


\begin{proof}
The case \(a=b=0\) is immediate. Suppose first that
\(r=a+b/2>0\). When \(a,b>0\), H\"older's inequality and the
Burkholder--Davis--Gundy inequality give, for $N \in \N$,
\[
    \E[(\theta \wedge N)^aM_{\theta \wedge N}^b]
    \le
    \bigl(\E[(\theta\wedge N)^r]\bigr)^{a/r}
    \bigl(\E[M_{\theta\wedge N}^{2r}]\bigr)^{b/(2r)}
    \le
    C_{a,b}\E[(\theta\wedge N)^r].
\]
Letting $N \rightarrow \infty$ gives the desired estimate by monotone convergence. If $a = 0$, the same conclusion follows directly from the Burkholder--Davis--Gundy inequality, and if $b=0$, the conclusion is exactly the assumed moment bound.
Finally, if \(r>a+b/2\), the conclusion follows from
\(\E[\theta^{a+b/2}]\le1+\E[\theta^r]\).
\end{proof}

\end{document}